\documentclass[11pt,reqno]{amsart}
\usepackage[left=33mm,right=33mm,top=30mm,bottom=32mm]{geometry}
\usepackage{mathtools,amssymb,amsthm,mathrsfs,color,float}
\usepackage{paralist}
\usepackage{stackengine}
\usepackage{centernot}
\usepackage{txfonts}
\usepackage{tabularx}
\usepackage{longtable}
\usepackage[colorlinks,
linkcolor=red,
anchorcolor=green,
citecolor=blue, 
]{hyperref}

\usepackage[T1]{fontenc}
\usepackage[utf8]{inputenc}
\usepackage{booktabs}

\usepackage{calc}
\definecolor{bleu1}{RGB}{0,57,128}
\def\bleu1{\color{bleu1}}

\usepackage{etoolbox}
\patchcmd{\section}{\normalfont}{\normalfont \bleu1}{}{}
\patchcmd{\subsection}{\normalfont}{\normalfont \bleu1}{}{}
\patchcmd{\subsubsection}{\normalfont}{\normalfont \bleu1}{}{}

\newtheorem{The}{\bleu1 Theorem}[section]
\newtheorem{Pro}{\bleu1 Proposition}[section]
\newtheorem{Lem}{\bleu1 Lemma}[section]
\newtheorem{Cor}{\bleu1 Corollary}[section]

\theoremstyle{definition}
\newtheorem{defn}{\bleu1 Definition}[section]

\newtheorem{Rem}{\bleu1 Remark}[section]
\newtheorem{Ex}{\bleu1 Example}[section]

\newcommand{\T}{\mathbb{T}}
\newcommand{\R}{\mathbb{R}}

\newcommand{\N}{\mathbb{N}}

\newcommand{\SCL}{\mathrm{SCL\,}}

\newcommand{\reg}{\operatorname{Reg}}
\newcommand{\cut}{\operatorname{Cut}}
\newcommand{\sing}{\operatorname{Sing}}

\makeatletter
\@namedef{subjclassname@2020}{\textup{2020} Mathematics Subject Classification}
\makeatother

\title[On the cut locus of Hamilton--Jacobi equations I]{On the cut locus of Hamilton--Jacobi equations I: structure and propagation via the touching approach}
\author{Piermarco Cannarsa, Wei Cheng, Jiahui Hong \and Wenxue Wei}
\address[Piermarco Cannarsa]{Dipartimento di Matematica, Universit\`a di Roma ``Tor Vergata'', Via della Ricerca Scientifica 1, 00133 Roma, Italy}
\email{cannarsa@mat.uniroma2.it}
\address[Wei Cheng]{School of Mathematics, Nanjing University, Nanjing 210093, China}
\email{chengwei@nju.edu.cn}
\address[Jiahui Hong]{School of Mathematics, Nanjing University of Aeronautics and Astronautics, Nanjing 211106, China}
\email{hongjiahui@nuaa.edu.cn}
\address[Wenxue Wei]{School of Mathematics, Nanjing University, Nanjing 210093, China}
\email{wwx3708@gmail.com}
\keywords{cut locus, Hamilton--Jacobi equations, propagation of singularities, Alexandrov's theorem, matrix Riccati equation}
\subjclass[2020]{35D40, 35F21, 37J51, 53C22}

\begin{document}

\begin{abstract}
For a semiconcave function with linear modulus, we introduce the cut locus through a touching approach and prove that it coincides with the variational cut locus defined via the Lax--Oleinik semigroup, independently of the Hamiltonian. Cut points are characterized by the emptiness of the proximal subdifferential of $\phi$, which we use as the analytic criterion behind the touching definition. We introduce the degree of regularity $R_\phi$, which measures the $C^{1,1}$ deviation of $\phi$, and prove the quantitative relation $R_\phi(x)\sim 1/\tau_{\phi,H}(x)$ with the cut time function, valid pointwise up to explicit truncation constants. From this relation we derive a separation estimate for calibrated curves, showing that no conjugate points occur before the cut locus. This estimate supplies the tools for the propagation results. Cut points propagate globally along generalized characteristics for the evolutionary Hamilton--Jacobi equation, and Alexandrov points propagate forward along calibrated curves, with the second derivative satisfying a matrix Riccati equation. We also show that the cut locus is a Lebesgue null set and give a streamlined proof of Alexandrov's theorem. Finally, we give necessary and sufficient conditions for the cut locus of a weak KAM solution to be closed, in terms of the cut time function, the $C^{1,1}$ support, and the degree of regularity.  
\end{abstract}

\maketitle


\section{Introduction}

This is the first of two related papers on the cut locus of viscosity solutions of Hamilton--Jacobi equations. The second paper will concern the quantitative regularity and stability of the cut time function and of the cut locus under $C^{2,\alpha}$ initial data.

\subsection{Hamilton--Jacobi equations and the cut locus}

Let $M$ be a compact connected smooth manifold without boundary, and let $TM$ and $T^*M$ denote its tangent and cotangent bundles.  A Hamiltonian $H: T^*M \to \R$ is called \emph{Tonelli} if the following two conditions hold.
\begin{enumerate}[\rm (H1)]
\item $H$ is of class $C^2$ on $T^*M$, and there exists a nondecreasing function $g_H:[0,+\infty)\to\R$ with $g_H(r)/r\to+\infty$ as $r\to+\infty$ such that
\begin{equation}\tag{H1}
H(x,p)\geqslant g_H(|p|),\qquad \forall\,(x,p)\in T^*M,
\end{equation}
\item there exists a nonincreasing function $\alpha_H:[0,+\infty)\to(0,+\infty)$ such that
\begin{equation}\tag{H2}
H_{pp}(x,p)\geqslant \alpha_H(|p|)\, I,\qquad \forall\,(x,p)\in T^*M,
\end{equation}
\end{enumerate}
where $|\cdot|$ denotes the norm on $T^*M$ induced by a fixed Riemannian metric on $M$ and $I$ is the identity matrix.  Note that $g_H$ is allowed to take negative values. This is necessary because the normalization $H\mapsto H-c[H]$ does not in general preserve a lower bound of the form (H1) with $g_H\geqslant0$ (for instance, a normalized mechanical Hamiltonian may be negative at some $(x,0)$).  We call $g_H$ and $\alpha_H$ the \emph{superlinear growth modulus} and the \emph{local uniform convexity modulus} of $H$, respectively, and refer to the pair $(g_H,\alpha_H)$ as the \emph{Tonelli modulus} of $H$.  The associated \emph{Tonelli Lagrangian} $L: TM \to \R$ is defined by
\[
L(x,v) = \sup_{p \in T^*_x M} \bigl\{ \langle p, v \rangle - H(x,p) \bigr\},
\qquad x \in M,\; v \in T_x M.
\]
We consider the evolutionary Hamilton--Jacobi equation
\begin{equation}\label{eq:HJe}\tag{HJ$_e$}
D_t u(t,x) + H(x, D_x u(t,x)) = 0, \qquad t > 0,\; x \in M,
\end{equation}
and the stationary Hamilton--Jacobi equation
\begin{equation}\label{eq:HJs}\tag{HJ$_s$}
H(x, D\phi(x)) = 0, \qquad x \in M,
\end{equation}
where the Ma\~n\'e critical value $c[H]$ of $H$ has been normalized to $0$, which is always possible by replacing $H$ with $H - c[H]$.  This normalization is kept throughout the paper.

Viscosity solutions of \eqref{eq:HJe} and \eqref{eq:HJs} are, in general, no better than locally semiconcave.  Semiconcave functions were employed to study well-posedness for \eqref{eq:HJe} even before the theory of viscosity solutions was developed \cite{Douglis1961, Kruzkov1975, Krylov_book1987}.  The notion of viscosity solution, introduced in the seminal papers \cite{Crandall_Lions1983, Crandall_Evans_Lions1984}, supplies the appropriate generalized-solution framework for the well-posedness questions of existence, uniqueness, and stability for \eqref{eq:HJe}.  An overview of the main features of this theory can be found in the monographs \cite{Bardi_Capuzzo-Dolcetta1997, Fleming_Soner_book2006}.  Nowadays, semiconcave functions find wide application across diverse mathematical fields, such as control theory and sensitivity analysis \cite{Hrustalev1978, Cannarsa_Frankowska1991, Fleming_McEneaney2000, Rifford2000, Rifford2002}, nonsmooth and variational analysis \cite{Rockafellar1982, Colombo_Marigonda2006}, and metric geometry \cite{Petrunin2007}.  Comprehensive treatments of semiconcave functions can be found in the monographs \cite{Cannarsa_Sinestrari_book, Villani_book2009, Ambrosio_GigliNicola_Savare_book2008}.

Beyond semiconcavity, finer regularity properties have been obtained through Lasry--Lions-type regularization methods.  Bernard \cite{Bernard2007} established the existence of optimal $C^{1,1}$ critical subsolutions of \eqref{eq:HJs}.  Arnaud \cite{Arnaud2011} gave a geometric and dynamical interpretation of the Lax--Oleinik semigroup via pseudographs, connecting the regularization viewpoint with weak KAM theory.  Cannarsa, Cheng, and Hong \cite{Cannarsa_Cheng_Hong2025} developed a unified regularization framework for weak KAM solutions and their cut loci, which provides the technical foundation for the touching approach used in the present paper.  At the level of the gradient, Bianchini and Tonon \cite{Bianchini_Tonon2012} proved that $D_x u$ belongs to SBV except for at most countably many times, which sharply constrains the Cantor part of the singularity measure.  Rifford \cite{Rifford2008} obtained further regularity results and generalized Sard theorems as well.  In the same touching spirit as the present paper, the Mather quotient problem was addressed geometrically in \cite{Cheng_Wei2026}.

Given a viscosity solution $u$ of \eqref{eq:HJe} with initial datum $u(0,\cdot) = \phi$, the Lax--Oleinik representation $u(t,x) = T_t^- \phi(x)$ holds \cite{Bernard2012}, and $u$ is locally semiconcave with linear modulus. A curve $\gamma: I \to M$ is called \emph{$(u,H)$-calibrated} if
\[
u(t',\gamma(t')) - u(t,\gamma(t)) = \int_t^{t'} L(\gamma(s),\dot\gamma(s))\, ds,
\qquad \forall\,[t,t'] \subset I.
\]
A point $(t,x)$ is \emph{regular} for $(u,H)$ if there exists $\delta>0$ and a calibrated curve $\gamma:[t,t+\delta]\to M$ with $\gamma(t)=x$. Otherwise it is a \emph{cut point}.  The \emph{cut locus} $\cut(u,H)$ is the set of all cut points. The \emph{cut time function} for $(\phi,H)$ is
\begin{equation}
\tau_{\phi,H}(x) = \sup\bigl\{ t \geqslant 0 : \exists\ \text{a $(u,H)$-calibrated curve}\ \gamma \in C^1([0,t],M)\ \text{with}\ \gamma(0) = x \bigr\},
\end{equation}
and the cut locus for $(\phi,H)$ is defined by $\cut(\phi,H)=\{x\in M:\tau_{\phi,H}(x)=0\}$.  We also denote by $\reg(\phi,H)$ the set of regular points for $(\phi,H)$, namely $\reg(\phi,H)=\{x\in M:\tau_{\phi,H}(x)>0\}$, so that $\cut(\phi,H)=M\setminus\reg(\phi,H)$.

\subsection{Cut locus via the touching approach}

The notion of cut locus originates in Riemann--Finsler geometry \cite{Klingenberg_book1982, Sakai_book1996, Berger_book2003, Itoh_Tanaka2001, Li_Nirenberg2005, Miura_Tanaka2024, Figalli_Rifford_Villani2011}, but it extends naturally to any semiconcave function.  Let $\phi$ be a semiconcave function with linear modulus on an open convex set $\Omega \subset \R^n$.  We call $x \in \Omega$ a \emph{regular point} of $\phi$ if there exist a convex neighborhood $V$ of $x$ and a semiconvex function $\psi: V \to \R$ with linear modulus touching $\phi$ from below at $x$.  We denote by $\reg(\phi)$ the set of regular points of $\phi$.  The \emph{cut locus} of $\phi$ is $\cut(\phi) = \Omega \setminus \reg(\phi)$.  This definition is equivalent to the non-emptiness of the proximal subdifferential, a standard notion from nonsmooth analysis (Proposition \ref{pro:prox}), which we adopt as the main criterion for cut points. In the time-dependent setting, Albano, Cannarsa, and Sinestrari \cite{Albano_Cannarsa_Sinestrari2020} used a similar proximal criterion to detect the generation of singularities from the initial datum of a Hamilton--Jacobi equation. This definition uses only the local semiconcavity structure, and for any Tonelli Hamiltonian $H$ it is equivalent to the variational one, in the sense that
\begin{equation}\label{eq:intro equi}
\reg(\phi) = \reg(\phi,H), \qquad \cut(\phi) = \cut(\phi,H).
\end{equation}
Thus the cut locus is an intrinsic analytic object associated to the semiconcave function itself, independent of the particular Hamiltonian (Theorem~\ref{thm:Reg_Cut}).

Using the equivalence \eqref{eq:intro equi} and a variational argument, we prove that $\cut(\phi)$ is a Lebesgue null set (Theorem~\ref{thm:cut0}).  As a corollary we obtain a streamlined proof of Alexandrov's theorem on the almost everywhere twice differentiability of semiconcave functions (Theorem~\ref{thm:Alex}).

We introduce the \emph{degree of regularity} $R_\phi(x)$, which measures the local $C^{1,1}$ deviation of $\phi$ near a regular point (Definition~\ref{def:R_phi}). Theorem~\ref{thm:1} establishes the pointwise equivalence $R_\phi(x)\sim 1/\tau_{\phi,H}(x)$ with explicit truncated one-sided bounds, effective for small cut times, that is, near the cut locus. This relation links the touching and the variational descriptions quantitatively. The function $R_\phi$ is the analytic object of the touching approach, and $\tau_{\phi,H}$ measures the maximal length of calibrated curves. As a first application, the estimate $R_\phi(x)\sim 1/\tau_{\phi,H}(x)$ yields necessary and sufficient conditions for $\cut(\phi)$ to be closed when $\phi$ is a weak KAM solution (Theorem~\ref{thm:close_cut}). When closed, the cut locus is a geometric object in the usual sense. The separation estimates for calibrated curves (Theorem~\ref{distance estimate}) rest on the same relation and supply the tools for the propagation results. Geometrically, they show that, from the trajectory point of view, no conjugate points occur before the cut locus along a calibrated curve, for arbitrary semiconcave initial data.

\subsection{Propagation of cut points and Alexandrov points}

The persistence of singularities (meaning that once a singularity appears, it propagates forward in time up to $+\infty$) provides one indication of the irreversibility of the evolutionary equation \eqref{eq:HJe} (see for instance \cite{Albano_Cannarsa2002, Cannarsa_Yu2009} for the local propagation, and \cite{ACNS2013, Cannarsa_Mazzola_Sinestrari2015, Cannarsa_Cheng_Hong2025, Albano_Cannarsa_Cheng_Mendico2026} for the global propagation), while the compactness of the evolution under the associated Lax--Oleinik semigroup gives another \cite{Ancona_Cannarsa_Nguyen2016_1, Ancona_Cannarsa_Nguyen2016_2}.

In \cite{Albano_Cannarsa2002}, the authors introduced the concept of \emph{generalized characteristics} for the Hamilton--Jacobi equation \eqref{eq:HJe}, a notion that has since become standard.  The structure of singularities of semiconcave solutions was further investigated by Rifford \cite{Rifford2002, Rifford2003}, who gave a detailed classification of singular points.  In the one-dimensional case, the idea of generalized characteristics can also be traced back to earlier work by Dafermos \cite{Dafermos1977} on the Burgers equation.  For surveys of the singularities of viscosity solutions of Hamilton--Jacobi equations, we refer the reader to \cite{Cannarsa_Cheng2021a, Cheng_Hong2026}.  

A Lipschitz curve $\gamma: [t_0,\infty) \to M$ with $t_0>0$ is called a \emph{generalized characteristic} for $(u,H)$ if it satisfies the differential inclusion
\begin{equation}\label{eq:intro_gc}
\dot{\gamma}(s) \in \operatorname{co} H_p\bigl(\gamma(s), D^+_x u(s, \gamma(s))\bigr), \qquad \text{a.e. } s \geqslant t_0.
\end{equation}
It was proved in \cite{Albano_Cannarsa2002} that a generalized characteristic exists globally from any initial point $x_0$, and propagates the singularity locally if $x_0$ is singular and $0 \notin \operatorname{co} H_p(x_0, D^+ u(x_0))$ (see \cite{Yu2006} for improvements).  Inspired by earlier works \cite{Bogaevsky2002, Bogaevski2006, Stromberg2013}, Khanin and Sobolevski essentially established the existence of singular characteristics satisfying \eqref{eq:intro_gc} without the convex hull under additional conditions on the initial data \cite{Khanin_Sobolevski2016}.  Such singular characteristics are called \emph{strict singular characteristics} (or \emph{broken characteristics} in \cite{Stromberg2013}).

In this paper, we prove two propagation results.  First, we show that any generalized characteristic starting from a cut point remains in the cut locus for all future times (Theorem~\ref{thm:pro_cut}), strengthening the classical propagation theorems for the closure of the singular set (for the stationary case, see \cite[Theorem 5.7]{CCHW2024}). Second, we prove that Alexandrov points (points of twice differentiability) propagate forward along calibrated curves and that the second derivative evolves according to a matrix Riccati equation (Theorem~\ref{the:propagate alex}).  The classical theory of Alexandrov point propagation requires the initial datum to be of class $C^2$. Here the result is established for semiconcave initial data, which is the general regularity for the Cauchy problem~\eqref{eq:HJe}. This extension is useful for two reasons. It supplies the second-order analysis needed to study strict singular characteristics in the general case, where the initial datum is only semiconcave. It also makes the second derivative $Q(t) = D^2_xu(t,\gamma(t))$ along calibrated curves well defined away from the cut locus, without restricting to a $C^2$ neighborhood of the Mather set.  The result also clarifies a subtle point in the propagation theory. Alexandrov points always propagate forward along calibrated curves, while backward propagation requires the additional condition that the point lies in the $C^{1,1}$-support of the solution (see Theorem~\ref{the:propagate alex}~(2)).

\medskip

Of the results presented in this paper, the following are new:
\begin{enumerate}[\rm (1)]
\item the equivalence, for a general semiconcave function, between the cut locus defined by touching and the variational cut locus $\cut(\phi) = \cut(\phi,H)$ (Theorem~\ref{thm:Reg_Cut}). This equivalence is new, though related characterizations exist in the literature \cite{Cannarsa_Sinestrari_book}. Here cut points are characterized by the emptiness of the proximal subdifferential $\partial_P\phi$, a criterion also used, in the time-dependent setting, in \cite{Albano_Cannarsa_Sinestrari2020}.
\item the notion of the degree of regularity $R_\phi$ and its quantitative relation $R_\phi(x)\sim 1/\tau_{\phi,H}(x)$ to the cut time function $\tau_{\phi,H}$ (Theorem~\ref{thm:1}).
\item the separation estimate for calibrated curves showing that no conjugate points occur before the cut locus, for arbitrary semiconcave initial data (Theorem~\ref{distance estimate}).
\item the propagation of cut points along generalized characteristics for the evolutionary equation \eqref{eq:HJe} (Theorem~\ref{thm:pro_cut}), which extends \cite[Theorem 5.7]{CCHW2024} from the stationary to the evolutionary setting, and the propagation of Alexandrov points with the matrix Riccati equation (Theorem~\ref{the:propagate alex}).
\end{enumerate}
Earlier propagation results were restricted to singular (non-differentiability) points and did not track the evolution of second derivatives.  The Lebesgue null property of $\cut(\phi)$, the streamlined proof of Alexandrov's theorem, and the closedness criterion for weak KAM solutions (cf. \cite{Cannarsa_Cheng_Fathi2021}) recover known facts within the unified framework of the touching approach.

\medskip

This paper is organized as follows. Section~2 collects preliminaries on semiconcave functions and Hamilton--Jacobi equations, including the definition of the cut time function and some fundamental results on it (Definition~\ref{defn:tau_phi} and Proposition~\ref{pro:tau-usc}). Section~3 develops the touching approach to the cut locus. It gives the definition of the cut locus of a semiconcave function and its equivalent characterizations (Definition~\ref{defn:various_sets}, Proposition \ref{pro:prox} and Proposition~\ref{pro:touching_diff_reg}), introduces the degree of regularity $R_\phi$ and its quantitative relation to the cut time function $\tau_{\phi,H}$ (Theorem~\ref{thm:1}), and proves the equivalence between the touching and the variational definitions (Theorem~\ref{thm:Reg_Cut}). The section also treats the Lebesgue null property of the cut locus (Theorem~\ref{thm:cut0}), a streamlined proof of Alexandrov's theorem (Theorem~\ref{thm:Alex}), the closedness criterion for the cut locus of weak KAM solutions (Theorem~\ref{thm:close_cut}), the separation estimate for calibrated curves (Theorem~\ref{distance estimate}), and the propagation of cut points and Alexandrov points (Theorems~\ref{thm:pro_cut} and~\ref{the:propagate alex}).

\medskip

The second paper will also study the quantitative stability of the cut time function and of the cut locus under $C^{2,\alpha}$ perturbations. Our Theorem \ref{thm:1} on the quantitative relation $R_\phi(x)\sim 1/\tau_{\phi,H}(x)$ in this article will be a key tool to obtain those results.

\medskip

\begin{sloppypar}
\noindent\textbf{Acknowledgements.} Piermarco Cannarsa was supported, in part, by the National Group for Mathematical Analysis, Probability and Applications of the Italian Istituto Nazionale di Alta Matematica ``Francesco Severi'', and acknowledges support from the Excellence Department Project awarded to the Department of Mathematics, University of Rome Tor Vergata, CUP E83C23000330006, and the European Union--Next Generation EU, PRIN 2022 PNRR (CUP E53D23017910001).  Wei Cheng is partly supported by National Natural Science Foundation of China (Grant No.~12231010).  Jiahui Hong is partly supported by National Natural Science Foundation of China (Grant No.~12501245).
\end{sloppypar}

\section{Preliminaries}

\subsection{Semiconcave function}

Let \( \Omega \subset \R^n \) be an open convex subset. A function $\phi:\Omega\to\R$ is called \emph{semiconcave} with linear modulus $C\geqslant0$ if 
\begin{align*}
	\lambda\phi(x)+(1-\lambda)\phi(y)-\phi(\lambda x+(1-\lambda)y)\leqslant \frac C2\lambda(1-\lambda)|x-y|^2,\quad \forall x,y\in \Omega,\ \lambda\in[0,1].
\end{align*}
We also say $\phi$ is $C$-semiconcave on $\Omega$ and call $C$ the semiconcavity constant of $\phi$. We denote by $\text{\rm SCL}\,(\Omega)$ the family of all semiconcave functions with linear modulus on $\Omega$. It is well known that $\phi\in\text{\rm SCL}\,(\Omega)$ is locally Lipschitz continuous. A function $\phi:\Omega\to\R$ is called \emph{semiconvex} if $-\phi$ is semiconcave.

For real-valued functions $f,g$ on $\Omega$ we say \emph{$g$ touches $f$ from below at $x$ in $\Omega$} if 
\begin{align*}
	g(x)=f(x)\quad\text{and}\quad g(y)\leqslant f(y),\qquad\forall y\in \Omega.
\end{align*}
Similarly, we say $g$ touches $f$ from above at $x$ if $-g$ touches $-f$ from below at $x$.

Let $\phi$ be a function on $\Omega$. The \emph{superdifferential} and \emph{subdifferential} of $\phi$ are defined as
\begin{align*}
	D^{+}\phi(x)&=\{D\psi(x):\exists\ r>0,\ \psi\in C^1(B(x,r)),\ \text{s.t. $\psi$ touches $\phi$ from above at $x$ in $B(x,r)$}\},\ x\in\Omega,\\
	D^{-}\phi(x)&=\{D\psi(x):\exists\ r>0,\ \psi\in C^1(B(x,r)),\ \text{s.t. $\psi$ touches $\phi$ from below at $x$ in $B(x,r)$}\},\ x\in\Omega,
\end{align*}
where $B(x,r)$ denotes the open ball centered at $x$ with radius $r$.

For $\phi\in\text{\rm SCL}\,(\Omega)$, it is well known that $D^{+}\phi(x)$ is a nonempty, convex, compact subset of $\R^n$ for any $x\in\Omega$. $D^{+}\phi(x)$ is a singleton if and only if $\phi$ is differentiable at $x$, and in this case we have $D^{+}\phi(x)=D\phi(x)$. The set-valued map $x\rightrightarrows D^+\phi$ is upper-semicontinuous on $\Omega$. Similar properties hold for semiconvex functions and their subdifferential. The readers can refer to \cite[\S3]{Cannarsa_Sinestrari_book} for more details on semiconcave functions.

\begin{Pro}[\protect{\cite[Proposition 3.3.1]{Cannarsa_Sinestrari_book}}]\label{pro:scl fund}
	Let \( \Omega \subset \R^n \) be an open convex subset and $\phi$ be a semiconcave function with linear modulus $C>0$ on $\Omega$. Then for any $x\in\Omega$ and $p\in D^+\phi(x)$, the function $\psi(y)=\phi(x)+p\cdot(y-x)+\frac{C}{2}|y-x|^2$, $y\in\Omega$ touches $\phi$ from above at $x$ in $\Omega$.
\end{Pro}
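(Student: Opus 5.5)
The plan is to reduce the statement to a one–dimensional estimate along segments, using the convexity of $\Omega$ and the semiconcavity inequality. First I would note that semiconcavity with modulus $C$ is equivalent to concavity of $g(y)=\phi(y)-\frac C2|y|^2$ on $\Omega$. To see this, expand $\lambda|x|^2+(1-\lambda)|y|^2-|\lambda x+(1-\lambda)y|^2=\lambda(1-\lambda)|x-y|^2$. Substituting this identity into the defining inequality gives exactly $\lambda g(x)+(1-\lambda)g(y)\leqslant g(\lambda x+(1-\lambda)y)$.

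Next I would identify the superdifferential of $g$ at $x$. If $\psi\in C^1(B(x,r))$ touches $\phi$ from above at $x$ with $D\psi(x)=p$, then $\psi-\frac C2|\cdot|^2$ touches $g$ from above at $x$ locally, with gradient $p-Cx$. Hence $p-Cx\in D^+g(x)$, and in particular $g(x+h)\leqslant g(x)+(p-Cx)\cdot h+o(|h|)$ as $h\to0$.

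The key step, and the only slightly delicate one, is to upgrade this local inequality to a global one using concavity. Fix $y\in\Omega$ and set $h=y-x$. Consider $f(s)=g(x+sh)$ for $s\in[0,1]$; this is well defined since $\Omega$ is convex, and it is concave. For $s\in(0,1]$, concavity gives $\frac{f(1)-f(0)}{1}\leqslant \frac{f(s)-f(0)}{s}$. The local bound gives $\frac{f(s)-f(0)}{s}\leqslant (p-Cx)\cdot h+o(1)$ as $s\to0^+$. Letting $s\to0^+$ yields $g(y)\leqslant g(x)+(p-Cx)\cdot(y-x)$ for every $y\in\Omega$.

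Finally I would translate back to $\phi$:
\[
\phi(y)\leqslant \phi(x)+\tfrac C2\bigl(|y|^2-|x|^2\bigr)+(p-Cx)\cdot(y-x).
\]
Using $|y|^2-|x|^2-2x\cdot(y-x)=|y-x|^2$, this becomes $\phi(y)\leqslant\phi(x)+p\cdot(y-x)+\frac C2|y-x|^2=\psi(y)$. Equality at $y=x$ is trivial, so $\psi$ touches $\phi$ from above at $x$ in all of $\Omega$. I do not expect any real obstacle. The one point requiring care is the passage from the local $o(|h|)$ control to the global bound, which is handled by the monotonicity of difference quotients of the concave function $f$.
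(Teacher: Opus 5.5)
Your proof is correct, and it is essentially the standard argument behind \cite[Proposition 3.3.1]{Cannarsa_Sinestrari_book}, which the paper cites without proof. The monotone difference quotient of $f(s)=g(x+s(y-x))$ is just the semiconcavity inequality at the points $x$, $x+s(y-x)$, $y$, followed by the limit $s\to0^+$ using $p\in D^+\phi(x)$. The only difference is cosmetic. You first subtract $\frac C2|\cdot|^2$ to work with a concave function, which uses that the modulus is linear. Dividing the semiconcavity inequality directly by $s$ gives the same conclusion for an arbitrary modulus $\omega$, with $\frac C2|y-x|^2$ replaced by $|y-x|\,\omega(|y-x|)$.
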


\begin{Pro}[\protect{\cite[Proposition 2.5]{Cannarsa_Cheng_Hong2025}}]\label{pro:touching}
Let \( \Omega \subset \R^n \) be an open convex subset, $\phi\in\text{\rm SCL}\,(\Omega)$ and $\psi:\Omega\to\R$ be semiconvex with linear modulus. If $\psi$ touches $\phi$ from below at $x$, then both $\psi$ and $\phi$ are differentiable at $x$ and we have $D\psi(x)=D\phi(x)$.
\end{Pro}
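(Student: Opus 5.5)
The plan is to squeeze $\phi$ at $x$ between the semiconvex function $\psi$ from below and a quadratic supporting function from above, and to read off differentiability from the fact that a lower and an upper first-order expansion at $x$ must share the same linear part. I will take $C>0$ to be a semiconcavity constant for $\phi$ and $C'>0$ a semiconvexity constant for $\psi$, and assume throughout that $\psi$ touches $\phi$ from below at $x$.

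First, the upper bound for $\phi$. Since $D^+\phi(x)$ is nonempty, I pick any $p\in D^+\phi(x)$. Proposition \ref{pro:scl fund} then gives
\[
\phi(y)\leqslant \phi(x)+p\cdot(y-x)+\tfrac C2|y-x|^2 \qquad \text{for all } y\in\Omega .
\]
Second, the lower bound for $\psi$. Applying Proposition \ref{pro:scl fund} to $-\psi$, I pick $q\in D^-\psi(x)$, which is nonempty because it equals $-D^+(-\psi)(x)$. This gives
\[
\psi(y)\geqslant \psi(x)+q\cdot(y-x)-\tfrac{C'}2|y-x|^2 .
\]

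Next I chain the two inequalities using the touching hypothesis. Since $\psi(x)=\phi(x)$ and $\psi\leqslant\phi$ on $\Omega$, I get
\[
\psi(x)+q\cdot(y-x)-\tfrac{C'}2|y-x|^2\leqslant \psi(y)\leqslant\phi(y)\leqslant \phi(x)+p\cdot(y-x)+\tfrac C2|y-x|^2 .
\]
The constant terms cancel, so $(q-p)\cdot(y-x)\leqslant \frac{C+C'}2|y-x|^2$. I take $y=x+t(q-p)$ with small $t>0$; this point lies in $\Omega$ because $\Omega$ is open. The inequality becomes $t|q-p|^2\leqslant O(t^2)$, which forces $q=p$.

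Since $p$ was arbitrary in $D^+\phi(x)$, this shows that $D^+\phi(x)$ is the singleton $\{q\}$. By the stated characterization of differentiability for semiconcave functions, $\phi$ is differentiable at $x$ with $D\phi(x)=q$. The same argument with $q$ arbitrary in $D^-\psi(x)$ shows that $D^-\psi(x)$ is a singleton, so $\psi$ is differentiable at $x$ with $D\psi(x)=p=D\phi(x)$.

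There is no serious obstacle here. The only care needed is to justify the one-sided quadratic bound for $\psi$ by applying Proposition \ref{pro:scl fund} to $-\psi$, and to check that the chosen test points $y$ stay inside $\Omega$.
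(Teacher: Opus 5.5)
Your proof is correct and complete. Proposition~\ref{pro:scl fund} gives two one-sided quadratic bounds, one applied to $\phi$ and one to $-\psi$. You chain them through $\psi\leqslant\phi$, with equality at $x$, and test at $y=x+t(q-p)$. This forces every $p\in D^+\phi(x)$ to equal every $q\in D^-\psi(x)$. Since both sets are nonempty, they are the same singleton, which gives differentiability of both functions and $D\psi(x)=D\phi(x)$.

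The paper does not prove this statement; it quotes it from \cite[Proposition 2.5]{Cannarsa_Cheng_Hong2025}. Your computation is the same squeeze the paper itself uses in the proof of Lemma~\ref{lem:touching_C11}(2), so it fits the paper's framework.

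A standard alternative is to read off $D^-\psi(x)\subset D^-\phi(x)$ and $D^+\phi(x)\subset D^+\psi(x)$ directly from the touching. One then invokes the fact that a function whose super- and subdifferential at a point are both nonempty is differentiable there. That route is shorter but depends on this extra lemma. Your argument needs only Proposition~\ref{pro:scl fund} and the singleton characterization of differentiability.
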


\begin{defn}
	Let \( \Omega \subset \R^n \) be an open convex subset and $\phi\in\text{\rm SCL}\,(\Omega)$.
	\begin{enumerate}[(1)]
		\item We define the \emph{differentiability set} of \( \phi \) as 
		\[
		\text{Diff}\,(\phi)= \{ x \in \Omega : \phi \text{ is differentiable at } x \},
		\]
		and the \emph{singular set} of $\phi$ as $\sing(\phi)=\Omega\setminus\text{Diff}\,(\phi)$.
		\item We denote by \( \text{Alex}\,(\phi) \) the set of points in \( \Omega \) where \( \phi \) is twice differentiable, i.e.,
		\begin{align*}
			\text{Alex}\,(\phi)= \big\{ x \in \Omega :\ & \text{there exists }D\phi(x)\text{ and }D^2\phi(x) \text{ such that} \\
			& \phi(y) = \phi(x) + \langle D\phi(x), y-x \rangle + \frac{1}{2} \langle D^2\phi(x)(y-x), y-x \rangle + o(|y-x|^2) \big\}.
		\end{align*}
	\end{enumerate}
\end{defn}

\begin{Pro}[\protect{\cite[Theorem 1.2]{Azagra2023}}]\label{pro:Alex2}
	Let $\Omega \subset \R^n$ be an open and convex set, and let $\phi \in \operatorname{SCL}(\Omega)$.
	Then for any $x \in \operatorname{Diff}(\phi)$ and any symmetric real $n \times n$ matrix $A$,
	we have
	\begin{align*}
		\lim_{y \to x} \frac{\bigl|\phi(y) - \phi(x) - \langle D\phi(x), y-x \rangle
			- \tfrac{1}{2} \langle A(y-x), y-x \rangle\bigr|}{|y-x|^2} = 0
	\end{align*}
	if and only if
	\begin{align*}
		\lim_{y \to x} \sup_{p \in D^{+}\phi(y)} \frac{|p - D\phi(x) - A(y-x)|}{|y-x|} = 0.
	\end{align*}
\end{Pro}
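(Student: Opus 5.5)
The plan is to prove the two implications separately, using Proposition~\ref{pro:scl fund} to control the directions in which semiconcavity already gives the estimate for free. Throughout, fix $x\in\operatorname{Diff}(\phi)$ and write $p_0=D\phi(x)$. Let $\phi$ be $C$-semiconcave on a ball $B(x,2r)\subset\Omega$. Set
\[
w(y)=\phi(y)-\phi(x)-\langle p_0,y-x\rangle-\tfrac12\langle A(y-x),y-x\rangle .
\]
Then $w$ is semiconcave with constant $C+\|A\|$. Its superdifferential is $D^+w(y)=D^+\phi(y)-p_0-A(y-x)$, and $w(x)=0$, $Dw(x)=0$. The statement becomes: $w(y)=o(|y-x|^2)$ if and only if $\sup_{q\in D^+w(y)}|q|=o(|y-x|)$.

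\emph{Second condition $\Rightarrow$ first.} This is the easy direction. Semiconcave functions are locally Lipschitz, so $w$ is absolutely continuous along the segment $[x,y]$. For a.e.\ $s$, $w$ is differentiable at $x+s(y-x)$ (restricted to the line), and the derivative equals $\langle q,y-x\rangle$ for any $q\in D^+w(x+s(y-x))$. Hence
\[
|w(y)|\leqslant \int_0^1 \sup_{q\in D^+w(x+s(y-x))}|q|\,|y-x|\,ds .
\]
Given $\varepsilon>0$, choose $\delta$ such that $\sup|q|\leqslant \varepsilon|z-x|$ for $|z-x|<\delta$. Then $|w(y)|\leqslant \varepsilon|y-x|^2/2$ for $|y-x|<\delta$.

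\emph{First condition $\Rightarrow$ second.} This is the main step. Assume $|w(z)|\leqslant\varepsilon|z-x|^2$ for $|z-x|<\delta$. Take $y$ with $|y-x|=\rho<\delta/3$, $q\in D^+w(y)$, and a unit vector $e$. Apply Proposition~\ref{pro:scl fund} to $w$ at $y$ with $z=y+he$, where $h>0$ will be chosen below:
\[
w(y+he)\leqslant w(y)+h\langle q,e\rangle+\tfrac{C'}2h^2,\qquad C'=C+\|A\|.
\]
Since $|y+he-x|\leqslant\rho+h$, this gives
\[
h\langle q,e\rangle\geqslant -\varepsilon(\rho+h)^2-\varepsilon\rho^2-\tfrac{C'}2h^2 .
\]
Take $h=\sqrt{\varepsilon}\,\rho$ and $e=-q/|q|$. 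Dividing by $h$ yields
\[
|q|\leqslant \frac{\varepsilon(1+\sqrt\varepsilon)^2\rho^2+\varepsilon\rho^2}{\sqrt\varepsilon\,\rho}+\tfrac{C'}2\sqrt\varepsilon\,\rho=O(\sqrt\varepsilon)\,\rho ,
\]
uniformly in $q\in D^+w(y)$. Because $\varepsilon$ is arbitrary, this proves $\sup_{q\in D^+w(y)}|q|=o(|y-x|)$.

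The only delicate point is the balancing choice $h\sim\sqrt\varepsilon\,\rho$. The scale of $h$ must sit between the error $\varepsilon\rho^2$ and the semiconcavity term $C'h^2$. We also need $h$ small enough that $y+he$ stays in $B(x,\delta)$, which holds for $\varepsilon<1$. Semiconcavity is used only through the one-sided upper bound of Proposition~\ref{pro:scl fund}. Using it in the direction $-q$ bounds $|q|$ completely, so no semiconvexity is needed.
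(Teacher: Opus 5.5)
Your proof is correct. It necessarily takes a different route from the paper, because the paper gives no proof of this statement.

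The paper imports the result from \cite[Theorem 1.2]{Azagra2023}, where it is stated for convex functions. The transfer to $\phi\in\operatorname{SCL}(\Omega)$ is the standard one: $f=\frac C2|\cdot|^2-\phi$ is convex with $\partial f(y)=Cy-D^+\phi(y)$, so the statement for the pair $(\phi,A)$ is the convex statement for $(f,CI-A)$.

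You instead argue directly in the semiconcave setting. After normalizing to $w$, the implication from the superdifferential condition to the Taylor expansion is integration along the segment. The converse uses only the one-sided quadratic bound of Proposition~\ref{pro:scl fund}, applied in the direction $-q/|q|$ with the balancing step $h=\sqrt{\varepsilon}\,\rho$. The arithmetic checks out: for $\varepsilon<1$ and $0<\rho<\delta/3$ it gives $|q|\leqslant(5+C'/2)\sqrt{\varepsilon}\,\rho$.

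Your route buys self-containedness: it needs nothing beyond Proposition~\ref{pro:scl fund} and a.e.\ differentiability of a Lipschitz function on a segment. It also gives an explicit quantitative form, with the square-root loss that is natural when only a one-sided quadratic bound is available. The citation buys brevity, and it keeps a single source for this fact and for the contact-set result (Proposition~\ref{lem:level}) used later in the proof of Alexandrov's theorem.

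Two small points should be made explicit. First, let $g(s)=w(x+s(y-x))$. At points $s$ where $g$ is differentiable, the identity $g'(s)=\langle q,y-x\rangle$ holds for every $q\in D^+w(x+s(y-x))$. This follows from the superdifferential inequality applied with increments $h>0$ and $h<0$, which give opposite one-sided bounds. Second, $\delta$ must be small enough that $B(x,\delta)$ lies in the ball where the constant $C'$ is valid. Only then may Proposition~\ref{pro:scl fund} be applied at $y$ with $z=y+he$.
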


Let $M$ be a smooth manifold without boundary. We say $\phi:M\to\R$ is \emph{locally semiconcave} with linear modulus if for any $x\in M$ there exists an open neighborhood $U$ of $x$ with local coordinate such that $\phi\vert_U$ is semiconcave with linear modulus. We denote by $\text{\rm SCL}_{\rm loc}\,(M)$ the space of locally semiconcave functions with linear modulus on $M$. If $M$ is compact, we just denote $\text{\rm SCL}\,(M)$ instead of $\text{\rm SCL}_{\rm loc}\,(M)$, since $M$ can be covered by finitely many coordinate charts. A function $\phi:M\to\R$ is called \emph{locally semiconvex} if $-\phi$ is locally semiconcave.

\subsection{Hamilton--Jacobi equation}

\begin{sloppypar}
Let $M$ be a compact connected smooth manifold without boundary, let $L:TM\to\R$ be a Tonelli Lagrangian with associated Hamiltonian $H$. For any function $\phi\in C^0(M)$ we define the Lax--Oleinik semigroups $\{T^{\pm}_t\}_{t\geqslant0}$ by
\end{sloppypar}
\begin{align*}
	T^+_t\phi(x)=\sup_{y\in M}\{\phi(y)-A_t(x,y)\},\quad T^-_t\phi(x)=\inf_{y\in M}\{\phi(y)+A_t(y,x)\},\quad x\in M,
\end{align*}
where $A_t(x,y)$ is the fundamental solution given by
\begin{align*}
	A_t(x,y)=\inf_{\xi}\int^t_0L(\xi(s),\dot{\xi}(s))\ ds,\quad t>0,\ x,y\in M,
\end{align*}
with the infimum taken over the set of absolutely continuous curves $\xi:[0,t]\to M$ connecting $\xi(0)=x$ to $\xi(t)=y$. For convenience, we set $T^{\pm}_0 = \mathrm{id}$. It is well known that both $A_t(x,\cdot)$ and $A_t(\cdot,x)$ belong to $\text{\rm SCL}(M)$ for any $t>0$ and $x\in M$, and the semigroups $\{T^{\pm}_t\}_{t\geqslant0}$ are order-preserving.

The following quantitative regularity result for the fundamental solution will be used in Section~3.

\begin{Pro}[\protect{\cite[Propositions B.3 and B.8]{Cannarsa_Cheng3}}]\label{pro:At}
	Let $H:\T^n\times\R^n\to\R$ be a Tonelli Hamiltonian and let $A_t(x,y)$ be the associated fundamental solution. Then the following statements hold.
	\begin{enumerate}[\rm (1)]
		\item For every $R>0$ there exist $t_R>0$ and $K_R>0$, depending on $R$ and on the local $C^2$ modulus and the Tonelli modulus of $H$, such that, for every $0<t\leqslant t_R$ and $y\in\T^n$, the function $x\mapsto A_t(x,y)$ is semiconcave with constant $K_R/t$ on $B(y,Rt)$, understood in local flat coordinates. The threshold $t_R$ is chosen small enough that these balls lie within the injectivity radius.
		\item For every $R>0$ there exist $\tau_0(R)\in(0,1]$ and $K(R)>0$, depending on $R$ and on the local $C^2$ modulus and the Tonelli modulus of $H$, such that for every $t\in(0,\tau_0(R)]$ and $y\in\T^n$ the function $x\mapsto A_t(x,y)$ is of class $C^2$ on the ball $B(y,Rt)$ and satisfies
		\begin{equation}
			D_{xx}A_t(x,y)\geqslant\frac{K(R)}{t}\,I,\qquad\forall x\in B(y,Rt).
		\end{equation}
	\end{enumerate}
	The result for the initial endpoint follows from the corresponding result for the terminal endpoint by using the reversed Lagrangian $\widehat L(x,v)=L(x,-v)$, whose action satisfies $\widehat A_t(y,x)=A_t(x,y)$.
\end{Pro}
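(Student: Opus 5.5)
The plan is to treat both statements in local flat coordinates around $y$. Since $|x-y|<Rt$ and $t$ is small, everything then happens inside a single chart (this is where $t_R$ and $\tau_0(R)$ must be small relative to the injectivity radius). The first step, which underlies both parts, is an \emph{a priori velocity bound}. If $|x-y|\leqslant Rt$, then every minimizer $\xi:[0,t]\to M$ for $A_t(x,y)$ satisfies $|\dot\xi(s)|\leqslant \kappa(R)$ for all $s$, with $\kappa$ depending only on $R$ and the Tonelli modulus.

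To obtain it:
\begin{itemize}
\item Compare with the straight segment. This gives $A_t(x,y)\leqslant t\max_{|v|\leqslant R}L$.
\item Use superlinearity (the Legendre dual of (H1)). This bounds $\int_0^t|\dot\xi|$ and gives a time $s_0$ with $|\dot\xi(s_0)|\leqslant C(R)$.
\item Use conservation of energy $H(\xi,p)$ along the Euler--Lagrange flow. This propagates the bound to all $s$.
\end{itemize}
The resulting compact set of momenta $\{|p|\leqslant \kappa'(R)\}$ is where the local $C^2$ modulus of $H$ and $\alpha_H(\kappa'(R))$ enter.

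\textbf{Part (1).} For part (1) I would use the standard variation argument. Fix a minimizer $\xi$ from $x$ to $y$ and $h$ small with $x\pm h\in B(y,Rt)$, and set $\xi_\pm(s)=\xi(s)\pm(1-s/t)h$. These are admissible curves from $x\pm h$ to $y$, so
\[
A_t(x+h,y)+A_t(x-h,y)-2A_t(x,y)\leqslant\int_0^t\bigl[L(\xi_+,\dot\xi_+)+L(\xi_-,\dot\xi_-)-2L(\xi,\dot\xi)\bigr]ds .
\]
Taylor expansion of $L$ to second order applies, since $L$ is $C^2$ on the compact set $\{|v|\leqslant\kappa(R)+1\}$, which contains the velocities of $\xi_\pm$ once $|h|\leqslant t$. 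The first-order terms cancel. The integrand is bounded by $C(R)\bigl(|h|^2+|h|^2/t^2\bigr)$, and integrating over $[0,t]$ gives at most $K_R|h|^2/t$ for $t\leqslant t_R$. This is exactly semiconcavity with constant $K_R/t$ on the ball.

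\textbf{Part (2).} For part (2) I would use the method of characteristics. Let $\Phi_s$ denote the Hamiltonian flow. For $p$ in a suitable compact set, define $X_t(p)=\pi\,\Phi_{-t}(y,p)$. In coordinates,
\[
X_t(p)=y-tH_p(y,p)+O(t^2),\qquad D_pX_t(p)=-tH_{pp}(y,p)+O(t^2).
\]
Since $H_{pp}\geqslant\alpha_H I$ on this set, for $t\leqslant\tau_0(R)$ the map $p\mapsto X_t(p)$ is a $C^1$ diffeomorphism onto a neighbourhood of $\overline{B(y,Rt)}$. By the velocity bound, every minimizer from $x$ to $y$ has terminal momentum in this set, so the minimizer is unique and given by this characteristic. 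Then $A_t(\cdot,y)$ is differentiable with $D_xA_t(x,y)=-p_0(x)$, where $p_0(x)$ is the initial momentum of the characteristic. This is $C^1$ in $x$, hence $A_t(\cdot,y)\in C^2$. Moreover,
\[
D_{xx}A_t=-\frac{\partial p_0}{\partial x_0}=\bigl(tH_{pp}\bigr)^{-1}+O(1),
\]
using $\partial x_0/\partial p_0=-tH_{pp}+O(t^2)$ from the backward linearized flow. Shrinking $\tau_0(R)$ so that the $O(1)$ term is dominated then yields $D_{xx}A_t\geqslant K(R)t^{-1}I$ with, for example, $K(R)=\tfrac12\bigl(\sup_{|p|\leqslant\kappa'}\|H_{pp}\|\bigr)^{-1}$.

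\textbf{Initial endpoint.} The statement for the initial endpoint follows from the identity $\widehat A_t(y,x)=A_t(x,y)$ for the reversed Lagrangian $\widehat L(x,v)=L(x,-v)$. This Lagrangian is Tonelli with the same moduli.

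\textbf{Main obstacle.} I expect the main difficulty to be the uniformity in part (2). One must show that the $C^1$ expansion of the flow is uniform over the compact momentum set, and that the inverse-function neighbourhood of $X_t$ covers all of $B(y,Rt)$ at scale $t$. This needs a quantitative (scaled) inverse function theorem. I would apply it to the rescaled map $q\mapsto (X_t(q)-y)/t$, which converges in $C^1$ to $-H_p(y,q)$, a diffeomorphism by strict convexity. With only $H\in C^2$ the flow is merely $C^1$, so the argument must avoid any third derivatives.
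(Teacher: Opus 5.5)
Your argument is correct, and it is the standard proof behind Propositions B.3 and B.8 of Cannarsa--Cheng, which the paper quotes without proof. It runs: an a priori velocity bound, then the initial-point variation $\xi_\pm=\xi\pm(1-s/t)h$ for the $K_R/t$ semiconcavity, then short-time characteristics with a rescaled injectivity argument giving $D_{xx}A_t=(tH_{pp})^{-1}+o(1/t)$, all done directly at the initial endpoint rather than through the reversed Lagrangian.

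Two small adjustments. First, with $H$ only $C^2$ the remainder in $D_pX_t$ is $o(t)$ rather than $O(t^2)$, which is still enough for the lower bound. Second, the covering of $B(y,Rt)$ comes for free from the existence of minimizers together with the momentum bound, so the scaled argument is needed only for injectivity. That can be done, e.g.\ by uniform monotonicity of $q\mapsto (X_t(q)-y)/t$ on a momentum ball.
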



Recall that a continuous function $u:(0,\infty)\times M\to\R$ is called a \textit{viscosity solution} of the evolutionary Hamilton--Jacobi equation \eqref{eq:HJe} if the following two conditions hold for every $(t,x)\in(0,\infty)\times M$:
\begin{enumerate}[--]
	\item For any $C^1$ function $v:(0,\infty)\times M\to\R$ touching $u$ from above at $(t,x)$, we have
	\begin{align*}
		D_tv(t,x)+H(x,D_xv(t,x))\leqslant 0.
	\end{align*}
	\item For any $C^1$ function $v:(0,\infty)\times M\to\R$ touching $u$ from below at $(t,x)$, we have
	\begin{align*}
		D_tv(t,x)+H(x,D_xv(t,x))\geqslant 0.
	\end{align*}
\end{enumerate}
It is well known that $u\in\text{\rm SCL}_{\rm loc}\,((0,\infty)\times M)$ and $u(t,\cdot)\in\text{\rm SCL}\,(M)$ for all $t>0$. The unique viscosity solution of \eqref{eq:HJe} with initial data $u(0,\cdot)=\phi$, where $\phi\in C^0(M)$, has the representation formula $u(t,x)=T^-_t\phi(x)$, $(t,x)\in[0,\infty)\times M$.

A function $\phi:M\to\R$ is called a \emph{weak KAM solution} for the stationary Hamilton--Jacobi equation \eqref{eq:HJs} if $T^-_t\phi=\phi$ for all $t\geqslant0$. For a Tonelli Hamiltonian $H$, the notions of weak KAM solution and viscosity solution of \eqref{eq:HJs} coincide. The readers can find more details on weak KAM theory under our conditions in \cite{Fathi_Siconolfi2005}.

\subsection{Cut time function} 
Let $M$ be a compact connected smooth manifold without boundary, $H:T^*M\to\R$ be a Tonelli Hamiltonian and $\phi\in \text{\rm SCL}(M)$. Let $u$ be the viscosity solution to the Cauchy problem of evolutionary Hamilton--Jacobi equation
\begin{equation}\label{eq:HJephi pre}\tag{HJ$_e\phi$}
	\begin{cases}
		D_t u(t,x) + H(x,D_x u(t,x)) = 0, &\qquad t>0,\, x\in M, \\
		u(0,x) = \phi(x), &\qquad x\in M.
	\end{cases}
\end{equation}
We call an absolutely continuous curve $\gamma:I\to M$ a $(u,H)$-calibrated curve if
\begin{align*}
	u(t_2,\gamma(t_2))=u(t_1,\gamma(t_1))+\int_{t_1}^{t_2}L(\gamma(s),\dot{\gamma}(s))\ ds,\quad \forall [t_1,t_2]\subset I,
\end{align*}
which is equivalent to
\begin{align*}
	T_{t_2}^-\phi(\gamma(t_2))&=T_{t_1}^-\phi(\gamma(t_1))+A_{t_2-t_1}(\gamma(t_1),\gamma(t_2)),\\
	A_{t_2-t_1}(\gamma(t_1),\gamma(t_2))&=\int_{t_1}^{t_2}L(\gamma(s),\dot{\gamma}(s)).
\end{align*}
The following proposition collects the properties of calibrated curves that will be used throughout the paper.

\begin{Pro}[\protect{\cite[Chapter 6]{Cannarsa_Sinestrari_book}, \protect{\cite[Lemma 3.3]{Cannarsa_Cheng3}}}]\label{pro:cali}
	Let $M$ be a compact connected smooth manifold without boundary, $H$ a Tonelli Hamiltonian on $M$, $\phi\in\SCL(M)$, and $u$ the viscosity solution of \eqref{eq:HJephi pre}. Then the following statements hold.
	\begin{enumerate}[\rm (1)]
		\item For every $(t,x)\in(0,+\infty)\times M$ there exists a $(u,H)$-calibrated curve $\gamma:[0,t]\to M$ with $\gamma(t)=x$.
		\item Every $(u,H)$-calibrated curve $\gamma:[0,T]\to M$ is of class $C^2$, and the pair $(\gamma,p)$ with dual arc $p(s)=L_v(\gamma(s),\dot\gamma(s))$ satisfies the Hamiltonian system
		\begin{equation}
			\dot\gamma(s)=H_p(\gamma(s),p(s)),\qquad \dot p(s)=-H_x(\gamma(s),p(s)),\qquad \forall s\in[0,T].
		\end{equation}
		\item There exist constants $C_1>0$ and $C_2>0$, depending on the local $C^2$ modulus and the Tonelli modulus of $H$ and the Lipschitz and semiconcavity constants of $\phi$, such that every $(u,H)$-calibrated curve satisfies $|\dot\gamma(s)|\leqslant C_1$ and $|\ddot\gamma(s)|\leqslant C_2$ for all $s$ in its domain.
		\item For every $(u,H)$-calibrated curve $\gamma:[0,T]\to M$ and every $s\in[0,T)$, the function $u(s,\cdot)$ is differentiable at $\gamma(s)$. Moreover, we have $p(s)=L_v(\gamma(s),\dot\gamma(s))=D_xu(s,\gamma(s))$ and $p(T)\in D^+_xu(T,\gamma(T))$.
	\end{enumerate}
\end{Pro}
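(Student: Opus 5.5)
The statement collects classical facts, and I would prove the four items in order. Items (1) and (2) come from Tonelli's direct method. Items (3) and (4) come from first-variation and touching arguments that combine Proposition~\ref{pro:touching} with the semiconcavity of $A_t$. I work in local coordinates where needed; $M$ is compact throughout.

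For (1), fix $(t,x)$. The map $y\mapsto\phi(y)+A_t(y,x)$ is continuous on the compact manifold $M$, so the infimum defining $T^-_t\phi(x)$ is attained at some $y_0$. By Tonelli's theorem (superlinearity (H1) and convexity (H2) of $L$), $A_t(y_0,x)$ is attained by an absolutely continuous minimizer $\gamma:[0,t]\to M$ from $y_0$ to $x$. Calibration on every subinterval $[t_1,t_2]$ then follows by splitting the action. Suppose strict inequality held on some piece. Then either replacing that piece by a cheaper curve, or using $u(t_1,\gamma(t_1))\leqslant\phi(y_0)+\int_0^{t_1}L$ together with the semigroup property, would give a competitor with value below $T^-_t\phi(x)$, which is impossible.

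For (2), $\gamma$ minimizes the action with fixed endpoints. I would use the standard Tonelli regularity argument to show $\gamma$ is Lipschitz, for instance via the Du Bois-Reymond/Clarke--Vinter argument under (H1)--(H2). Once $\gamma$ is Lipschitz, the Euler--Lagrange equation holds in integrated form. Since $L\in C^2$ with $L_{vv}>0$, bootstrapping gives $\gamma\in C^2$. Setting $p=L_v(\gamma,\dot\gamma)$ and applying the Legendre transform converts Euler--Lagrange into the Hamiltonian system.

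For (4), fix $s<T$. Calibration gives $u(s,y)\geqslant u(T,\gamma(T))-A_{T-s}(y,\gamma(T))$ for all $y$, with equality at $y=\gamma(s)$. The right-hand side is locally semiconvex in $y$, because $A_{T-s}(\cdot,\gamma(T))$ is semiconcave, and $u(s,\cdot)$ is semiconcave. Proposition~\ref{pro:touching} then shows that $u(s,\cdot)$ is differentiable at $\gamma(s)$ with
\[
D_xu(s,\gamma(s))=-D_yA_{T-s}(\gamma(s),\gamma(T)).
\]
The first variation of the action at the initial endpoint gives $-D_yA=L_v(\gamma(s),\dot\gamma(s))=p(s)$. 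At $s=T$ there is a one-sided touching instead:
\[
u(T,z)\leqslant u(s',\gamma(s'))+A_{T-s'}(\gamma(s'),z),
\]
with equality at $z=\gamma(T)$. The right-hand side is semiconcave with superdifferential containing $L_v(\gamma(T),\dot\gamma(T))$ by the endpoint first variation, so $p(T)\in D^+_xu(T,\gamma(T))$.

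For (3), I would bound the momentum at the initial time and propagate the bound. The same touching at $s=0$, with $\phi$ semiconcave, shows that $\phi$ is differentiable at $\gamma(0)$ with $p(0)=D\phi(\gamma(0))$, so $|p(0)|\leqslant\operatorname{Lip}(\phi)$. The Hamiltonian is conserved along the flow, so $H(\gamma(s),p(s))=H(\gamma(0),p(0))\leqslant\max\{H(z,q):|q|\leqslant\operatorname{Lip}(\phi)\}$. By (H1), $|p(s)|$ is then bounded by a constant depending only on $g_H$, the $C^0$ size of $H$ on a compact set, and $\operatorname{Lip}(\phi)$. This gives $|\dot\gamma|=|H_p|\leqslant C_1$. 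Differentiating $\dot\gamma=H_p(\gamma,p)$ and using the $C^2$ bounds of $H$ on this compact set gives $|\ddot\gamma|\leqslant C_2$.

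The main obstacle is that these constants must be independent of $T$ and of the curve. The energy-conservation route above is what secures this, since it never uses the length of the time interval. The step that needs the most care is the initial-time touching argument that makes $\phi$ differentiable at $\gamma(0)$, because $A_t$ may degenerate as $t\to0$. There I would use Proposition~\ref{pro:At} on short times, or simply restrict to a subinterval of positive length.
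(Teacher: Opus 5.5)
Your proposal is correct and follows the classical argument behind the results the paper cites; the paper gives no proof of its own. You use Tonelli existence plus action splitting for (1), Lipschitz regularity and bootstrapping for (2), and, for (4), touching the semiconcave $u(s,\cdot)$ from below by the semiconvex $u(T,\gamma(T))-A_{T-s}(\cdot,\gamma(T))$ together with the first variation. For the $T$-independent bounds in (3), you use conservation of $H$ along $(\gamma,p)$ starting from $|p(0)|\leqslant\mathrm{Lip}(\phi)$.

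\textbf{Missing step in (3).} Item (3) is claimed for every calibrated curve, including curves defined on $[a,b]$ with $a>0$. The paper relies on such curves, e.g.\ for $\tilde\gamma:[t_0,t]\to\T^n$ in the proof of Theorem~\ref{the:propagate alex}. Your argument, however, starts from $p(0)=D\phi(\gamma(0))$. To close this, take a calibrated curve $\eta:[0,a]\to M$ with $\eta(a)=\gamma(a)$, given by (1), and concatenate it with $\gamma$. The concatenation is calibrated on $[0,b]$ by the same splitting argument as in (1). By (2) it is then $C^2$ and its dual arc solves the Hamiltonian system, so your energy bound applies to it and hence to $\gamma$.

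\textbf{The initial-time step is not delicate.} Your concern about $A_t$ degenerating at the initial time is unnecessary. For fixed $T>0$, the semiconcavity of $A_T(\cdot,\gamma(T))$ is used only qualitatively. In any case, the transversality condition $p(0)\in D^-\phi(\gamma(0))$ alone already gives $|p(0)|\leqslant\mathrm{Lip}(\phi)$, without needing differentiability of $\phi$ at $\gamma(0)$.
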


If $\phi$ is a viscosity solution of the stationary equation \eqref{eq:HJs}, we also call such $\gamma$ a $(\phi,H)$-calibrated curve.  For $(t,x)\in(0,\infty)\times M$ we denote by $D^*_xu(t,x)$ the \emph{calibrated subdifferential} of $u(t,\cdot)$ at $x$, i.e.\ the set of momenta $p=L_v(\gamma(t),\dot\gamma(t))$ of $(u,H)$-calibrated curves $\gamma$ with $\gamma(t)=x$. It is known that $D^+_xu(t,x)=\operatorname{co}D^*_xu(t,x)$ (see \cite[Chapter 6]{Cannarsa_Sinestrari_book}).

We define the set of \emph{regular points} and \emph{cut points} for the pair $(u,H)$ as
\begin{align*}
	\reg(u,H)=&\ \bigl\{\ (t,x)\in [0,+\infty)\times M:\exists\ \delta>0\text{ and a $(u,H)$-calibrated curve}\\
	&\ \gamma:[t,t+\delta]\to M\ \text{with}\ \gamma(t) = x\ \bigr\},\\
	\cut(u,H)=&\ \big([0,+\infty)\times M\big)\setminus\reg(u,H).
\end{align*}

\begin{defn}[Cut time function]\label{defn:tau_phi}
	Let $\phi\in \text{\rm SCL}(M)$, $H:T^*M\to\R$ be a Tonelli Hamiltonian, and $u$ be the viscosity solution of Cauchy problem \eqref{eq:HJephi pre}. We define the \emph{cut time function} of $\phi$ as (see also \cite{Cannarsa_Cheng_Hong2025})
	\begin{align*}
		\tau_{\phi,H}(x) = \sup\bigl\{\ t\geqslant0: \exists\  \text{a $(u,H)$-calibrated curve}\ \gamma:[0,t]\to M\ \text{with}\ \gamma(0) = x\ \bigr\},\quad x\in M,
	\end{align*}
	 and the sup-level set of $\tau_{\phi,H}$ as
	\begin{align*}
		E(\phi,H,t)=\{x\in M: \tau_{\phi,H}(x)\geqslant t\ \},\qquad t\geqslant0.\ 
	\end{align*}
	We then introduce the sets
	\begin{align*}
		\reg(\phi,H)=\bigl\{\ x\in M:\tau_{\phi,H}(x)>0\ \bigr\}, \quad
		\cut(\phi,H)=\bigl\{\ x\in M:\tau_{\phi,H}(x)=0\ \bigr\},
	\end{align*} 
	which are called the set of \emph{regular points} and \emph{cut points} for the pair $(\phi,H)$, respectively.
\end{defn}
From the definitions above, we have the relation
\begin{equation}\label{eq:cut u rel}
	\reg(u,H)=\bigcup_{t\geqslant0}\{t\}\times\reg(u(t,\cdot),H),\qquad\cut(u,H)=\bigcup_{t\geqslant0}\{t\}\times\cut(u(t,\cdot),H).
\end{equation}

We shall use the following elementary uniqueness fact for calibrated curves from regular points.

\begin{Lem}\label{lem:uniq cali}
	Let $M$ be a compact connected smooth manifold without boundary, $H$ a Tonelli Hamiltonian on $M$, $\phi\in\SCL(M)$, and $u$ the viscosity solution of \eqref{eq:HJephi pre}. If $x\in\reg(\phi,H)$ and $t\in(0,\tau_{\phi,H}(x)]$ for $\tau_{\phi,H}(x)<+\infty$, or $t\in(0,+\infty)$ for $\tau_{\phi,H}(x)=+\infty$, then there exists a unique $(u,H)$-calibrated curve $\gamma:[0,t]\to M$ with $\gamma(0)=x$, and it is given by
	\begin{align*}
		\gamma(s)=\pi\circ\Phi^s_H(x,D\phi(x)),\qquad \forall s\in[0,t],
	\end{align*}
	where $\pi:T^*M\to M$ is the canonical projection and $\Phi^s_H$ is the Hamiltonian flow of $H$.
\end{Lem}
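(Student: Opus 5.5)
Fix $x\in\reg(\phi,H)$ and an admissible time $t$. We first prove existence, then uniqueness, and finally identify the curve with the projected Hamiltonian flow.

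\emph{Existence.} If $t<\tau_{\phi,H}(x)$, Definition~\ref{defn:tau_phi} provides a calibrated curve $\gamma_1:[0,t_1]\to M$ with $\gamma_1(0)=x$ and $t_1\geqslant t$. Any restriction of a calibrated curve to a subinterval is again calibrated, so $\gamma_1|_{[0,t]}$ works. For $t=\tau_{\phi,H}(x)<+\infty$ we argue by compactness. Take $t_k\uparrow t$ and calibrated curves $\gamma_k:[0,t_k]\to M$ with $\gamma_k(0)=x$. By Proposition~\ref{pro:cali}(3), the family $\{\gamma_k\}$ has uniformly bounded first and second derivatives. Arzel\`a--Ascoli (extending each $\gamma_k$ to $[0,t]$ by its Hamiltonian flow, or working on $[0,t-\varepsilon]$ and using a diagonal argument) gives a $C^1$ limit $\gamma:[0,t]\to M$. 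The identity $u(t_2,\gamma(t_2))=u(t_1,\gamma(t_1))+\int_{t_1}^{t_2}L$ passes to the limit, by continuity of $u$ and $C^1$ convergence with $L$ continuous. Hence $\gamma$ is calibrated on $[0,t)$, and by continuity also on $[0,t]$.

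\emph{Uniqueness and the formula.} The key fact is that every calibrated curve $\gamma:[0,t]\to M$ with $\gamma(0)=x$ and $t>0$ has initial momentum $p(0)=D\phi(x)$. By Proposition~\ref{pro:cali}(4) applied at $s=0$ (where $u(0,\cdot)=\phi$), $\phi$ is differentiable at $x$ and $p(0)=L_v(x,\dot\gamma(0))=D_xu(0,x)=D\phi(x)$. If that item is only stated for $s>0$, we prove this directly. Calibration gives, for every $y$ and small $s>0$,
\[
\phi(y)+A_s(y,\gamma(s))\geqslant u(s,\gamma(s))=\phi(x)+A_s(x,\gamma(s)).
\]
So $\psi(y):=\phi(x)+A_s(x,\gamma(s))-A_s(y,\gamma(s))$ touches $\phi$ from below at $x$. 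By Proposition~\ref{pro:At}(1), $y\mapsto A_s(y,\gamma(s))$ is semiconcave near $x$ (initial-endpoint version), so $\psi$ is semiconvex with linear modulus. Proposition~\ref{pro:touching} then yields differentiability of $\phi$ at $x$ and $D\phi(x)=-D_yA_s(x,\gamma(s))=L_v(x,\dot\gamma(0))$, using the standard first-variation formula for the action along the minimizer $\gamma|_{[0,s]}$.

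Once $p(0)=D\phi(x)$ is known, Proposition~\ref{pro:cali}(2) says $(\gamma,p)$ solves Hamilton's equations with initial data $(x,D\phi(x))$. Since $H\in C^2$, the Hamiltonian vector field is $C^1$, so its flow is unique and $(\gamma(s),p(s))=\Phi^s_H(x,D\phi(x))$ on $[0,t]$. Thus $\gamma(s)=\pi\circ\Phi^s_H(x,D\phi(x))$, and any two calibrated curves from $x$ on $[0,t]$ coincide.

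The main obstacle is the endpoint case $t=\tau_{\phi,H}(x)$: the compactness limit and the verification that calibration survives at the closed endpoint. Uniqueness makes this easier, because all the $\gamma_k$ are restrictions of the single flow curve $s\mapsto\pi\Phi^s_H(x,D\phi(x))$. So the limit is simply that curve on $[0,t]$, and we only need calibration there. This follows from the calibration identity on $[t_1,t_2]$ with $t_2<t$ by continuity of $u$ and of the integral in $t_2$.
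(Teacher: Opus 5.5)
Your proposal is correct and follows essentially the same route as the paper. Existence for $t<\tau_{\phi,H}(x)$ comes from the definition by restriction, and uniqueness together with the flow formula comes from Proposition~\ref{pro:cali}~(2) and~(4); part~(4) already covers $s=0$, so your touching-based backup for $p(0)=D\phi(x)$ is not needed. The endpoint case $t=\tau_{\phi,H}(x)$ is handled, as you yourself observe, by passing the calibration identity to the limit along the single flow curve, which makes the Arzel\`a--Ascoli detour superfluous.
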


\begin{proof}
	Existence for $t<\tau_{\phi,H}(x)$ holds by Definition \ref{defn:tau_phi}. Let $\gamma:[0,t]\to M$ be any $(u,H)$-calibrated curve with $\gamma(0)=x$. By Proposition \ref{pro:cali} (2) and (4), the pair $(\gamma,p)$ with dual arc $p(s)=L_v(\gamma(s),\dot\gamma(s))$ solves the Hamiltonian system and satisfies $p(0)=D\phi(x)$. Hence $\gamma(s)=\pi\circ\Phi^s_H(x,D\phi(x))$ by uniqueness of the Hamiltonian flow.
	
	It remains to treat $t=\tau_{\phi,H}(x)<+\infty$. Choose increasing times $t_k\uparrow t$ and, for each $k$, a $(u,H)$-calibrated curve on $[0,t_k]$ with initial point $x$. By what we have just proved, each of these curves coincides with the trajectory $\gamma(s)=\pi\circ\Phi^s_H(x,D\phi(x))$. This trajectory extends to the limiting time $t$, and the calibration identity
		\[
		u(t_k,\gamma(t_k))-\phi(x)=\int_0^{t_k}L(\gamma(s),\dot\gamma(s))\,ds
		\]
		passes to the limit by continuity, since $u$ is continuous on $[0,+\infty)\times M$ and $\gamma$ is of class $C^2$ on $[0,t]$. Thus a positive finite cut time is attained, the extended curve is calibrated on $[0,t]$, and uniqueness on $[0,t]$ follows by restriction to $[0,t_k]$. This completes the proof.
\end{proof}

It is well known that if $\phi$ is a viscosity solution of \eqref{eq:HJs}, $\tau_{\phi,H}$ is upper-semicontinuous (\cite{Cannarsa_Cheng_Fathi2021}). We now extend this result to any semiconcave function with linear modulus.
 
\begin{Pro}\label{pro:tau-usc}
	Let $\phi\in\SCL(M)$ and $H:T^*M\to\R$ be a Tonelli Hamiltonian. Then
	\begin{enumerate}[\rm (1)]
		\item $E(\phi,H,t)$ is compact for any $t\geqslant0$.
		\item $\tau_{\phi,H}$ is upper semicontinuous on $M$.
	\end{enumerate}
\end{Pro}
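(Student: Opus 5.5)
Part (2) is equivalent to (1), since upper semicontinuity of $\tau_{\phi,H}$ means exactly that every sup-level set $E(\phi,H,t)$ is closed. As $M$ is compact, closed subsets are compact, so the whole task is to show that $E(\phi,H,t)$ is closed. For $t=0$ we have $E(\phi,H,0)=M$ and there is nothing to prove, so fix $t>0$.

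Take $x_k\in E(\phi,H,t)$ with $x_k\to x$. We want a $(u,H)$-calibrated curve on $[0,s]$ starting at $x$ for every $s<t$; this gives $\tau_{\phi,H}(x)\geqslant t$.

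\emph{Choosing curves.} Fix $s<t$. For each $k$ we have $\tau_{\phi,H}(x_k)\geqslant t>s$, so there is a calibrated curve $\gamma_k:[0,s]\to M$ with $\gamma_k(0)=x_k$. Since $x_k$ is regular, Lemma \ref{lem:uniq cali} identifies $\gamma_k(\sigma)=\pi\circ\Phi^\sigma_H(x_k,D\phi(x_k))$. In particular $\phi$ is differentiable at $x_k$ and $p_k:=D\phi(x_k)$ is the initial momentum.

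\emph{Compactness and passage to the limit.} By Proposition \ref{pro:cali}(3), the curves $\gamma_k$ have uniformly bounded $C^1$ and $C^2$ norms. By Arzelà--Ascoli, a subsequence converges in $C^1([0,s])$ to a $C^1$ curve $\gamma$ with $\gamma(0)=x$. The momenta $p_k$ are bounded by the Lipschitz constant of $\phi$, so along a further subsequence $p_k\to p$. By upper semicontinuity of $D^+\phi$, the limit satisfies $p\in D^+\phi(x)$.

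For every $[s_1,s_2]\subset[0,s]$, the calibration identity
\[
u(s_2,\gamma_k(s_2))-u(s_1,\gamma_k(s_1))=\int_{s_1}^{s_2}L(\gamma_k,\dot\gamma_k)\,d\sigma
\]
passes to the limit. The left side converges because $u$ is continuous on $[0,\infty)\times M$, including $s_1=0$ where $u(0,\cdot)=\phi$. The right side converges because $\gamma_k\to\gamma$ in $C^1$ and $L$ is continuous. Hence $\gamma$ is $(u,H)$-calibrated on $[0,s]$ with $\gamma(0)=x$. Alternatively, continuous dependence of the Hamiltonian flow gives $\gamma(\sigma)=\pi\circ\Phi^\sigma_H(x,p)$, which in particular shows that $\gamma$ is $C^2$.

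\emph{Conclusion.} Since $s<t$ was arbitrary, $\tau_{\phi,H}(x)\geqslant t$, so $x\in E(\phi,H,t)$ and the set is closed.

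The main point needing care is the initial time. Calibration must hold at $s_1=0$, so we need continuity of $u$ up to $t=0$ with $u(0,\cdot)=\phi$, which follows from the representation $u=T^-_t\phi$ with $\phi$ Lipschitz. Equivalently, one checks that $u(\sigma,\gamma_k(\sigma))\to\phi(x_k)$ uniformly in $k$ as $\sigma\to0$. This follows from the uniform Lipschitz bound on $u$ on $[0,T]\times M$, available since $\phi$ is Lipschitz.

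The remaining steps are routine. Note that the argument never needs to know that $\phi$ is differentiable at the limit point $x$. That differentiability follows afterwards from Proposition \ref{pro:cali}(4) whenever $t>0$.
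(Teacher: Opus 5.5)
Your proof is correct and follows the paper's argument: uniform $C^2$ bounds from Proposition \ref{pro:cali}(3), Arzel\`a--Ascoli, and passage to the limit in the calibration identity using continuity of $u$ up to time $0$. The one difference is that you work on $[0,s]$ with $s<t$, so you never need the supremum in $\tau_{\phi,H}(x_k)$ to be attained, whereas the paper takes curves on $[0,t]$ directly, which is justified by Lemma \ref{lem:uniq cali}; your identification of $\gamma_k$ via the Hamiltonian flow and the limit momentum $p$ are harmless but not needed.
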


\begin{proof}
	$E(\phi,H,0)=M$ is compact. For $t>0$, we consider any sequence $\{x_k\}\subset E(\phi,H,t)$ such that $x_k\to x$ in $M$. For each $k$, pick a $(u,H)$-calibrated curve $\gamma_k:[0,t]\to M$ with $\gamma_k(0)=x_k$. By Proposition~\ref{pro:cali} (3), the curves $\gamma_k$ satisfy the uniform bound $|\ddot{\gamma}_k|\leqslant C$, so $\{(\gamma_k,\dot{\gamma}_k)\}$ is equi-Lipschitz.  By the Arzel\`a--Ascoli Theorem, a further subsequence, which we also denote by $\{(\gamma_k,\dot{\gamma}_k)\}$, converges uniformly on $[0,t]$ to a curve $\gamma:[0,t]\to M$ with $\gamma(0)=x$. Each $\gamma_k$ satisfies
	\begin{align*}
		u(t,\gamma_k(t))-\phi(x_k)=\int_0^t L(\gamma_k,\dot\gamma_k)\,ds.
	\end{align*}
	Passing to the limit and using the uniform convergence $(\gamma_k,\dot{\gamma}_k)\to(\gamma,\dot{\gamma})$, we obtain
	\begin{align*}
		u(t,\gamma(t))-\phi(x)=\int_0^t L(\gamma,\dot\gamma)\,ds.
	\end{align*}
	Hence $\gamma$ is $(u,H)$-calibrated on $[0,t]$, and $\tau_{\phi,H}(x)\geqslant t$, that is, $x\in E(\phi,H,t)$. Therefore, $E(\phi,H,t)$ is compact. This completes the proof of (1). Assertion (2) follows directly from (1).
\end{proof}

\section{Cut locus via touching approach}

\subsection{Cut locus of semiconcave functions}

Let $\Omega\subset\R^n$ be an open and convex subset. We first give the definition of regular points and cut points of semiconcave functions with linear modulus on $\Omega$, by an intuitive touching approach.

\begin{defn}\label{defn:various_sets}
	Let \( \Omega \subset \R^n \) be an open convex subset and $\phi\in\text{\rm SCL}\,(\Omega)$.
	\begin{enumerate}[(1)]

	\item We call $x\in\Omega$ a \emph{regular point} of $\phi$ if there exists a convex open neighborhood $V$ of $x$ and a semiconvex function $\psi: V \to \R$ with linear modulus touching $\phi$ from below at $x$ in $V$. We denote by $\reg(\phi)$ the set of regular points of $\phi$ and call it the \emph{regular set} of \( \phi \).
		
	\item We call $x\in\Omega$ a \emph{cut point} of $\phi$ if $x$ is not a regular point of $\phi$. We denote by $\cut(\phi)$ the set of cut points of $\phi$ and call it the \emph{cut locus} of $\phi$, that is, 
	\begin{align*}
	\cut(\phi)= \Omega \setminus \reg(\phi).
	\end{align*}
	\end{enumerate}
\end{defn}

\begin{Lem}\label{lem:smooth touching}
Let \( \Omega \subset \R^n \) be an open convex subset and $\phi\in\text{\rm SCL}\,(\Omega)$. If $x\in\reg(\phi)$, then \( \phi \) is differentiable at \( x \) and there exists an open neighborhood \( V \) of \( x \) and a \( C^\infty \) function
\begin{align*}
	\psi(y)=\phi(x)+D\phi(x)\cdot(y-x)-\frac{C}{2}|y-x|^2,\quad y\in V,
\end{align*}
where $C>0$ is a constant, such that
\begin{align*}
	\psi(y)<\phi(y), \quad \forall y\in V\setminus\{x\}, \quad \text{and} \quad \psi(x) = \phi(x).
\end{align*}
\end{Lem}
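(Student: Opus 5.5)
Since $x\in\reg(\phi)$, Definition~\ref{defn:various_sets} gives a convex open neighborhood $V_0$ of $x$ and a semiconvex function $\psi_0:V_0\to\R$ with linear modulus touching $\phi$ from below at $x$ in $V_0$. The restriction $\phi|_{V_0}$ is still semiconcave with linear modulus on the convex set $V_0$. Proposition~\ref{pro:touching} therefore applies on $V_0$, and it gives that $\phi$ and $\psi_0$ are both differentiable at $x$ with $D\psi_0(x)=D\phi(x)$. This settles the first assertion. It remains to replace $\psi_0$ by a strict quadratic lower barrier.

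Let $C_0\geqslant0$ be a semiconvexity constant for $\psi_0$, so that $-\psi_0$ is $C_0$-semiconcave on $V_0$. Set $p=D\phi(x)$. Then $-p=D(-\psi_0)(x)\in D^+(-\psi_0)(x)$, since at a point of differentiability the superdifferential is the singleton $\{D(-\psi_0)(x)\}$. Applying Proposition~\ref{pro:scl fund} to $-\psi_0$ on $V_0$ (raising $C_0$ to a positive constant if it is zero) gives
\[
-\psi_0(y)\leqslant -\psi_0(x)-p\cdot(y-x)+\frac{C_0}{2}|y-x|^2,\qquad y\in V_0 .
\]
Equivalently, $\psi_0(y)\geqslant \phi(x)+p\cdot(y-x)-\frac{C_0}{2}|y-x|^2$, where we used $\psi_0(x)=\phi(x)$.

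Now take any $C>C_0$, and let $V$ be an open ball around $x$ contained in $V_0$. Define $\psi(y)=\phi(x)+p\cdot(y-x)-\frac C2|y-x|^2$; this function is $C^\infty$. For $y\in V\setminus\{x\}$ we have
\[
\psi(y)<\phi(x)+p\cdot(y-x)-\frac{C_0}{2}|y-x|^2\leqslant\psi_0(y)\leqslant\phi(y),
\]
and $\psi(x)=\phi(x)$ holds by construction. This gives the strict touching claimed in the statement.

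I do not expect a genuine obstacle in this argument. The only point that needs care is that Propositions~\ref{pro:touching} and~\ref{pro:scl fund} are stated on a convex open domain. They must be applied on the convex neighborhood $V_0$ rather than on $\Omega$, with the semiconcavity constants taken relative to $V_0$.
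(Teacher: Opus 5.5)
Your proof is correct and follows the paper's argument. Proposition~\ref{pro:touching} gives differentiability and $D\psi_0(x)=D\phi(x)$, Proposition~\ref{pro:scl fund} applied to $-\psi_0$ gives the quadratic lower bound, and any $C>C_0$ (the paper takes $C=C_0+1$) makes the inequality strict away from $x$; your care in applying both propositions on the convex neighborhood $V_0$ rather than on $\Omega$ is correct and is left implicit in the paper.
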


\begin{proof}
Given \( x \in \reg(\phi) \), there exists an open neighborhood \( V \) of \( x \) and a semiconvex function \( \tilde{\psi}:V\to\R \) with linear modulus \( C_1>0 \) such that $\tilde{\psi}\leqslant\phi$ on $V$ and $\tilde{\psi}(x) = \phi(x)$. Invoking Proposition \ref{pro:touching}, we conclude that both \( \phi \) and \( \tilde{\psi} \) are differentiable at \( x \) and satisfy \( D\phi(x) = D\tilde{\psi}(x) \). Now, let $C=C_1+1$. By Proposition \ref{pro:scl fund}, we have
\begin{align*}
	\psi(y):&=\phi(x)+D\phi(x)\cdot(y-x)-\frac{C}{2}|y-x|^2\\
	&= \tilde{\psi}(x) + D\tilde{\psi}(x) \cdot (y-x) - \frac{C_1+1}{2} |y-x|^2<\tilde{\psi}(y) \leqslant \phi(y), \quad \forall y \in V\setminus\{x\},
\end{align*}
and $\psi(x)=\phi(x)$.
\end{proof}

We recall a standard notion from nonsmooth analysis. For $\phi\in\text{\rm SCL}\,(\Omega)$, the \emph{proximal subdifferential} of $\phi$ is defined by
\begin{align*}
	\partial_P \phi(x)=\big\{p\in\R^n:\exists\ C,r>0\ \text{s.t.}\ \phi(y)\geqslant\phi(x)+\langle p,y-x\rangle-\frac{C}{2}|y-x|^2,\ \forall y\in B(x,r)\big\},\quad x\in\Omega.
\end{align*}

The following Proposition \ref{pro:prox} implies a point is regular if and only if the proximal subdifferential of $\phi$ at that point is nonempty. We use the touching terminology in Definition \ref{defn:various_sets} to emphasize the connection with semiconvex lower supports and the Lax--Oleinik operators.
\begin{Pro}\label{pro:prox}
	Let \( \Omega \subset \R^n \) be an open convex subset and $\phi\in\text{\rm SCL}\,(\Omega)$. Then
	\begin{align*}
		\reg(\phi)=\big\{x\in\Omega:\partial_P \phi(x)\neq\varnothing\big\},\qquad \cut(\phi)=\big\{x\in\Omega:\partial_P \phi(x)=\varnothing\big\}.
	\end{align*}
	Moreover, for any $x\in\reg(\phi)$, $\phi$ is differentiable at $x$ and $\partial_P \phi(x)=\{D\phi(x)\}$.
\end{Pro}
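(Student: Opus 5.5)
Since $\reg(\phi)$ and $\cut(\phi)$ are complementary in $\Omega$, it suffices to prove $\reg(\phi)=\{x\in\Omega:\partial_P\phi(x)\neq\varnothing\}$ and, for regular $x$, the identity $\partial_P\phi(x)=\{D\phi(x)\}$. We prove the two inclusions separately and then identify the proximal subdifferential.

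\emph{Regular points have nonempty proximal subdifferential.} Let $x\in\reg(\phi)$. Lemma~\ref{lem:smooth touching} gives that $\phi$ is differentiable at $x$, together with an open neighborhood $V$ of $x$ and a constant $C>0$ such that
\[
\phi(y)\geqslant\phi(x)+D\phi(x)\cdot(y-x)-\tfrac C2|y-x|^2,\qquad y\in V.
\]
Choose $r>0$ with $B(x,r)\subset V$. The definition of $\partial_P\phi(x)$ then holds with $p=D\phi(x)$ and these constants $C,r$. Hence $D\phi(x)\in\partial_P\phi(x)$.

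\emph{Nonempty proximal subdifferential gives regularity.} Suppose $p\in\partial_P\phi(x)$ with constants $C,r>0$. Take $V=B(x,r)$, which is open and convex, and set
\[
\psi(y)=\phi(x)+\langle p,y-x\rangle-\tfrac C2|y-x|^2 .
\]
This $\psi$ is a concave quadratic, so $-\psi$ is convex plus $\tfrac C2|\cdot|^2$. Thus $-\psi$ is $C$-semiconcave, i.e. $\psi$ is semiconvex with linear modulus $C$. By construction $\psi(x)=\phi(x)$ and $\psi\leqslant\phi$ on $V$, so $\psi$ touches $\phi$ from below at $x$ in $V$. Therefore $x\in\reg(\phi)$.

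\emph{Identification of $\partial_P\phi(x)$ at regular points.} Apply Proposition~\ref{pro:touching} on the convex set $V$ to the function $\psi$ built from any $p\in\partial_P\phi(x)$. It gives that $\phi$ is differentiable at $x$ and $p=D\psi(x)=D\phi(x)$. Alternatively, one can argue directly: $\phi$ has both an upper quadratic support (Proposition~\ref{pro:scl fund}) and a lower quadratic support at $x$, and squeezing forces $p=D\phi(x)$. Combined with the first step, this yields $\partial_P\phi(x)=\{D\phi(x)\}$.

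No step is a genuine obstacle. The only point needing care is a technicality: Proposition~\ref{pro:touching} is stated on a convex domain with $\phi$ semiconcave there. This holds because $\phi\in\SCL(\Omega)$ restricts to a semiconcave function on the ball $V\subset\Omega$.
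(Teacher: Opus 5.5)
Your proof is correct and follows the paper's argument. As in the paper, Lemma~\ref{lem:smooth touching} gives $D\phi(x)\in\partial_P\phi(x)$ at regular points. A proximal subgradient produces a concave quadratic lower support, which serves directly as a semiconvex touching function. Proposition~\ref{pro:touching} then forces every proximal subgradient to equal $D\phi(x)$. Your extra remarks spell out what the paper leaves implicit: you check that the quadratic is semiconvex, restrict to a convex ball, and note the alternative squeeze against the upper support from Proposition~\ref{pro:scl fund}.
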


\begin{proof}
	For $x\in\reg(\phi)$, Lemma \ref{lem:smooth touching} implies $\phi$ is differentiable at $x$ and $D\phi(x)\in\partial_P \phi(x)$. If there exists another $p\in\partial_P \phi(x)$, by Proposition \ref{pro:touching}, we have $p=D\phi(x)$. Thus $\partial_P \phi(x)=\{D\phi(x)\}$. For $x\in\Omega$ with $\partial_P \phi(x)\neq\varnothing$, we obtain $x\in\reg(\phi)$ directly from Definition \ref{defn:various_sets}. Hence $\reg(\phi)=\big\{x\in\Omega:\partial_P \phi(x)\neq\varnothing\big\}$, and consequently $\cut(\phi)=\big\{x\in\Omega:\partial_P \phi(x)=\varnothing\big\}$. This completes the proof.
\end{proof}

The following Proposition \ref{pro:touching_diff_reg} gives three equivalent characterizations of regular points, in terms of touching, linear estimation, and the $C^{1,1}$ property, respectively.
\begin{Pro}\label{pro:touching_diff_reg}
	Let \( \Omega \subset \R^n \) be an open convex subset and $\phi\in\text{\rm SCL}\,(\Omega)$. For any $x\in\Omega$, the following statements are equivalent.
	\begin{enumerate}[(i)]
		\item $x\in\reg(\phi)$.
		\item $\phi$ is differentiable at $x$, and there exists an open neighborhood \( V \) of \( x \) and $C>0$ such that
		\begin{equation}\label{eq:cut equi 1}
			|\phi(y)-\phi(x)-D\phi(x)\cdot(y-x)|\leqslant \frac{C}{2}|y-x|^2,\quad \forall y\in V.
		\end{equation}
		\item $\phi$ is differentiable at $x$, and there exists an open neighborhood \( V \) of \( x \) and $\lambda>0$ such that
		\begin{align*}
			|p-D\phi(x)|\leqslant\lambda|y-x|,\quad \forall y\in V,\ p\in D^+\phi(y).
		\end{align*}
	\end{enumerate}
\end{Pro}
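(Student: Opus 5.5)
We prove the cycle (i)$\Rightarrow$(ii)$\Rightarrow$(iii)$\Rightarrow$(i), working on a small ball $V=B(x,r)\subset\Omega$ so that all functions live on a convex set.

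\emph{(i)$\Rightarrow$(ii).} If $x\in\reg(\phi)$, Lemma \ref{lem:smooth touching} shows that $\phi$ is differentiable at $x$ and gives a neighborhood $V$ and $C_1>0$ with $\phi(y)\geqslant\phi(x)+D\phi(x)\cdot(y-x)-\frac{C_1}{2}|y-x|^2$ on $V$. Since $D^+\phi(x)=\{D\phi(x)\}$, Proposition \ref{pro:scl fund} gives the upper bound $\phi(y)\leqslant\phi(x)+D\phi(x)\cdot(y-x)+\frac{C_0}{2}|y-x|^2$, where $C_0$ is the semiconcavity constant. Taking $C=\max\{C_0,C_1\}$ yields \eqref{eq:cut equi 1}.

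\emph{(ii)$\Rightarrow$(iii).} This is the main step. Shrink $V$ to a ball $B(x,2r)$ on which \eqref{eq:cut equi 1} holds, and fix $y\in B(x,r)$ and $p\in D^+\phi(y)$. By Proposition \ref{pro:scl fund}, for every $z$ with $|z-y|\leqslant\rho:=|y-x|$ (so $z\in B(x,2r)$),
\[
\phi(z)\leqslant\phi(y)+p\cdot(z-y)+\tfrac{C_0}{2}|z-y|^2 .
\]
Write $q=D\phi(x)$. Expanding $\phi(z)$ and $\phi(y)$ around $x$ using the two-sided bound \eqref{eq:cut equi 1} gives
\[
(q-p)\cdot(z-y)\leqslant \tfrac{C}{2}\bigl(|z-x|^2+|y-x|^2\bigr)+\tfrac{C_0}{2}|z-y|^2 .
\]
Choose $z=y+\rho e$ with $e=(q-p)/|q-p|$. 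Then $|z-x|\leqslant2\rho$, and we obtain $\rho|p-q|\leqslant\frac{5C+C_0}{2}\rho^2$, that is, $|p-q|\leqslant\lambda|y-x|$ with $\lambda=(5C+C_0)/2$. The only point needing care is that $z$ must stay in the region where both estimates hold; the factor $2$ in the radius takes care of this. (Alternatively one could invoke the argument behind Proposition \ref{pro:Alex2}, but the direct computation is simpler.)

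\emph{(iii)$\Rightarrow$(i).} Let $V$ be a ball around $x$ and fix $y\in V$. Since $\phi$ is locally Lipschitz, it is differentiable almost everywhere, and on almost every segment from $x$ to $y$ (after a small perturbation of $y$, using continuity of $\phi$) we can write
\[
\phi(y)-\phi(x)-q\cdot(y-x)=\int_0^1\bigl(D\phi(x+s(y-x))-q\bigr)\cdot(y-x)\,ds .
\]
Because $D\phi(w)\in D^+\phi(w)$, hypothesis (iii) bounds the integrand by $\lambda s|y-x|^2$, so $|\phi(y)-\phi(x)-q\cdot(y-x)|\leqslant\frac{\lambda}{2}|y-x|^2$ on $V$. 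Alternatively, one can avoid the a.e.\ argument by using that $\phi$ restricted to the segment is semiconcave, so its one-sided derivatives exist everywhere and are given by $p\cdot(y-x)$ for some $p\in D^+\phi$ at the corresponding point; integrating these derivatives gives the same bound. In either case, $\psi(y)=\phi(x)+q\cdot(y-x)-\frac{\lambda}{2}|y-x|^2$ is smooth, hence semiconvex with linear modulus, and it touches $\phi$ from below at $x$ in $V$. Therefore $x\in\reg(\phi)$.
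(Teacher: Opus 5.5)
Your proof is correct and uses the same two computations as the paper. Your (ii)$\Rightarrow$(iii) is the perturbation argument of Lemma~\ref{lem:touching_C11}, with the quadratic lower bound from (ii) in place of the semiconvex lower support; the case $y=x$ is covered by $D^+\phi(x)=\{D\phi(x)\}$. Your second variant of (iii)$\Rightarrow$(i) is exactly the integral-representation argument of Lemma~\ref{lem:touching_2}.

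The only difference is organizational. You close the cycle (i)$\Rightarrow$(ii)$\Rightarrow$(iii)$\Rightarrow$(i), whereas the paper proves (ii)$\Rightarrow$(i) directly with the quadratic $\psi$ and obtains (i)$\Leftrightarrow$(iii) from the two lemmas. The paper states Lemma~\ref{lem:touching_C11} for a general semiconvex lower support because it is reused with $A_t$-based supports in Theorem~\ref{thm:1}.
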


\begin{proof}
	(i) $\Rightarrow$ (ii): On the one hand, Lemma \ref{lem:smooth touching} implies $\phi$ is differentiable at $x$ and there exists an open neighborhood $V$ of $x$ and $C_1>0$ such that
	\begin{equation}\label{eq:cut equi pf1}
		\phi(y)\geqslant\phi(x)+D\phi(x)\cdot(y-x)-\frac{C_1}{2}|y-x|^2,\quad \forall y\in V.
	\end{equation}
	On the other hand, $\phi\in\text{\rm SCL}\,(\Omega)$ implies there exists $C_2>0$ such that, by Proposition \ref{pro:scl fund},
	\begin{equation}\label{eq:cut equi pf2}
		\phi(y)\leqslant\phi(x)+D\phi(x)\cdot(y-x)+\frac{C_2}{2}|y-x|^2,\quad \forall y\in\Omega.
	\end{equation}
	Let $C=\max\{C_1,C_2\}$. Then \eqref{eq:cut equi 1} follows from \eqref{eq:cut equi pf1} and \eqref{eq:cut equi pf2}.
	
	(ii) $\Rightarrow$ (i): We let the function
	\begin{align*}
		\psi(y)=\phi(x)+D\phi(x)\cdot(y-x)-\frac{C}{2}|y-x|^2,\quad y\in V.
	\end{align*}
	Then $\psi$ is of class $C^{\infty}$, and \eqref{eq:cut equi 1} implies $\psi\leqslant\phi$ on $V$ and $\psi(x)=\phi(x)$.
	
	(i) $\Leftrightarrow$ (iii): It is a direct consequence of the following Lemma \ref{lem:touching_C11} and Lemma \ref{lem:touching_2}.
\end{proof}

\begin{Lem}\label{lem:touching_C11}
	Let $x\in\R^n$, $r>0$ and $\phi\in\text{\rm SCL}\,(B(x,r))$ with linear modulus $C_1>0$, and let $\psi:B(x,r)\to\R$ be a semiconvex function with linear modulus $C_2>0$. If $\psi$ touches $\phi$ from below at $x$ in $B(x,r)$, then we have the following properties:
	\begin{enumerate}[\rm (1)]
		\item both $\phi$ and $\psi$ are differentiable at $x$ and $D\psi(x)=D\phi(x)$.
		\item for $C=\max\{C_1,C_2\}$ we have that
		\begin{align*}
			|p-D\phi(x)|\leqslant 3C|y-x|,\qquad \forall y\in B(x,\frac r2), p\in D^+\phi(y);\\
			|p-D\psi(x)|\leqslant 3C|y-x|,\qquad \forall y\in B(x,\frac r2), p\in D^-\psi(y).
		\end{align*}
	\end{enumerate}
\end{Lem}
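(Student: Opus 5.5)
Part (1) is exactly Proposition~\ref{pro:touching}, so all the work is in (2). The idea is to sandwich $\phi$ and $\psi$ between two quadratics with the same first-order part. Set $p_0=D\phi(x)=D\psi(x)$ and $\ell(y)=\phi(x)+\langle p_0,y-x\rangle$. By Proposition~\ref{pro:scl fund} applied to $\phi$, and the analogous statement for the semiconvex $\psi$ (applied to $-\psi$, with $p_0\in D^-\psi(x)$), we get for all $y\in B(x,r)$
\begin{equation*}
\ell(y)-\tfrac{C}{2}|y-x|^2\leqslant\psi(y)\leqslant\phi(y)\leqslant \ell(y)+\tfrac{C}{2}|y-x|^2 .
\end{equation*}
Here the middle inequality is the touching hypothesis. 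Thus both $|\phi-\ell|$ and $|\psi-\ell|$ are bounded by $\frac C2|y-x|^2$ on $B(x,r)$.

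Now fix $y\in B(x,r/2)$ with $y\neq x$ and $p\in D^+\phi(y)$, and write $d=|y-x|$ and $q=p-p_0$. Apply Proposition~\ref{pro:scl fund} at $y$: for every $z\in B(x,r)$,
\begin{equation*}
\phi(z)\leqslant\phi(y)+\langle p,z-y\rangle+\tfrac C2|z-y|^2 .
\end{equation*}
Take $z=y+h e$ with $e=q/|q|$ (nothing to prove if $q=0$) and $h=d$. Then $|z-x|\leqslant 2d<r$, so $z$ lies in the ball. Insert the lower bound for $\phi(z)$ and the upper bound for $\phi(y)$ from the sandwich. The $\ell$ terms combine, since $\ell(z)-\ell(y)=\langle p_0,z-y\rangle$, and we get
\begin{equation*}
h|q|=\langle q,z-y\rangle\leqslant \tfrac C2|z-x|^2+\tfrac C2|y-x|^2+\tfrac C2 h^2\leqslant \tfrac C2(4d^2+d^2+d^2)=3Cd^2 .
\end{equation*}
Dividing by $h=d$ gives $|q|\leqslant 3C d$, which is the first estimate.

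The estimate for $\psi$ is symmetric. For $p\in D^-\psi(y)$ we have $\psi(z)\geqslant\psi(y)+\langle p,z-y\rangle-\frac C2|z-y|^2$. Choose $z=y-h e$ with $e=(p-p_0)/|p-p_0|$, and use the upper bound for $\psi(z)$ and the lower bound for $\psi(y)$. The same computation gives the constant $3C$.

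The only delicate point is the bookkeeping: the step $h$ must keep $z$ inside $B(x,r)$, which is why $y$ is restricted to $B(x,r/2)$, and the optimal choice $h=d$ is what yields exactly $3C$. Everything else is a direct combination of Proposition~\ref{pro:scl fund} with the touching inequality.
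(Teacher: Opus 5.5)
Your strategy matches the paper's: a quadratic sandwich around $\ell$ at $x$ (from touching and Proposition~\ref{pro:scl fund} applied to $\phi$ and $-\psi$), the one-sided bound at $y$, and a test point at distance $d=|y-x|$ from $y$. However, the test step points the wrong way in both estimates, so the key inequality does not follow as written.

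For $\phi$, chain your three bounds honestly:
\[
\ell(z)-\tfrac C2|z-x|^2\leqslant\phi(z)\leqslant\phi(y)+\langle p,z-y\rangle+\tfrac C2|z-y|^2\leqslant\ell(y)+\tfrac C2|y-x|^2+\langle p,z-y\rangle+\tfrac C2|z-y|^2 .
\]
This gives
\[
\langle p_0-p,\,z-y\rangle\leqslant\tfrac C2\bigl(|z-x|^2+|y-x|^2+|z-y|^2\bigr).
\]
The left side is $\langle -q,z-y\rangle$, not $\langle q,z-y\rangle$. With your choice $z=y+he$, $e=q/|q|$, the left side equals $-h|q|$, so the inequality is vacuous and the claimed step $h|q|=\langle q,z-y\rangle\leqslant\cdots$ is unjustified.

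You must step against $q$, taking $z=y-he$. This is exactly the paper's choice $\theta=|y-x|\,(D\phi(x)-p)/|D\phi(x)-p|$. The reason: the upper bound at $y$ only conflicts with the lower sandwich in directions where $p$ predicts less growth than $p_0$, that is, along $p_0-p$.

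The $\psi$ case has the mirror error. There the chain yields $\langle p-p_0,\,z-y\rangle\leqslant\tfrac C2\bigl(|z-x|^2+|y-x|^2+|z-y|^2\bigr)$. So you need $z=y+he$ with $e=(p-p_0)/|p-p_0|$, not $z=y-he$.

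The bound $|z-x|\leqslant 2d<r$ holds for either sign, so once both directions are flipped your computation goes through unchanged. It gives $|q|\leqslant 3Cd$ and coincides with the paper's proof.
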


\begin{proof}
	Proposition \ref{pro:touching} implies both \( \phi \) and \( \psi \) are differentiable at \( x \) and \( D\phi(x) = D\psi(x) \). Now, we turn to the proof of (2). Fix $y\in B(x,\frac r2)$, $p\in D^+\phi(y)$, and $\theta\in\R^n$ with $|\theta|<\frac r2$. By the $C_1$-semiconcavity of $\phi$ and Proposition \ref{pro:scl fund}, we have
	\begin{align*}
		\phi(y)\leqslant&\,\phi(x)+D\phi(x)\cdot(y-x)+\frac{C_1}2|y-x|^2,\\
		\phi(y+\theta)\leqslant&\,\phi(y)+p\cdot\theta+\frac{C_1}2|\theta|^2.
	\end{align*}
	The $C_2$-semiconvexity of $\psi$, Proposition \ref{pro:scl fund} and the touching assumption yields
	\begin{align*}
		D\phi(x)\cdot(y+\theta-x)-\frac{C_2}2|y+\theta-x|^2\leqslant\psi(y+\theta)-\psi(x)\leqslant\phi(y+\theta)-\phi(x).
	\end{align*}
	Combining the inequalities above we obtain
	\begin{equation}\label{eq:c11 pf}
		(D\phi(x)-p)\cdot\theta\leqslant\frac{C_1}2|y-x|^2+\frac{C_1}2|\theta|^2+\frac{C_2}2|y+\theta-x|^2.
	\end{equation}
    Without loss of generality we assume $p\not=D\phi(x)$. Then for $\theta=\frac{|y-x|}{|D\phi(x)-p|}(D\phi(x)-p)$ in \eqref{eq:c11 pf}, we conclude that
	\begin{align*}
		|D\phi(x)-p|\cdot|y-x|\leqslant&\,\frac{C_1}2|y-x|^2+\frac{C_1}2|y-x|^2+\frac{C_2}2\big|y-x+\frac{|y-x|}{|D\phi(x)-p|}(D\phi(x)-p)\big|^2\\
		\leqslant&\,3C|y-x|^2.
	\end{align*}
	Observe that $y\not=x$ since $p\not=D\phi(x)$. This leads to the first inequality in (2). The second one follows by the same argument with the roles of $\phi$ and $\psi$ exchanged.
\end{proof}

\begin{Lem}\label{lem:touching_2}
	Let \( \Omega \subset \R^n \) be an open convex subset and $\phi\in\text{\rm SCL}\,(\Omega)$. If $\phi$ is differentiable at $x\in\Omega$ and there exists $\lambda>0$ such that
	\begin{align*}
		|p-D\phi(x)|\leqslant \lambda|y-x|,\qquad\forall y\in\Omega, p\in D^+\phi(y).
	\end{align*}
	Then the function
	\begin{align*}
		\psi(y)=\phi(x)+D\phi(x)\cdot(y-x)-\frac{\lambda}{2}|y-x|^2,\qquad y\in\Omega,
	\end{align*}
	is of class $C^{\infty}$ and touches $\phi$ from below at $x$ in $\Omega$.
\end{Lem}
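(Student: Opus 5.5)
The plan is to compare $\phi$ with its first-order expansion along the segment from $x$ to $y$, and to bound the derivative of the difference from below using the hypothesis on $D^+\phi$. Smoothness of $\psi$ is immediate, since $\psi$ is a quadratic polynomial, and $\psi(x)=\phi(x)$ holds by construction. So the only thing to prove is $\phi(y)\geqslant\psi(y)$ for every $y\in\Omega$.

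Fix $y\in\Omega$ and set $x_s=x+s(y-x)$ for $s\in[0,1]$; convexity of $\Omega$ guarantees that the whole segment lies in $\Omega$. Since $\phi$ is locally Lipschitz, the function $g(s)=\phi(x_s)$ is Lipschitz on $[0,1]$, hence absolutely continuous and differentiable almost everywhere. At any $s$ where $g$ is differentiable, I claim that $g'(s)=\langle p,y-x\rangle$ for every $p\in D^+\phi(x_s)$. To see this, use the touching inequality $\phi(z)\leqslant\phi(x_s)+\langle p,z-x_s\rangle+\frac{C}{2}|z-x_s|^2$ from Proposition~\ref{pro:scl fund}. Restricted to the line, it gives $g(s+h)-g(s)\leqslant h\langle p,y-x\rangle+O(h^2)$ for $h$ of either sign. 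Dividing by $h>0$ and by $h<0$ and letting $h\to0$ yields $g'(s)\leqslant\langle p,y-x\rangle$ and $g'(s)\geqslant\langle p,y-x\rangle$, so the two are equal.

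Now apply the hypothesis at $x_s$: $|p-D\phi(x)|\leqslant\lambda s|y-x|$. This gives
\[
g'(s)\geqslant \langle D\phi(x),y-x\rangle-\lambda s|y-x|^2 \quad\text{for a.e. } s\in[0,1].
\]
Integrating over $[0,1]$ then yields
\[
\phi(y)-\phi(x)\geqslant\langle D\phi(x),y-x\rangle-\frac{\lambda}{2}|y-x|^2,
\]
which is exactly $\phi(y)\geqslant\psi(y)$.

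The only step that is slightly delicate is identifying the a.e.\ derivative of $g$ with $\langle p,y-x\rangle$ for an arbitrary $p$ in the superdifferential. This is handled by the two-sided touching argument above, and otherwise the argument is routine.
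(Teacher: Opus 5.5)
Your proof is correct and follows the same route as the paper: integrate the derivative of $\phi$ along the segment from $x$ to $y$, and bound the integrand using the hypothesis at $x+s(y-x)$. The paper quotes the integral representation $\phi(y)=\phi(x)+\int_0^1\min_{p\in D^+\phi(x+r(y-x))}p\cdot(y-x)\,dr$ from Cannarsa--Sinestrari, whereas you derive the needed a.e.\ identity $g'(s)=\langle p,y-x\rangle$ directly from Proposition~\ref{pro:scl fund} by the two-sided touching argument, so your version is self-contained.
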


\begin{proof}
    Obviously, $\psi$ is of class $C^{\infty}$ and $\phi(x)=\psi(x)$. Given $y\in\Omega$. Due to the integral representation of semiconcave functions in terms of the superdifferential (see \cite[Chapter 3]{Cannarsa_Sinestrari_book}) and our assumption, it yields
\begin{align*}
	\phi(y)=&\,\phi(x)+\int^1_0\min_{p\in D^+\phi(x+r(y-x))}p\cdot(y-x)\ dr\\
	=&\,\phi(x)+D\phi(x)\cdot(y-x)+\int^1_0\min_{p\in D^+\phi(x+r(y-x))}(p-D\phi(x))\cdot(y-x)\ dr\\
	\geqslant&\,\phi(x)+D\phi(x)\cdot(y-x)-\int^1_0\lambda\cdot|x+r(y-x)-x|\cdot|y-x|\ dr\\
	=&\,\phi(x)+D\phi(x)\cdot(y-x)-\frac \lambda2|y-x|^2=\psi(y).
\end{align*}\qedhere
\end{proof}

Now we have the following relations 
\begin{Pro}\label{pro:singcutalex}
	Let \( \Omega \subset \R^n \) be an open convex subset and $\phi\in\text{\rm SCL}\,(\Omega)$. Then we have
	\begin{align*} \text{\rm Alex}\,(\phi)\subset\reg(\phi)\subset\text{\rm Diff}\,(\phi).
	\end{align*}
\end{Pro}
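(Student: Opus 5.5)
The proof splits into the two inclusions $\reg(\phi)\subset\text{Diff}\,(\phi)$ and $\text{Alex}\,(\phi)\subset\reg(\phi)$. Both follow directly from results already established.

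For the second inclusion, $\reg(\phi)\subset\text{Diff}\,(\phi)$, I would invoke Lemma \ref{lem:smooth touching} (equivalently Proposition \ref{pro:prox}). If $x\in\reg(\phi)$, a semiconvex function with linear modulus touches $\phi$ from below at $x$. Proposition \ref{pro:touching} then forces $\phi$ to be differentiable at $x$, so $x\in\text{Diff}\,(\phi)$.

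For the first inclusion, take $x\in\text{Alex}\,(\phi)$, so that $D\phi(x)$ and a symmetric matrix $A=D^2\phi(x)$ exist with
\begin{align*}
\phi(y)=\phi(x)+\langle D\phi(x),y-x\rangle+\tfrac12\langle A(y-x),y-x\rangle+o(|y-x|^2).
\end{align*}
I would verify condition (ii) of Proposition \ref{pro:touching_diff_reg}. Choose $r>0$ with $B(x,r)\subset\Omega$ and such that the remainder satisfies $|o(|y-x|^2)|\leqslant |y-x|^2$ for $|y-x|<r$. Then on $V=B(x,r)$,
\begin{align*}
|\phi(y)-\phi(x)-D\phi(x)\cdot(y-x)|\leqslant \bigl(\tfrac12\|A\|+1\bigr)|y-x|^2.
\end{align*}
This is \eqref{eq:cut equi 1} with $C=\|A\|+2$, and Proposition \ref{pro:touching_diff_reg} yields $x\in\reg(\phi)$.

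Alternatively, one can exhibit the touching function directly. The function $\psi(y)=\phi(x)+D\phi(x)\cdot(y-x)-\frac{C}{2}|y-x|^2$ is smooth, hence semiconvex, and it touches $\phi$ from below at $x$ on $V$. Proposition \ref{pro:Alex2} is not needed; only the pointwise second-order expansion is used.

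There is no real obstacle here. The only point requiring care is to turn the $o(|y-x|^2)$ remainder into a uniform quadratic bound on a fixed neighborhood, which is exactly what the choice of $r$ accomplishes.
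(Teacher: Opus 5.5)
Your proposal is correct and matches the paper's proof: Alex points satisfy condition (ii) of Proposition~\ref{pro:touching_diff_reg}, and $\reg(\phi)\subset\text{Diff}\,(\phi)$ follows from Proposition~\ref{pro:touching}. Your explicit choice of $r$ and $C=\|A\|+2$ just fills in the step the paper calls obvious.
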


\begin{proof}
For any $x\in\text{Alex}\,(\phi)$, it obviously satisfies (ii) in Proposition \ref{pro:touching_diff_reg}. This implies $\text{Alex}\,(\phi)\subset\reg(\phi)$. $\reg(\phi)\subset\text{Diff}\,(\phi)$ follows directly from Proposition \ref{pro:touching}.
\end{proof}

We can see that the relations in Proposition \ref{pro:singcutalex} are nontrivial from the following two examples. In particular, Example \ref{ex:singcut} gives a differentiable cut point.

\begin{Ex}\label{ex:singcut}
	Let
	\begin{align*}
		\phi(x)=-|x|^{\frac{3}{2}},\qquad x\in\R.
	\end{align*}
	Then $\phi\in\text{\rm SCL}\,(\R)$ since $\phi$ is obviously a concave function. Now we have
	\begin{align*}
		D\phi(x)=
		\begin{cases}
			\ \frac{3}{2}|x|^{\frac{1}{2}},\quad &x\in(-\infty,0),\\
			\ -\frac{3}{2}|x|^{\frac{1}{2}},\quad &x\in[0,+\infty).
		\end{cases}
	\end{align*}
	Thus, $\text{Diff}\,(\phi)=\R$, and Proposition \ref{pro:touching_diff_reg} implies $\reg(\phi)=\R\setminus\{0\}$.
\end{Ex}

\begin{Ex}
	Let
	\begin{align*}
		\phi(x)=
		\begin{cases}
			\ -\frac{1}{2}x^2,\qquad &x\in(-\infty,0),\\
			\ \frac{1}{2}x^2,\qquad &x\in[0,+\infty).
		\end{cases}
	\end{align*}
	Then
	\begin{align*}
		D\phi(x)=
		\begin{cases}
			\ -x,\quad &x\in(-\infty,0),\\
			\ x,\quad &x\in[0,+\infty),
		\end{cases}
		\qquad
		D^2\phi(x)=
		\begin{cases}
			\ -1,\quad &x\in(-\infty,0),\\
			\ 1,\quad &x\in(0,+\infty),
		\end{cases}
	\end{align*}
	and $\phi$ is not twice differentiable at $0$. Thus, $\phi\in C^{1,1}(\R)\subset\text{\rm SCL}\,(\R)$, and it is a viscosity solution of the Hamilton--Jacobi equation
	\begin{align*}
		|D\phi(x)|^2-x^2=0,\qquad x\in\R.
	\end{align*}
	Now we have $\text{Alex}\,(\phi)=\R\setminus\{0\}$, and Proposition \ref{pro:touching_diff_reg} implies $\reg(\phi)=\R$. 
\end{Ex}

\begin{Rem}
We remark that all the statements above in this subsection hold true for a smooth manifold $M$ without boundary, and a locally semiconcave function with linear modulus on $M$, since all the definitions and estimates are local in nature.
\end{Rem}

\subsection{Degree of regularity of semiconcave functions}
We now work on the $n$-dimensional flat torus $\T^n$, for brevity and to highlight the main ideas. All the results below extend to general compact manifolds without boundary by standard localization arguments.


Inspired by Proposition \ref{pro:touching_diff_reg}, we can have a finer characterization of $\reg(\phi)$ introduced previously for $\phi\in\text{\rm SCL}\,(\T^n)$.

\begin{Lem}\label{pro:reform_reg}
	Let $\phi\in\text{\rm SCL}\,(\T^n)$. Then
	\begin{align*}
		\reg(\phi)=\{x\in\T^n:D\phi(x) \text{ exists and } \exists\ \lambda_x\geqslant0\ \text{s.t.}\ |p-D\phi(x)|\leqslant\lambda_x\cdot  d(y,x),\forall y\in\T^n, p\in D^+\phi(y)\}.
	\end{align*}
\end{Lem}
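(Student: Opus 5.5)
Denote by $\mathcal R$ the right-hand side of the claimed identity. I will prove the two inclusions $\mathcal R\subset\reg(\phi)$ and $\reg(\phi)\subset\mathcal R$, reducing both to the local characterization (i)$\Leftrightarrow$(iii) in Proposition~\ref{pro:touching_diff_reg}. Throughout I use that $\T^n$ is compact and that $\phi\in\SCL(\T^n)$ is globally Lipschitz with some constant $\mathrm{Lip}(\phi)$. Then every $p\in D^+\phi(y)$ satisfies $|p|\leqslant \mathrm{Lip}(\phi)$, uniformly in $y$. Here $d$ is the flat distance, and on small balls (radius below the injectivity radius $1/2$) it coincides with the Euclidean distance in a lifted chart.

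For the inclusion $\mathcal R\subset\reg(\phi)$, take $x\in\mathcal R$ with constant $\lambda_x$. I restrict to a small ball $V=B(x,r)$, $r<1/2$, viewed in a local flat chart, so that $d(y,x)=|y-x|$ for $y\in V$. The hypothesis then gives $|p-D\phi(x)|\leqslant\lambda_x|y-x|$ for all $y\in V$ and $p\in D^+\phi(y)$. If $\lambda_x=0$, I replace it by $1$. Statement (iii) of Proposition~\ref{pro:touching_diff_reg} now holds, so $x\in\reg(\phi)$. Alternatively, Lemma~\ref{lem:touching_2} applied on the convex set $V$ produces the touching paraboloid directly.

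For the converse inclusion, take $x\in\reg(\phi)$. Proposition~\ref{pro:touching_diff_reg} (or Lemma~\ref{lem:touching_C11}) gives that $D\phi(x)$ exists, together with $r>0$ (with $r<1/2$) and $\lambda>0$ such that
\[
|p-D\phi(x)|\leqslant\lambda|y-x| \quad\text{for } y\in B(x,r),\ p\in D^+\phi(y).
\]
This is the local estimate. For the global estimate, take $y$ with $d(y,x)\geqslant r$; then
\[
|p-D\phi(x)|\leqslant 2\,\mathrm{Lip}(\phi)\leqslant \frac{2\,\mathrm{Lip}(\phi)}{r}\,d(y,x).
\]
Setting $\lambda_x=\max\{\lambda,\,2\,\mathrm{Lip}(\phi)/r\}$ gives the estimate for all $y\in\T^n$, so $x\in\mathcal R$.

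The only delicate point is bookkeeping rather than analysis. The superdifferential at $y$ must be identified across charts (on the flat torus this is canonical via the lift to $\R^n$). One must also make sure $d(y,x)=|y-x|$ on the chosen ball, which forces $r<1/2$. Beyond that, the proof is a direct globalization of the local criterion, using the uniform bound on $D^+\phi$ coming from compactness.
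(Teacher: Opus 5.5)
Your proof is correct and follows the paper's argument essentially step for step. The inclusion $\mathcal R\subset\reg(\phi)$ comes from Proposition~\ref{pro:touching_diff_reg}~(iii). For the converse, the local estimate on a small ball $B(x,r)$ is combined with the uniform Lipschitz bound on $D^+\phi$ to handle $d(y,x)\geqslant r$; the paper does the same with $\lambda_x=\max\{\lambda_{x,1},\lambda_{x,2}\}$, and your explicit bound $2\,\mathrm{Lip}(\phi)/r$ is just an estimate for its $\lambda_{x,2}$.
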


\begin{proof}
	In view of Proposition \ref{pro:touching_diff_reg}, the set on the right-side of the equality above is contained in $\reg(\phi)$. For the opposite direction let $x\in\reg(\phi)$. Thanks again to Proposition \ref{pro:touching_diff_reg}, $D\phi(x)$ exists and there exists $0<r<\frac 1{10}$ and $\lambda_{x,1}\geqslant0$ such that
	\begin{align*}
		|p-D\phi(x)|\leqslant\lambda_{x,1}|y-x|,\qquad\forall y\in B(x,r), p\in D^+\phi(y).
	\end{align*}
	We set
	\begin{align*}
		\lambda_{x,2}&=\sup\bigg\{\frac{|p-D\phi(x)|}{d(y,x)}:\ y\in\T^n\setminus B(x,r),\\
		&\qquad\qquad p\in D^+\phi(y)\bigg\}.
	\end{align*}
	Since $\phi$ is Lipschitz, $D^+\phi$ is uniformly bounded, hence $\lambda_{x,2}<+\infty$. Let $\lambda_x=\max\{\lambda_{x,1},\lambda_{x,2}\}$. Then we obtain the following inequality
	\begin{align*}
		|p-D\phi(x)|\leqslant\lambda_x\cdot d(y,x),\qquad \forall y\in\T^n, p\in D^+\phi(y).
	\end{align*}
	This completes the proof.
\end{proof}

\begin{defn}\label{def:R_phi}
	Let $\phi\in\text{\rm SCL}\,(\T^n)$. We define the function of \emph{degree of regularity} of $\phi$ as
	\begin{align*}
		R_\phi(x)=
		\begin{cases}
			\inf\{\lambda\geqslant0: |p-D\phi(x)|\leqslant\lambda\cdot d(y,x),\forall y\in\T^n, p\in D^+\phi(y)\},&x\in \reg(\phi),\\
			+\infty,&x\in\cut(\phi),
		\end{cases}
	\end{align*}
	and the sub-level set of $R_\phi$ as
	\begin{align*}
		F(\phi,\lambda)=\{x\in\T^n: R_\phi(x)\leqslant\lambda\},\qquad\lambda\geqslant0.
	\end{align*}
\end{defn}

\begin{Rem}
	In general, the $C^{1,1}$ property of regular points is local (Proposition \ref{pro:touching_diff_reg}). Here we can define $R_\phi(x)$ by imposing the estimate for every $y\in\T^n$ since $\T^n$ is compact (Lemma \ref{pro:reform_reg}). Thus, $R_\phi$ is a globally normalized regularity
	defect, rather than a purely local $C^{1,1}$ deviation. This is consistent with the truncation constants in Theorem \ref{thm:1} below.
\end{Rem}

\begin{Pro}\label{lem:R_phi}
	Let $\phi\in\text{\rm SCL}\,(\T^n)$. Then we have the following statements.
	\begin{enumerate}[\rm (1)]
		\item $\reg(\phi)=\{x\in\T^n: R_\phi(x)<+\infty\}$ and $\cut(\phi)=\{x\in\T^n: R_\phi(x)=+\infty\}$.
		\item $F(\phi,\lambda)$ is compact for any $\lambda\geqslant0$.
		\item $R_\phi$ is lower semi-continuous on $\T^n$.
	\end{enumerate}
\end{Pro}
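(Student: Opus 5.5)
Part (1) follows directly from Lemma \ref{pro:reform_reg} and Definition \ref{def:R_phi}. For $x\in\reg(\phi)$, Lemma \ref{pro:reform_reg} gives a finite $\lambda_x$ in the admissible set, so the infimum satisfies $R_\phi(x)\leqslant\lambda_x<+\infty$. For $x\in\cut(\phi)$ we have $R_\phi(x)=+\infty$ by definition. Hence $\{R_\phi<+\infty\}=\reg(\phi)$ and $\{R_\phi=+\infty\}=\cut(\phi)$. We also record that for $x\in\reg(\phi)$ the infimum is attained. The admissible set of $\lambda$ is closed under decreasing limits, because each constraint $|p-D\phi(x)|\leqslant\lambda\, d(y,x)$ is closed in $\lambda$. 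Therefore
\[
|p-D\phi(x)|\leqslant R_\phi(x)\,d(y,x)\qquad\text{for all } y\in\T^n,\ p\in D^+\phi(y).
\]

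Part (3) is the same statement as closedness of every sublevel set $F(\phi,\lambda)$. Since $\T^n$ is compact, closedness is equivalent to compactness, so (2) implies (3) and the main task is (2). Fix $\lambda\geqslant0$ and a sequence $x_k\in F(\phi,\lambda)$ with $x_k\to x$. Each $x_k$ is regular with $R_\phi(x_k)\leqslant\lambda$, so by the attained infimum
\[
|p-D\phi(x_k)|\leqslant\lambda\, d(y,x_k)\qquad\text{for all } y,\ p\in D^+\phi(y).
\]
Since $\phi$ is Lipschitz, the gradients $D\phi(x_k)$ are bounded, and we pass to a subsequence with $D\phi(x_k)\to q$. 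Upper semicontinuity of $x\rightrightarrows D^+\phi(x)$ gives $q\in D^+\phi(x)$.

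Next, letting $k\to\infty$ in the inequality above, with $y$ and $p$ fixed, yields
\[
|p-q|\leqslant\lambda\, d(y,x)\qquad\text{for all } y\in\T^n,\ p\in D^+\phi(y).
\]
Taking $y=x$ forces $D^+\phi(x)=\{q\}$. So $\phi$ is differentiable at $x$ with $D\phi(x)=q$, and the displayed bound is exactly the condition in Lemma \ref{pro:reform_reg} with $\lambda_x=\lambda$. Hence $x\in\reg(\phi)$ and $R_\phi(x)\leqslant\lambda$, so $F(\phi,\lambda)$ is closed and therefore compact.

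The only delicate point is identifying the limit gradient $q$ with $D\phi(x)$. We cannot assume differentiability at $x$ beforehand; it comes from the uniform Lipschitz-type bound at $y=x$, which collapses $D^+\phi(x)$ to a point. The argument also relies on $\lambda$ being uniform in $k$, so that the estimate survives the limit.
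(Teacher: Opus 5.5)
Your proof is correct and follows the paper's argument essentially step for step. You obtain (1) from Lemma~\ref{pro:reform_reg}; you prove (2) by extracting $D\phi(x_k)\to q\in D^+\phi(x)$, passing to the limit, and setting $y=x$ to force differentiability at $x$; and you deduce (3) from (2). Your explicit remark that the infimum defining $R_\phi(x)$ is attained is a useful addition, since the paper uses it tacitly when it writes the inequality with $\lambda$ for $x_i\in F(\phi,\lambda)$.
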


\begin{proof}
	Assertion (1) follows directly by the definition of $R_\phi$ and Lemma \ref{pro:reform_reg}.
	
	Now we turn to prove (2). For $\lambda\geqslant0$, choose a sequence $\{x_i\}\subset F(\phi,\lambda)$ such that $\lim_{i\to\infty}x_i=x\in\T^n$. Then
	\begin{align*}
		|p-D\phi(x_i)|\leqslant\lambda\cdot d(y,x_i),\qquad\forall y\in\T^n, p\in D^+\phi(y), i\in\N.
	\end{align*}
	Since $D^+\phi$ is uniformly bounded and upper-semicontinuous, we can take a subsequence $\{x_{i_k}\}$ such that $\lim_{k\to\infty}D\phi(x_{i_k})=q\in D^+\phi(x)$. Now we have
	\begin{align*}
		|p-q|\leqslant\lambda\cdot d(y,x),\qquad\forall y\in\T^n, p\in D^+\phi(y).
	\end{align*}
	For $y=x$, from the inequality above we conclude that $D^+\phi(x)=\{q\}$ is a singleton, i.e., $\phi$ is differentiable at $x$ and $D\phi(x)=q$. It follows that
	\begin{align*}
		|p-D\phi(x)|\leqslant\lambda\cdot d(y,x),\qquad\forall y\in\T^n, p\in D^+\phi(y).
	\end{align*}
	In other words, $x\in F(\phi,\lambda)$. This implies $F(\phi,\lambda)$ is compact, and completes the proof of (2). Assertion (3) is a direct consequence of (2).
\end{proof}
 

The following Theorem \ref{thm:1} clarifies the relation between the degree of regularity $R_\phi$ (Definition~\ref{def:R_phi}) and the cut time function $\tau_{\phi,H}$ (Definition~\ref{defn:tau_phi}).

\begin{The}\label{thm:1}
	Let $H:\T^n\times\R^n\to\R$ be a Tonelli Hamiltonian and
	$\phi\in\SCL(\T^n)$. Then there exist positive constants
	$C_{1,1},C_{1,2},C_{2,1},C_{2,2}$
	(depending on the local $C^2$ modulus and the Tonelli modulus of $H$ and the Lipschitz and semiconcavity constants of $\phi$) such that
	\begin{enumerate}[\rm (1)]
		\item $R_\phi(x)\leqslant\max\Big\{\, \frac{C_{1,1}}{\tau_{\phi,H}(x)},\ C_{1,2}\ \Big\},\quad \forall x\in \T^n,$
		\item $\tau_{\phi,H}(x) \geqslant \min\Big\{\, \frac{C_{2,1}}{R_\phi(x)},\ C_{2,2}\ \Big\},\quad \forall x\in \T^n,$
	\end{enumerate}
where we let $\frac{1}{0}=+\infty$ and $\frac{1}{+\infty}=0$.
\end{The}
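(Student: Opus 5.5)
The plan is to compare $\phi$ near $x$ with the functions $z\mapsto u(t,y)-A_t(z,y)$. These are lower supports of $\phi$ coming from the Lax--Oleinik representation, and Proposition~\ref{pro:At} controls their semiconcavity and semiconvexity at scale $1/t$. Throughout, $u=T^-_t\phi$. $L_\phi$ and $C_\phi$ denote the Lipschitz and semiconcavity constants of $\phi$. $C_1$ is the velocity bound of Proposition~\ref{pro:cali}(3). We fix $R:=2C_1+1$ (enlarged below if needed), so that every calibrated curve of duration $t$ stays inside $B(\cdot,Rt)$.

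\emph{Part (1).} If $\tau_{\phi,H}(x)=0$, then $R_\phi(x)=+\infty$ and $\max\{C_{1,1}/0,\,C_{1,2}\}=+\infty$, so there is nothing to prove. Otherwise set $t:=\min\{\tau_{\phi,H}(x),\,t_R/2\}>0$. By Lemma~\ref{lem:uniq cali} there is a calibrated $\gamma:[0,t]\to\T^n$ with $\gamma(0)=x$; put $y=\gamma(t)$. Calibration together with $u(t,y)\leqslant \phi(z)+A_t(z,y)$ shows that
\[
\psi(z):=u(t,y)-A_t(z,y)
\]
touches $\phi$ from below at $x$. Since $|x-y|\leqslant C_1t$, the ball $B(x,r)$ with $r:=(R-C_1)t$ lies in $B(y,Rt)$. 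On this ball, Proposition~\ref{pro:At}(1) makes $\psi$ semiconvex with constant $K_R/t$. Lemma~\ref{lem:touching_C11} then gives, for $d(z,x)<r/2$,
\[
|p-D\phi(x)|\leqslant 3\max\{C_\phi,K_R/t\}\,d(z,x).
\]
For $d(z,x)\geqslant r/2$ we only use $|p-D\phi(x)|\leqslant 2L_\phi\leqslant \frac{4L_\phi}{(R-C_1)t}\,d(z,x)$. Hence $R_\phi(x)\leqslant C'/t$ for some $C'$, and because $t\leqslant t_R/2$ the term $3C_\phi$ is absorbed into $C'/t$. If $t=\tau_{\phi,H}(x)$, this is the bound $C_{1,1}/\tau_{\phi,H}(x)$. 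If $t=t_R/2$ (this includes $\tau_{\phi,H}(x)=+\infty$), it gives the constant $C_{1,2}:=2C'/t_R$.

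\emph{Part (2).} Let $\lambda:=R_\phi(x)<\infty$; the case $\lambda=+\infty$ is trivial. By Lemma~\ref{lem:touching_2} (via Lemma~\ref{pro:reform_reg}),
\[
\psi(z)=\phi(x)+D\phi(x)\cdot(z-x)-\tfrac{\lambda}{2}|z-x|^2\leqslant \phi(z)\qquad\text{for all } z .
\]
Let $\gamma(s)=\pi\Phi^s_H(x,D\phi(x))$. Since $|D\phi(x)|\leqslant L_\phi$, its speed is bounded by a constant $C_3$; we enlarge $R$ so that $R\geqslant 2C_3+2C_1$. Set $t:=\min\{K(R)/(2\lambda),\,\tau_0(R)\}$ and $y=\gamma(t)$. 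We claim $u(t,y)=\phi(x)+A_t(x,y)$. This makes $\gamma|_{[0,t]}$ calibrated and gives $\tau_{\phi,H}(x)\geqslant t$; we shrink $\tau_0$ further if needed so that $\gamma|_{[0,t]}$ is the action minimizer from $x$ to $y$. The inequality $\leqslant$ is trivial. For $\geqslant$, every minimizer $z$ in $u(t,y)=\inf_z\{\phi(z)+A_t(z,y)\}$ is the initial point of a calibrated curve, so $z\in B(y,C_1t)\subset B(y,Rt)$. On $B(y,Rt)$ the function $\psi+A_t(\cdot,y)$ is $C^2$ with Hessian at least $(K(R)/t-\lambda)I\geqslant \frac{K(R)}{2t}I>0$ by Proposition~\ref{pro:At}(2), hence strictly convex. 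Its gradient at $x$ vanishes, because
\[
D_xA_t(x,y)=-L_v(x,\dot\gamma(0))=-D\phi(x)=-D\psi(x).
\]
Thus $x\in B(y,C_3t)$ is its global minimizer on the ball, and
\[
u(t,y)\geqslant \inf_{B(y,Rt)}\{\psi+A_t(\cdot,y)\}=\phi(x)+A_t(x,y).
\]
This yields $C_{2,1}=K(R)/2$ and $C_{2,2}=\tau_0(R)$.

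\emph{Main obstacle.} The delicate step is the first-variation identity $D_xA_t(x,y)=-L_v(x,\dot\gamma(0))$ in Part (2). It requires that, for $t\leqslant\tau_0(R)$ and $|y-x|\leqslant C_3t$, the Hamiltonian arc $\gamma$ is the unique minimizer joining $x$ to $y$, so that $A_t(\cdot,y)$ is differentiable at $x$ with gradient given by the initial momentum of $\gamma$. I would obtain this from the short-time diffeomorphism property of $v\mapsto\pi\Phi^t_H(x,L_v(x,v))$ on $\{|v|\leqslant R\}$, which underlies the $C^2$ statement of Proposition~\ref{pro:At}(2). A secondary bookkeeping point is making the choices of $R$, $t_R$, $\tau_0$ mutually consistent, so that all the balls used lie where Proposition~\ref{pro:At} applies.
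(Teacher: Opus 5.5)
Your proposal is correct and follows the paper's proof essentially step by step. In (1) you use the lower support $z\mapsto u(t,y)-A_t(z,y)$ with Proposition~\ref{pro:At}(1), Lemma~\ref{lem:touching_C11} near $x$ and the Lipschitz bound away from $x$; in (2) you compare the quadratic support from Lemma~\ref{lem:touching_2} with $A_t(\cdot,y)$ through the Hessian bound of Proposition~\ref{pro:At}(2) and the localization of minimizers. The first-variation identity you flag as the main obstacle is exactly what the paper imports from \cite[Proposition 5.2]{CCHW2024}, and your small variations are harmless: taking $t=\min\{\tau_{\phi,H}(x),t_R/2\}$ directly, taking $\lambda=R_\phi(x)$ since the infimum is attained, and using the factor $1/2$ for strict convexity.
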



\begin{proof}
	(1) Let $S\geqslant0$ be a semiconcavity constant of $\phi$ and let $P$ be a Lipschitz constant of $\phi$ on $\T^n$. Let $V>0$ be a uniform speed bound for all $(u,H)$-calibrated curves, supplied by Proposition~\ref{pro:cali} (3). Fix $R=V+2$ and obtain $K_R,t_R$ from Proposition~\ref{pro:At} (1). Choose a fixed $t_*\in(0,\min\{t_R,1\}]$ small enough for all the coordinate balls used below to be valid. These constants are independent of $x_0$.

Suppose $\tau_{\phi,H}(x_0)>0$, choose
\[
0<t<\min\{\tau_{\phi,H}(x_0),t_*\},
\]
and let $\gamma:[0,t]\to\T^n$ be a $(u,H)$-calibrated curve with $\gamma(0)=x_0$. Set $y_t=\gamma(t)$. Since $d(x_0,y_t)\leqslant Vt$, we have
\[
B(x_0,t)\subset B(y_t,Rt).
\]
Calibration gives
\[
\phi(x_0)+A_t(x_0,y_t)=\min_{z\in\T^n}\{\phi(z)+A_t(z,y_t)\}.
\]
Thus the function
\[
\psi_t(z)=\phi(x_0)+A_t(x_0,y_t)-A_t(z,y_t)
\]
touches $\phi$ from below at $x_0$. On $B(x_0,t)$ it is semiconvex with constant $K_R/t$. Lemma~\ref{lem:touching_C11} therefore yields
\begin{equation}\label{eq:near}
|p-D\phi(x_0)|\leqslant3\max\{S,K_R/t\}\,d(z,x_0),\qquad \forall z\in B(x_0,t/2),\ p\in D^+\phi(z).
\end{equation}
For $z\notin B(x_0,t/2)$, boundedness of the superdifferential gives
\begin{equation}\label{eq:far}
|p-D\phi(x_0)|\leqslant2P\leqslant\frac{4P}{t}\,d(z,x_0),\qquad \forall p\in D^+\phi(z).
\end{equation}
Taking
\[
C=\max\{1,3St_*,3K_R,4P\},
\]
we conclude from \eqref{eq:near}--\eqref{eq:far} that
\[
R_\phi(x_0)\leqslant\frac Ct,\qquad \forall 0<t<\min\{\tau_{\phi,H}(x_0),t_*\}.
\]
Letting $t\uparrow\min\{\tau_{\phi,H}(x_0),t_*\}$ gives
\[
R_\phi(x_0)\leqslant\max\left\{\frac{C}{\tau_{\phi,H}(x_0)},\frac{C}{t_*}\right\}.
\]
This remains valid when $\tau_{\phi,H}(x_0)=+\infty$, with the usual convention $1/(+\infty)=0$. If $\tau_{\phi,H}(x_0)=0$, the asserted bound is automatic. Hence Theorem~\ref{thm:1} (1) follows with $C_{1,1}=C$ and $C_{1,2}=C/t_*$. This completes the proof of (1).
	
	(2) For any $x_0\in\T^n$, if $R_{\phi}(x_0)=+\infty$, then assertion (2) obviously holds. Now we consider the case $R_{\phi}(x_0)<+\infty$. Choose any $\lambda\in(R_\phi(x_0),+\infty)$. By Lemma \ref{lem:touching_2} the $C^\infty$-function
	\begin{equation}\label{eq:1}
		\psi(x)=\phi(x_0)+D\phi(x_0)\cdot(x-x_0)-\frac {\lambda}2|x-x_0|^2,\qquad x\in B(x_0,1/100),
	\end{equation}
	touches $\phi$ from below at $x_0$. It follows that
	\begin{equation}\label{eq:2}
		\phi(x_0) - \psi(x_0)=0 \leqslant \phi(x) - \psi(x), \quad  \forall x\in B(x_0,1/100).
	\end{equation}
	Define a Hamiltonian trajectory
	\begin{align*}
		(\gamma(s), p(s))=\Phi^s_H(x_0, D\phi(x_0)),\quad s\in [0,1],
	\end{align*}
	where \( \Phi^s_H \) is the Hamiltonian flow of \( H \). Set
	\begin{align*}
		r_1=\sup\{|H_p(\Phi^s_H(x,p))|: x\in\T^n, p\in D^+\phi(x), s\in[0,1]\}.
	\end{align*}
	The global Lipschitz property of $\phi$ implies $r_1<+\infty$. Then, for \( t \in [0,1] \), we have \( x_0 \in B(\gamma(t),r_1 t) \). By \cite[Proposition 5.2]{CCHW2024}, there exists \( 0 < \tau_1 \leqslant 1 \), independent of $x_0$, such that for \( 0 < t \leqslant \tau_1 \)
	\begin{align*}
		\int_0^t L(\gamma(s), \dot{\gamma}(s))\, ds = A_t(x_0, \gamma(t)),
	\end{align*}
	and the semiconvex function $A_t(\cdot, \gamma(t))$ is differentiable at $x_0$ and
	\begin{equation} \label{eq:3}
		D_x A_t(x_0, \gamma(t)) = -D\phi(x_0).
	\end{equation}
	By Proposition~\ref{pro:cali} (3), there exists $r_2>0$ such that
	\begin{equation}\label{eq:4}
		\arg\min_{x\in\T^n}\{\phi(x)+A_t(x,y)\}\subset B(y,r_2t),\qquad\forall t>0, y\in\T^n.
	\end{equation}
	Indeed, if $\hat{x}$ is a minimizer of $x\mapsto\phi(x)+A_t(x,y)$, then any $A_t$-minimizing curve from $\hat{x}$ to $y$ is a $(u,H)$-calibrated curve, so that $d(\hat{x},y)\leqslant C_1 t$ by Proposition~\ref{pro:cali} (3).
	Take $r_3=\max\{r_1,r_2\}$. By Proposition~\ref{pro:At} (2) with $R=r_3$, there exist $\tau_2\in(0,1]$ and $K_4>0$ such that for any $t\in(0,\tau_2]$ and $y\in\T^n$, the function $x\mapsto A_t(x,y)$ is of class $C^2$ on $B(y,r_3t)$ and satisfies
	\begin{equation}\label{eq:5}
		D_{xx}A_t(x,y)\geqslant\frac{K_4}tId,\qquad\forall x\in B(y,r_3t).
	\end{equation}
	Now setting
	\begin{equation}\label{eq:6}
		C_{2,1}=K_4,\quad C_{2,2}=\min\{\tau_1,\tau_2,\frac 1{200r_3}\},\quad \tau=\min\{\frac{C_{2,1}}{\lambda},C_{2,2}\},
	\end{equation}
	and combining \eqref{eq:1}, \eqref{eq:2}, \eqref{eq:3}, \eqref{eq:5} and \eqref{eq:6}, we conclude that for any $x\in B(\gamma(\tau),r_3\tau)$
	\begin{align*}
		\phi(x_0)-\phi(x)\leqslant&\,\phi(x_0)-\psi(x)=-D\phi(x_0)\cdot(x-x_0)+\frac{\lambda}{2}|x-x_0|^2\\
		=&\,D_xA_\tau(x_0,\gamma(\tau))\cdot(x-x_0)+\frac{\lambda}{2}|x-x_0|^2\\
		\leqslant&\,D_xA_\tau(x_0,\gamma(\tau))\cdot(x-x_0)+\frac {K_4}{2\tau}|x-x_0|^2\\
		\leqslant&\,A_\tau(x,\gamma(\tau))-A_\tau(x_0,\gamma(\tau)).
	\end{align*}
	Together with \eqref{eq:4} it follows that
	\begin{align*}
		\phi(x_0)+A_\tau(x_0,\gamma(\tau))=\inf_{x\in\T^n}\{\phi(x)+A_\tau(x,\gamma(\tau))\}.
	\end{align*}
	This implies $\tau_{\phi,H}(x_0)\geqslant\tau=\min\{\frac{C_{2,1}}{\lambda},C_{2,2}\}$. Let $\lambda\to R_\phi(x_0)$, we have $\tau_{\phi,H}(x_0)\geqslant\min\{\frac{C_{2,1}}{R_\phi(x_0)},C_{2,2}\}$. This completes the proof of (2).
\end{proof}

The following Theorem \ref{thm:Reg_Cut} shows that the two definitions of regular point and cut point, via the touching approach and the variational approach, are equivalent. In other words, the cut locus is an analytic notion associated to the semiconcave function itself, independent of the choice of the Hamiltonian.

\begin{The}\label{thm:Reg_Cut}
Let \( \phi\in\text{\rm SCL}(\T^n) \) and $H:\T^n\times\R^n\to\R$ be a Tonelli Hamiltonian. Then we have
\begin{align*}
	\reg(\phi,H) = \reg(\phi), \qquad
	\cut(\phi,H) = \cut(\phi).
\end{align*}
In other words,
\begin{align*}
	\reg(\phi,H)=\big\{x\in\T^n:\partial_P \phi(x)\neq\varnothing\big\},\qquad \cut(\phi,H)=\big\{x\in\T^n:\partial_P \phi(x)=\varnothing\big\}.
\end{align*}
\end{The}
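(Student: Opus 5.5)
The plan is to derive Theorem~\ref{thm:Reg_Cut} directly from the two-sided quantitative estimate of Theorem~\ref{thm:1}, combined with the characterization of $\reg(\phi)$ through $R_\phi$ in Proposition~\ref{lem:R_phi}~(1) and the proximal description in Proposition~\ref{pro:prox}. Since $\cut(\phi)=\T^n\setminus\reg(\phi)$ and $\cut(\phi,H)=\T^n\setminus\reg(\phi,H)$, it suffices to prove $\reg(\phi,H)=\reg(\phi)$. The second formulation with $\partial_P\phi$ then follows immediately from Proposition~\ref{pro:prox}.

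For the inclusion $\reg(\phi,H)\subset\reg(\phi)$, take $x$ with $\tau_{\phi,H}(x)>0$. Theorem~\ref{thm:1}~(1) gives
\[
R_\phi(x)\leqslant\max\{C_{1,1}/\tau_{\phi,H}(x),C_{1,2}\}<+\infty,
\]
and this stays valid when $\tau_{\phi,H}(x)=+\infty$ by the convention $1/(+\infty)=0$. Hence $x\in\reg(\phi)$ by Proposition~\ref{lem:R_phi}~(1). The underlying reason is that the function $z\mapsto\phi(x)+A_t(x,\gamma(t))-A_t(z,\gamma(t))$ is a semiconvex lower support of $\phi$ at $x$. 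So one could also argue directly from Definition~\ref{defn:various_sets}, using Proposition~\ref{pro:At}~(1) for the semiconvexity of $A_t(\cdot,y)$ near $y$.

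For the reverse inclusion, take $x\in\reg(\phi)$. Then $R_\phi(x)<+\infty$ by Proposition~\ref{lem:R_phi}~(1), and Theorem~\ref{thm:1}~(2) gives
\[
\tau_{\phi,H}(x)\geqslant\min\{C_{2,1}/R_\phi(x),C_{2,2}\}>0,
\]
where $C_{2,1}/0=+\infty$ covers the case $R_\phi(x)=0$. So $x\in\reg(\phi,H)$.

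No step here is a genuine obstacle, since all the analytic work sits in Theorem~\ref{thm:1}. The one point to check is that the conventions $1/0=+\infty$ and $1/(+\infty)=0$ in Theorem~\ref{thm:1} handle the degenerate cases correctly, in particular that the positive constants $C_{1,2}$ and $C_{2,2}$ keep both bounds finite and positive respectively. I would also note that the constants depend on $H$, but membership in $\reg(\phi)$ does not, which gives the claimed independence of the Hamiltonian.
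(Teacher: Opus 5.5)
Your proposal is correct and follows the paper's proof essentially verbatim: you get $\reg(\phi,H)\subset\reg(\phi)$ from Theorem~\ref{thm:1}~(1), the reverse inclusion from Theorem~\ref{thm:1}~(2) via $\reg(\phi)=\{R_\phi<+\infty\}$, and the remaining identities from taking complements and applying Proposition~\ref{pro:prox}. Your side remark about the semiconvex lower support $z\mapsto\phi(x)+A_t(x,\gamma(t))-A_t(z,\gamma(t))$ is exactly the mechanism inside the paper's proof of Theorem~\ref{thm:1}~(1).
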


\begin{proof}
For any $x\in\reg(\phi,H)$, we have $\tau_{\phi,H}(x)>0$. Then Theorem \ref{thm:1} (1) leads to
\begin{align*}
	R_\phi(x)\leqslant\max\Big\{\, \frac{C_{1,1}}{\tau_{\phi,H}(x)},\ C_{1,2}\ \Big\}<+\infty,
\end{align*}
which implies $x\in\reg(\phi)$. Thus, we have $\reg(\phi,H)\subset\reg(\phi)$. On the other hand, for any $x\in\reg(\phi)$, we have $R_{\phi}(x)<+\infty$. Then Theorem \ref{thm:1} (2) leads to
\begin{align*}
	\tau_{\phi,H}(x) \geqslant \min\Big\{\, \frac{C_{2,1}}{R_\phi(x)},\ C_{2,2}\ \Big\}>0,
\end{align*}
which implies $x\in\reg(\phi,H)$. Thus, we have $\reg(\phi)\subset\reg(\phi,H)$. It follows that $\reg(\phi,H)=\reg(\phi)$, and $\cut(\phi,H)=\T^n\setminus \reg(\phi,H)=\T^n\setminus \reg(\phi)=\cut(\phi)$. Now, the remaining two relations are direct consequences of Proposition \ref{pro:prox}.
\end{proof}

\subsection{Cut locus and Alexandrov's theorem}

In this subsection we prove, by a variational method, that the cut locus is a Lebesgue null set, and we give a streamlined proof of Alexandrov's theorem along the same lines.

To apply Theorem \ref{thm:1}, we need the following Proposition \ref{pro:uni lip semi} on the uniform Lipschitz and semiconcavity of viscosity solution to the Cauchy problem \eqref{eq:HJephi pre}.
\begin{Pro}\label{pro:uni lip semi}
	Let $H:\T^n\times\R^n\to\R$ be a Tonelli Hamiltonian, $\phi\in \text{\rm SCL}(\T^n)$, and $u$ be the viscosity solution of Cauchy problem \eqref{eq:HJephi pre}. Then there exist $C_1>0$ and $C_2>0$ (depending on the local $C^2$ modulus and the Tonelli modulus of $H$ and the Lipschitz and semiconcavity constants of $\phi$) such that $u(t,\cdot)$ is $C_1$-Lipschitz and $C_2$-semiconcave for all $t\geqslant0$.
\end{Pro}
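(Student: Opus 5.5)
The plan is to get the two bounds separately: a uniform Lipschitz bound from a comparison argument and a superdifferential bound, and a uniform semiconcavity bound by splitting into small and large times.

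\emph{Lipschitz bound.} Let $P$ be a Lipschitz constant of $\phi$. First I would bound the time derivative of $u$ uniformly. Set $c_0=\max_{x}\max_{|p|\leqslant P}|H(x,p)|$. Then $\phi\pm c_0t$ are viscosity super/subsolutions of \eqref{eq:HJe}, so comparison gives $|u(t,x)-\phi(x)|\leqslant c_0t$. Since the equation is autonomous and $T^-_t$ is a nonexpansive semigroup, this yields
\begin{align*}
\|u(t+h,\cdot)-u(t,\cdot)\|_\infty\leqslant\|T^-_h\phi-\phi\|_\infty\leqslant c_0h .
\end{align*}
Hence $u$ is $c_0$-Lipschitz in $t$, uniformly in $t\geqslant0$. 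At a point where $u(t,\cdot)$ is differentiable, semiconcavity of $u$ on $(0,\infty)\times\T^n$ gives $D_tu\in[-c_0,c_0]$ for the time component of $D^+u$. The equation then gives $H(x,D_xu)\leqslant c_0$. By (H1) this means $g_H(|D_xu|)\leqslant c_0$, which bounds $|D_xu|\leqslant C_1$ by superlinearity. Passing to convex hulls of limits bounds all of $D^+_xu(t,x)$. This gives the uniform Lipschitz constant $C_1$, depending only on $P$ and on $H$. Alternatively, Proposition~\ref{pro:cali}(3)-(4) bounds $|D^*_xu|$ through calibrated velocities, since $D^+_xu=\operatorname{co}D^*_xu$.

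\emph{Semiconcavity, small times.} Here I would use the calibration representation $u(t,x)=\phi(\gamma(0))+A_t(\gamma(0),x)$ with a calibrated curve $\gamma$, together with the inequality $u(t,y)\leqslant\phi(\gamma(0))+A_t(\gamma(0),y)$. For $t\leqslant t_*$ (for instance $t_*=t_R$ with $R$ chosen from the speed bound), Proposition~\ref{pro:At}(1) applied to the reversed Lagrangian gives semiconcavity constant $K_R/t$. That constant blows up as $t\to0$, so for small $t$ I would transport the semiconcavity of $\phi$ instead. Given $x$ and a displacement $h$, shift the calibrated curve to $\gamma_{\pm}(s)=\gamma(s)\pm\frac{s}{t}h+(1-\frac st)(\pm h)$, that is, move both endpoints by $\pm h$. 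This gives
\begin{align*}
u(t,x+h)+u(t,x-h)-2u(t,x)\leqslant \phi(\gamma(0)+h)+\phi(\gamma(0)-h)-2\phi(\gamma(0))+\int_0^t[\dots]\,ds .
\end{align*}
The integral is the second difference of $L$ along $(\gamma\pm h,\dot\gamma)$, which is bounded by $Ct|h|^2$ using the $C^2$ bounds of $L$ on the compact velocity range from Proposition~\ref{pro:cali}(3). This yields semiconcavity constant $S+Ct$ on $[0,t_*]$.

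\emph{Semiconcavity, large times.} For $t\geqslant t_*$ write $u(t,\cdot)=T^-_{t_*}(u(t-t_*,\cdot))$. The inf-representation with the $C_1$-Lipschitz datum $u(t-t_*,\cdot)$ gives minimizers within distance $Vt_*$, with $V$ depending only on $C_1$. Then Proposition~\ref{pro:At}(1) shows $u(t,\cdot)$ is $K_R/t_*$-semiconcave, independently of $t$. Taking $C_2=\max\{S+Ct_*,K_R/t_*\}$ finishes the proof.

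The main obstacle is the uniformity in $t$ of the constants in the large-time step. In particular, the speed bound $V$ for calibrated curves of the shifted problem must depend only on $C_1$ and not on $t$, which is why the Lipschitz bound has to be proved first.
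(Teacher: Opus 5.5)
Your proof is correct, but your large-time step takes a different route from the paper's. Your small-time step is the same as the paper's: your curve $\gamma(s)\pm\frac st h+(1-\frac st)(\pm h)$ is just the translate $\gamma\pm h$, which the paper uses for $0\leqslant t\leqslant 1$ to get the constant $C_{0,1}+tC_{0,2}$.

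For $t>1$ the paper neither splits off a small time $t_*$ nor invokes Proposition~\ref{pro:At}. It perturbs the calibrated curve only on the last unit interval, by the ramp $\gamma(s)\pm(s-(t-1))z$. This leaves $\gamma(0)$ fixed and keeps velocities in a fixed compact set. The second difference of the action is then at most $\int_{t-1}^{t}C_{0,3}\bigl(|(s-(t-1))z|^2+|z|^2\bigr)\,ds=\tfrac43 C_{0,3}|z|^2$, directly from $C^2$ bounds on $L$.

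You instead write $u(t,\cdot)=T^-_{t_*}u(t-t_*,\cdot)$ and quote the $K_R/t_*$-semiconcavity of $A_{t_*}(y,\cdot)$, obtained via the reversed Lagrangian. This is the same mechanism in packaged form, since that fundamental-solution estimate is itself proved by such ramp perturbations. The price is threefold:
\begin{itemize}
\item a small threshold $t_*\leqslant t_R$;
\item the localization $d(y_x,x)\leqslant Vt_*$, needed to keep $x\pm h$ inside $B(y_x,Rt_*)$;
\item an inequality valid only for $|h|\lesssim t_*$, which is harmless because semiconcavity with linear modulus is a local property.
\end{itemize}
The paper's version is self-contained, uses only the uniform speed bound of Proposition~\ref{pro:cali}(3), and works with the arbitrary threshold $t=1$. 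Yours is more modular and makes explicit that only the last $t_*$ units of time matter. The uniformity issue you flag at the end is already settled by that same speed bound.

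For the Lipschitz bound, the paper only says it follows by analogous first-order curve perturbations. That is essentially your second route, through Proposition~\ref{pro:cali}(3)--(4) and $D^+_xu=\operatorname{co}D^*_xu$. Your first route is a genuinely PDE-based alternative: comparison with $\phi\pm c_0t$, then $|q|\leqslant c_0$ and $q+H(x,p)\leqslant 0$ for $(q,p)\in D^+u(t,x)$, then coercivity of $H$.
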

\begin{proof}
	We only prove the semiconcavity, since the Lipschitz continuity can be obtained in a similar and even simpler way. For any $t\geqslant0$ and $x\in\T^n$, there exists a $(u,H)$-calibrated curve $\gamma:[0,t]\to \T^n$ with $\gamma(t)=x$. Moreover, $\gamma$ is of class $C^2$ and $\dot{\gamma}$ is uniformly bounded by the compactness of $\T^n$ and the semiconcavity of $\phi$. We lift $\gamma$ to the universal covering space $\R^n$, and set $z\in\R^n$ with $|z|<\frac{1}{2}$. If $0\leqslant t\leqslant1$, let
	\begin{align*}
		\gamma_{\pm}(s)=\gamma(s)\pm z,\quad s\in[0,t].
	\end{align*}
	Then there exists $C_{0,1}>0$ and $C_{0,2}>0$ such that
	\begin{equation}\label{eq:pf uni 1}
		\begin{split}
			&u(t,x+z)+u(t,x-z)-2u(t,x)\\
			\leqslant&\phi(\gamma_{+}(0))+\int_{0}^{t}L(\gamma_{+},\dot{\gamma}_{+})\ ds+\phi(\gamma_{-}(0))+\int_{0}^{t}L(\gamma_{-},\dot{\gamma}_{-})\ ds-2\big(\phi(\gamma(0))+\int_{0}^{t}L(\gamma,\dot{\gamma})\ ds\big)\\
			\leqslant&\big(\phi(\gamma(0)+z)+\phi(\gamma(0)-z)-2\phi(\gamma(0))\big)+\big(\int_{0}^{t}L(\gamma+z,\dot{\gamma})+L(\gamma-z,\dot{\gamma})-2L(\gamma,\dot{\gamma})\ ds\big)\\
			\leqslant&C_{0,1}|z|^2+tC_{0,2}|z|^2\leqslant\big(C_{0,1}+C_{0,2}\big)|z|^2.
		\end{split}
	\end{equation}
	If $t>1$, let
	\begin{align*}
		\gamma_{\pm}(s)=
		\begin{cases}
			\gamma(s),\quad s\in[0,t-1],\\
			\gamma(s)\pm\big(s-(t-1)\big)z,\quad s\in(t-1,t].
		\end{cases}
	\end{align*} 
	Then there exists $C_{0,3}>0$ such that
	\begin{equation}\label{eq:pf uni 2}
		\begin{split}
			&u(t,x+z)+u(t,x-z)-2u(t,x)\\
			\leqslant&\phi(\gamma_{+}(0))+\int_{0}^{t}L(\gamma_{+},\dot{\gamma}_{+})\ ds+\phi(\gamma_{-}(0))+\int_{0}^{t}L(\gamma_{-},\dot{\gamma}_{-})\ ds-2\big(\phi(\gamma(0))+\int_{0}^{t}L(\gamma,\dot{\gamma})\ ds\big)\\
			\leqslant&\int_{t-1}^{t}L(\gamma+\big(s-(t-1)\big)z,\dot{\gamma}+z)+L(\gamma-\big(s-(t-1)\big)z,\dot{\gamma}-z)-2L(\gamma,\dot{\gamma})\ ds\\
			\leqslant&\int_{t-1}^{t}C_{0,3}\big(|\big(s-(t-1)\big)z|^2+|z|^2\big)\ ds\leqslant \frac43C_{0,3}|z|^2.
		\end{split}
	\end{equation}
	Let $C_2=\max\{C_{0,1}+C_{0,2},\frac43C_{0,3}\}$. Since $u$ is continuous, by \eqref{eq:pf uni 1} and \eqref{eq:pf uni 2}, $u(t,\cdot)$ is $C_2$-semiconcave for all $t\geqslant0$.
\end{proof}

\begin{The}\label{thm:cut0}
	Let $\phi\in\text{\rm SCL}\,(\T^n)$. Then we have
	\begin{align*}
		\mathscr{L}(\cut(\phi))=0,
	\end{align*}
	where $\mathscr{L}$ denotes the Lebesgue measure on $\T^n$.	
\end{The}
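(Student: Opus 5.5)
Fix any Tonelli Hamiltonian, say $H(x,p)=\frac12|p|^2$, and let $u(t,\cdot)=T^-_t\phi$ be the viscosity solution of \eqref{eq:HJephi pre}. By Theorem~\ref{thm:Reg_Cut}, $\cut(\phi)=\cut(\phi,H)=\{x:\tau_{\phi,H}(x)=0\}$. The plan is to show that for each small $t>0$ the set $E(\phi,H,t)$ of points admitting a calibrated curve of length $t$ occupies almost all of $\T^n$. Since $\reg(\phi,H)=\bigcup_{k}E(\phi,H,1/k)$, this is the natural route.

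\textbf{Step 1.} Fix $t>0$ and consider the map $\Psi_t:\T^n\to\T^n$, $\Psi_t(y)=$ the starting point $\gamma(0)$ of a $(u,H)$-calibrated curve $\gamma:[0,t]\to\T^n$ with $\gamma(t)=y$. Such a curve exists by Proposition~\ref{pro:cali}~(1). Its image lies in $E(\phi,H,t)$ by the definition of $\tau_{\phi,H}$. On $\mathrm{Diff}(u(t,\cdot))$, which has full measure by Rademacher's theorem, the curve is unique. By Proposition~\ref{pro:cali}~(4), it is given by
\[
\Psi_t(y)=\pi\circ\Phi^{-t}_H\bigl(y,D_xu(t,y)\bigr).
\]

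\textbf{Step 2.} Show that $\Psi_t$ is Lipschitz on this set, with constant controlled uniformly, so that its image has measure comparable to the whole torus. The cleaner route is the reverse map $G_t(x)=\pi\circ\Phi^t_H(x,D\phi(x))$ defined on $E(\phi,H,t)$. Then $G_t(E(\phi,H,t))=\T^n$: every $y$ is the endpoint of a calibrated curve whose start lies in $E(\phi,H,t)$, and that curve is $s\mapsto\pi\Phi^s_H(x,D\phi(x))$ by Lemma~\ref{lem:uniq cali}. Next, $D\phi$ is Lipschitz on $E(\phi,H,t)$. Indeed, Theorem~\ref{thm:1}~(1) gives $R_\phi\leqslant\max\{C_{1,1}/t,C_{1,2}\}=:\lambda_t$ on $E(\phi,H,t)$. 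For $x,x'\in E(\phi,H,t)$, the definition of $R_\phi$ then yields $|D\phi(x')-D\phi(x)|\leqslant\lambda_t\,d(x,x')$. Hence $G_t$ is Lipschitz on $E(\phi,H,t)$, with constant $\mathrm{Lip}(\Phi^t_H)(1+\lambda_t)$.

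\textbf{Step 3.} This is the main obstacle: surjectivity of a Lipschitz map only gives a lower bound on Hausdorff measure, not full measure of $E(\phi,H,t)$. A measure estimate is needed, and I would get it from area/Jacobian control. Write $\mathscr L(\T^n)\leqslant\int_{E(\phi,H,t)}|\det DG_t|\,dx$, using the area formula for the Lipschitz extension of $G_t$. Then bound $|\det DG_t|\leqslant 1+O(t)$ uniformly, so that $\mathscr L(E(\phi,H,t))\geqslant \mathscr L(\T^n)/(1+O(t))$.

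To bound the Jacobian, use semiconcavity. At density points of $E(\phi,H,t)$ where $D\phi|_E$ is differentiable, its derivative $A$ is symmetric and satisfies $A\leqslant S\,I$ (from $\phi$ being $S$-semiconcave) and $A\geqslant -\lambda_t I$. The linearized flow gives
\[
DG_t=D_x\Phi^t+D_p\Phi^t A=I+tH_{pp}A+O(t)+O(t^2\lambda_t).
\]
The upper bound $A\leqslant SI$ controls the positive part. For the negative part, I would argue that $\det\bigl(I+tH_{pp}A+\dots\bigr)\leqslant\det\bigl(I+tH_{pp}SI+O(t)\bigr)$, since negative eigenvalues only shrink the determinant as long as the matrix stays positive. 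Positivity follows from Step~2, because $G_t$ restricted to $E(\phi,H,t)$ is injective up to measure zero (calibrated curves don't cross before time $t$), so the sign of the Jacobian is controlled.

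\textbf{Step 4.} Letting $t\downarrow0$ gives $\mathscr L(E(\phi,H,t))\to\mathscr L(\T^n)$. Since the sets $E(\phi,H,t)$ increase as $t\downarrow0$ with union $\reg(\phi)$, we conclude $\mathscr L(\cut(\phi))=0$.

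If the determinant comparison in Step~3 proves delicate, I would first try to replace it by the reverse map $\Psi_s$ from time $s$ to time $0$. Use that $u(s,\cdot)$ is uniformly semiconcave (Proposition~\ref{pro:uni lip semi}), so that $D_xu(s,\cdot)$ has derivative bounded above by $C_2$. Then $\Psi_s$ has Jacobian at most $1+O(s)$ almost everywhere. This gives $\mathscr L(\Psi_s(\T^n))\geqslant$ a bound derived from the measure-preservation of the Hamiltonian flow (Liouville) restricted to graphs.
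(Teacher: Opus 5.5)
\textbf{The gap is in Step~3.} Steps 1, 2 and 4 are sound, but nothing you prove gives $|\det DG_t|\leqslant 1+O(t)$.

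\begin{itemize}
\item The only lower bound you have on $A$ is $A\geqslant-\lambda_t I$ from Step~2. Here $t\lambda_t=\max\{C_{1,1},tC_{1,2}\}$ is of order one, not small; in the proof of Theorem~\ref{thm:1} one has $C_{1,1}\geqslant1$. So, as far as your estimates go, $I+tH_{pp}A$ may have eigenvalues well below $-1$, and then $|\det DG_t|$ is only bounded by a constant.
\item Your positivity argument does not repair this. Injectivity of $G_t$ on the non-open set $E(\phi,H,t)$ says nothing about the sign of the Jacobian: a reflection is injective with Jacobian $-1$.
\item Even $\det DG_t>0$ would not exclude two eigenvalues below $-1$, whose product exceeds $1$.
\item The fallback has the inequality reversed. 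For $H=\frac12|p|^2$ the backward map is $\Psi_s(y)=y-sD_xu(s,y)$. The bound $D^2_xu\leqslant C_2$ gives a \emph{lower} bound $I-sD^2_xu\geqslant(1-sC_2)I$ on its derivative, not an upper bound $1+O(s)$.
\item Liouville's theorem concerns phase-space volume and does not control the projection of a graph.
\end{itemize}

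\textbf{Two ways to close the argument.}

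First, you do not actually need the constant $1+O(t)$. Your Lipschitz constant $\mathrm{Lip}(\Phi^t_H)(1+\lambda_t)$ is of order $1/t$ and too crude. Replace it using $|D_p(\pi\circ\Phi^t_H)|\leqslant Ct$ and $|D_x(\pi\circ\Phi^t_H)|\leqslant 1+Ct$. Then Step~2 shows $G_t$ is $K_0$-Lipschitz on $E(\phi,H,t)$, with $K_0=1+Ct+Ct\lambda_t$ bounded independently of $t\in(0,1]$. Localize with the speed bound of Proposition~\ref{pro:cali}~(3), so that $B(x_0,r-C_1t)\subset G_t(E(\phi,H,t)\cap B(x_0,r))$. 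Letting $t\to0^+$ gives $\mathscr{L}(\reg(\phi)\cap B(x_0,r))\geqslant K_0^{-n}\mathscr{L}(B(x_0,r))$ for every small ball. Lebesgue's density theorem then forces $\mathscr{L}(\cut(\phi))=0$.

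Second, the paper gets the sharp constant from the ingredient your Step~3 lacks: the uniform semiconcavity of $u(t,\cdot)$ at the \emph{endpoint} (Proposition~\ref{pro:uni lip semi}). For $H=\frac12|p|^2$ the momentum is constant, so $D\phi(x)\in D^+_xu(t,\Gamma_t(x))$ with $\Gamma_t(x)=x+tD\phi(x)$. Monotonicity of $D^+_xu(t,\cdot)$ yields
$\langle x_1-x_2,\Gamma_t(x_1)-\Gamma_t(x_2)\rangle\geqslant(1-tC_2)|\Gamma_t(x_1)-\Gamma_t(x_2)|^2$.
Hence $\Gamma_t$ is $(1-tC_2)^{-1}$-Lipschitz on $E(\phi,H,t)$, and full measure follows directly as $t\to0^+$. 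No Jacobians, no symmetry of $A$ (which you assert without proof), and no density points are needed.

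In your language, this endpoint bound is exactly what forces $I+tA>0$. Formally $D^2_xu(t,\Gamma_t(x))=A(I+tA)^{-1}\leqslant C_2$, which is impossible if $I+tA$ has a negative eigenvalue and $t<1/C_2$.
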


\begin{proof}
	By Theorem \ref{thm:Reg_Cut}, we only need to prove $\mathscr{L}(\cut(\phi,H))=0$ for some specific Tonelli Hamiltonian $H$ on $\T^n$. We choose a special Hamiltonian $H(x,p)=\frac{1}{2} |p|^2$, $x\in\T^n$, $p\in\R^n$. In this case, the associated fundamental solution is given by  
	\[
	A_t(x,y) = \frac{d^2(x,y)}{2t}, \quad t>0,\ x,y\in\T^n.
	\]
	According to Proposition \ref{pro:uni lip semi}, there exist $C_1>0$ and $C_2>0$ such that the function $T^-_t\phi:\T^n\to\R$ is $C_1$-Lipschitz and $C_2$-semiconcave for all $t\geqslant0$.
	

	For $t>0$, consider the map $\Gamma_t:E(\phi,H,t)\to\T^n$ (see Definition \ref{defn:tau_phi} for $E(\phi,H,t)$) defined by
	\[
	x\mapsto\Gamma_t(x)=\bigg\{ y\in\T^n : T^-_t\phi(y) = \phi(x) + \frac{d^2(x,y)}{2t} \bigg\}.
	\]
	Notice that the Hamiltonian system in this case is given by  
	\[
	\begin{cases}
		\dot{x} = p,\\
		\dot{p} = 0.
	\end{cases}
	\]
	Thus, for $x\in E(\phi,H,t)$, by Proposition~\ref{pro:cali} (2) and (4), $D\phi(x)$ exists and we have
	\begin{equation}\label{eq:cut0pf1}
		\Gamma_t(x) = x + t D\phi(x),
	\end{equation}
	and
	\begin{equation}\label{eq:cut0pf2}
		D\phi(x) \in D^+T^-_t\phi(\Gamma_t(x)).
	\end{equation}
	Since the infimum in the definition of $T^-_t\phi$ can always be attained, the map $\Gamma_t$ is surjective, that is,
	\begin{equation}\label{eq:cut0pf3}
		\Gamma_t(E(\phi,H,t)) = \T^n.
	\end{equation}
	Now, for any ball $B(x_0, r)\subset\T^n$ with radius $0<r<\frac{1}{100}$ and $0<t<\min\{r/C_1, 1/C_2\}$, we obtain from \eqref{eq:cut0pf1} and \eqref{eq:cut0pf3} that
	\begin{equation}\label{eq:cut0pf4}
		B(x_0, r - tC_1) \subset \Gamma_t(E(\phi,H,t) \cap B(x_0, r)).
	\end{equation}
	Moreover, for any $x_1, x_2 \in E(\phi,H,t) \cap B(x_0, r)$, by \eqref{eq:cut0pf1}, \eqref{eq:cut0pf2}, the $C_2$-semiconcavity of $T^-_t\phi$ (see Proposition~\ref{pro:uni lip semi}) and \cite[Proposition 3.3.10]{Cannarsa_Sinestrari_book}, we compute
	\begin{align*}
		|x_1 - x_2| \cdot |\Gamma_t(x_1) - \Gamma_t(x_2)|
		&\geqslant \langle x_1 - x_2, \Gamma_t(x_1) - \Gamma_t(x_2) \rangle\\
		&= \langle (\Gamma_t(x_1) - tD\phi(x_1)) - (\Gamma_t(x_2) - tD\phi(x_2)), \Gamma_t(x_1) - \Gamma_t(x_2) \rangle\\
		&= |\Gamma_t(x_1) - \Gamma_t(x_2)|^2 - t \langle D\phi(x_1) - D\phi(x_2), \Gamma_t(x_1) - \Gamma_t(x_2) \rangle\\
		&\geqslant (1 - tC_2) |\Gamma_t(x_1) - \Gamma_t(x_2)|^2.
	\end{align*}
	This leads to the estimate
	\begin{equation}\label{eq:cut0pf5}
		|\Gamma_t(x_1) - \Gamma_t(x_2)| \leqslant \frac{1}{1 - tC_2} |x_1 - x_2|.
	\end{equation}
	Thus, from \eqref{eq:cut0pf4}, \eqref{eq:cut0pf5} and the relation $E(\phi,H,t)\subset\reg(\phi,H)$, we obtain the measure comparison
	\begin{align*}
		&\mathscr{L}(B(x_0, r - tC_1))
		\leqslant \mathscr{L}(\Gamma_t(E(\phi,H,t) \cap B(x_0, r)))
		\leqslant \bigg(\frac{1}{1 - tC_2} \bigg)^n \mathscr{L}(E(\phi,H,t) \cap B(x_0, r))\\
		\leqslant &\bigg(\frac{1}{1 - tC_2} \bigg)^n \mathscr{L}(\reg(\phi,H) \cap B(x_0, r))
		\leqslant \bigg(\frac{1}{1 - tC_2} \bigg)^n \mathscr{L}(B(x_0, r)).
	\end{align*}
	Sending $t \to 0^+$, we conclude that  
	\[
	\mathscr{L}(\reg(\phi,H) \cap B(x_0, r)) = \mathscr{L}(B(x_0, r)).
	\]
	Since the choice of $B(x_0, r)$ was arbitrary, it follows that  
	\[
	\mathscr{L}(\reg(\phi,H)) = \mathscr{L}(\T^n).
	\]
	This implies, by Theorem \ref{thm:Reg_Cut}, $\mathscr{L}(\cut(\phi))=\mathscr{L}(\cut(\phi,H))= 0$.
\end{proof}

\begin{Pro}[\protect{\cite[Lemma 4.1]{Azagra2023}}]\label{lem:level}
	Let \( \Omega \subset \R^n \) be an open convex subset, $\phi\in\text{\rm SCL}(\Omega)$ and $\psi\in C^{1,1}(\Omega)$ satisfy $\psi\leqslant\phi$. Define $A=\{x\in\Omega:\psi(x)=\phi(x)\}$. Then we have $\mathscr{L}(A)=\mathscr{L}(A\cap\text{\rm Alex}(\phi))$.
\end{Pro}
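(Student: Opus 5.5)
The plan is to reduce the statement to a second-order flatness property of the nonnegative semiconcave function $w=\phi-\psi$ at Lebesgue density points of $A$. We avoid any appeal to Alexandrov's theorem itself, since this proposition is meant to feed the streamlined proof of that theorem.

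\emph{Step 1: setup.} Since $\psi\in C^{1,1}(\Omega)$, it is both semiconcave and semiconvex with linear modulus. Hence $w=\phi-\psi$ belongs to $\SCL(\Omega)$ with some constant $C>0$, we have $w\geqslant0$ on $\Omega$, and $A=\{w=0\}$.

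\emph{Step 2: first-order behaviour on $A$.} At every $a\in A$ the zero function (which is semiconvex) touches $w$ from below. Proposition \ref{pro:touching} then gives that $w$ is differentiable at $a$ with $Dw(a)=0$. Proposition \ref{pro:scl fund} therefore yields
\[
0\leqslant w(y)\leqslant \frac C2|y-a|^2,\qquad \forall a\in A,\ y\in\Omega .
\]

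\emph{Step 3: density points.} Let $x\in A$ be a Lebesgue density point of $A$; almost every point of $A$ is such a point. Fix $\varepsilon\in(0,1)$. For all $y$ close enough to $x$, the ball $B(y,\varepsilon|y-x|)$ must meet $A$. Indeed, otherwise $B(x,2|y-x|)\setminus A$ would contain a ball of radius $\varepsilon|y-x|$, i.e.\ a fixed fraction $(\varepsilon/2)^n$ of $B(x,2|y-x|)$, which contradicts density one. Choosing $a\in A$ with $|y-a|<\varepsilon|y-x|$ and applying Step 2 gives
\[
0\leqslant w(y)\leqslant \frac{C\varepsilon^2}{2}|y-x|^2
\]
for $y$ near $x$. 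Since $\varepsilon$ is arbitrary, $w(y)=o(|y-x|^2)$. Thus $w$ is twice differentiable at $x$ with $Dw(x)=0$ and $D^2w(x)=0$. Equivalently, by Proposition \ref{pro:Alex2}, $\sup_{p\in D^+w(y)}|p|=o(|y-x|)$.

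\emph{Step 4: the smooth part.} Since $D\psi$ is Lipschitz, Rademacher's theorem shows that $D\psi$ is differentiable at almost every point. At such a point $x$, the integral form of Taylor's formula, $\psi(y)-\psi(x)-D\psi(x)\cdot(y-x)=\int_0^1\big(D\psi(x+s(y-x))-D\psi(x)\big)\cdot(y-x)\,ds$, gives
\[
\psi(y)=\psi(x)+D\psi(x)\cdot(y-x)+\tfrac12\langle D(D\psi)(x)(y-x),y-x\rangle+o(|y-x|^2).
\]
Hence $\psi$ is twice differentiable almost everywhere.

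\emph{Conclusion.} At every $x\in A$ that is both a density point of $A$ and a point of twice differentiability of $\psi$, the function $\phi=\psi+w$ is twice differentiable by Steps 3 and 4. Such points form a full-measure subset of $A$, so $\mathscr L(A)=\mathscr L(A\cap\mathrm{Alex}(\phi))$.

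The main obstacle is Step 3: upgrading the pointwise bound $w\leqslant \frac C2|\cdot-a|^2$, which holds at contact points, to $o(|y-x|^2)$ at the point $x$. This is exactly where the density argument is needed, since it guarantees that a contact point $a$ lies within $o(|y-x|)$ of every nearby $y$.
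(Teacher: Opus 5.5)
Your proof is correct. The paper gives no argument of its own here; it imports the statement from \cite[Lemma 4.1]{Azagra2023}. Your proposal is therefore a self-contained proof that uses only tools already in the paper. Propositions~\ref{pro:touching} and~\ref{pro:scl fund} give the quadratic bound $0\leqslant w\leqslant \frac C2|\cdot-a|^2$ at every contact point $a\in A$. The Lebesgue density theorem turns this into $w(y)=o(|y-x|^2)$ at density points $x$ of $A$. Rademacher's theorem, applied to the Lipschitz map $D\psi$, handles the smooth part.

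The argument is not circular with respect to Theorem~\ref{thm:Alex}, which is the reason the paper needs this statement. It also works purely with function values, so the appeal to Proposition~\ref{pro:Alex2} at the end of Step~3 is unnecessary and can be dropped.

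Some small housekeeping points remain:
\begin{enumerate}
\item $A$ is relatively closed in $\Omega$, hence measurable, so the density theorem applies. The set $A\cap\operatorname{Alex}(\phi)$ contains a full-measure subset of $A$, so it is measurable by completeness of $\mathscr{L}$.
\item In Step~4, replace $D(D\psi)(x)$ by its symmetric part, so that $D^2\phi(x)$ is a symmetric Hessian. Only the symmetric part enters the quadratic form, so nothing changes.
\item If $C^{1,1}(\Omega)$ is meant only locally, carry out Steps~1--3 in a small ball around $x$ contained in $\Omega$. This is harmless because the whole argument is local.
\end{enumerate}
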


\begin{The}[Alexandrov's theorem]\label{thm:Alex}
	Let $\phi\in\text{\rm SCL}\,(\T^n)$. Then we have
\begin{align*}
	\mathscr{L}(\text{\rm Alex}\,(\phi))=\mathscr{L}(\T^n).
\end{align*}
\end{The}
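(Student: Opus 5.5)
The plan is to combine Theorem \ref{thm:cut0} with Proposition \ref{lem:level}. By Theorem \ref{thm:cut0}, $\reg(\phi)$ has full measure in $\T^n$. Each regular point is a contact point of $\phi$ with a smooth lower support. The point is to organize these supports into countably many $C^{1,1}$ functions lying below $\phi$ on fixed convex domains. Proposition \ref{lem:level} then gives twice differentiability almost everywhere on each contact set.

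\textbf{Step 1: countable exhaustion.} Write
\[
\reg(\phi)=\bigcup_{k\in\N}F(\phi,k).
\]
This holds by Proposition \ref{lem:R_phi} (1), and each $F(\phi,k)$ is compact by Proposition \ref{lem:R_phi} (2). Cover $\T^n$ by countably many balls $B_j$ of radius less than $1/100$, each lying in a single flat chart. It suffices to show that $\mathscr{L}\big(F(\phi,k)\cap B_j\setminus \text{Alex}(\phi)\big)=0$ for all $j,k$.

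\textbf{Step 2: a single $C^{1,1}$ lower support on $B_j$.} For $x_0\in F(\phi,k)\cap B_j$, Lemma \ref{lem:touching_2} (applied with any $\lambda>k$, or with $\lambda=k$ by taking a limit) shows that
\[
\psi_{x_0}(y)=\phi(x_0)+D\phi(x_0)\cdot(y-x_0)-\tfrac{k+1}{2}|y-x_0|^2
\]
touches $\phi$ from below at $x_0$ on the whole of $B_j$. Set
\[
\psi_{j,k}=\sup_{x_0\in F(\phi,k)\cap B_j}\psi_{x_0}.
\]
This is a supremum of uniformly $(k+1)$-semiconvex functions, so it is semiconvex with linear modulus. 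It satisfies $\psi_{j,k}\leqslant\phi$ on $B_j$, and $\psi_{j,k}=\phi$ on $F(\phi,k)\cap B_j$.

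The main obstacle is that Proposition \ref{lem:level} requires $\psi\in C^{1,1}$, while $\psi_{j,k}$ is a priori only semiconvex. I would first try the following route. On the set $A=\{\psi_{j,k}=\phi\}$, the function $\psi_{j,k}$ is squeezed between a semiconvex function and the semiconcave function $\phi$. By Lemma \ref{lem:touching_C11}, this gives the uniform estimate $|D\phi(x)-D\phi(y)|\leqslant C|x-y|$ for $x,y\in A$. The Whitney $C^{1,1}$ extension theorem, applied to the $1$-jet $(\phi,D\phi)$ on $A$, then produces a $C^{1,1}$ function $\tilde\psi$ that agrees with $\phi$ to first order on $A$.

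To enforce $\tilde\psi\leqslant\phi$, I would use the sup-convolution/inf-convolution regularization (Lasry--Lions). The Lasry--Lions construction $(\psi_{j,k})^{\varepsilon}$ (an inf-sup convolution) of a function squeezed between a semiconvex minorant and a semiconcave majorant is $C^{1,1}$. It also stays between them, and it coincides with $\phi$ on the contact set. This yields a genuine $C^{1,1}$ function $\tilde\psi\leqslant\phi$ with $\{\tilde\psi=\phi\}\supset F(\phi,k)\cap B_j$.

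\textbf{Step 3: conclusion.} Apply Proposition \ref{lem:level} with $\Omega=B_j$ and the function $\tilde\psi$ from Step 2. Almost every point of $\{\tilde\psi=\phi\}$ lies in $\text{Alex}(\phi)$, so almost every point of $F(\phi,k)\cap B_j$ does as well. Taking the countable union over $j,k$ and using $\mathscr{L}(\cut(\phi))=0$ from Theorem \ref{thm:cut0}, we conclude that $\mathscr{L}(\T^n\setminus\text{Alex}(\phi))=0$.
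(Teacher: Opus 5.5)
Your reduction in Steps 1 and 3 is fine, but the whole analytic content of the theorem sits in Step 2, and there the decisive claim is asserted rather than proved. You need a $C^{1,1}$ function $\tilde\psi$ on $B_j$ with $\psi_{j,k}\leqslant\tilde\psi\leqslant\phi$, which is Ilmanen's lemma for the pair $(\psi_{j,k},\phi)$. Your sketch does not say which function is regularized, in which order, with which parameters, or why the result stays between the two bounds, and the standard reading fails. For $H=\frac12|p|^2$, the Lasry--Lions envelope $(u_\varepsilon)^\delta$, $0<\delta<\varepsilon$, of $u=\psi_{j,k}$ is $T^+_\delta T^-_\varepsilon\psi_{j,k}\leqslant T^-_{\varepsilon-\delta}\psi_{j,k}$, because $T^+_\delta T^-_\delta\leqslant\mathrm{id}$. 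Take a contact point $x$ with $p=D\phi(x)\neq0$ and set $s=\varepsilon-\delta<1/C$, where $C$ is the semiconcavity constant of $\phi$. Then
\[
T^-_{s}\psi_{j,k}(x)\leqslant\psi_{j,k}(x-sp)+\tfrac s2|p|^2\leqslant\phi(x-sp)+\tfrac s2|p|^2\leqslant\phi(x)-\tfrac s2(1-Cs)|p|^2<\phi(x),
\]
so contact is lost exactly where you need it. With $\delta=\varepsilon$ you just recover $\psi_{j,k}$, since semiconvex functions are fixed by $T^+_\varepsilon T^-_\varepsilon$ for small $\varepsilon$; and $\psi_{j,k}$, a supremum of paraboloids, in general has corners. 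An equal-time commutator in the right order ($T^-_tT^+_t\psi_{j,k}$, or $T^+_tT^-_t\phi$) does work. But checking that it is $C^{1,1}$ and stays between the bounds uses the semiconvexity of $\psi_{j,k}$ and the semiconcavity of $\phi$ in an essential way, and none of this is in your sketch.

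Once you use $T^+_tT^-_t\phi$, however, $\psi_{j,k}$ and the ball covering become superfluous, and you are at the paper's proof. By \cite{Cannarsa_Cheng_Hong2025}, $T^+_tT^-_t\phi\leqslant\phi$ is $C^{1,1}$ for $t\leqslant\tau_0$ and touches $\phi$ exactly on $E(\phi,H,t)$. This set contains $F(\phi,k)$ for $t\leqslant\min\{C_{2,1}/k,C_{2,2}\}$ by Theorem~\ref{thm:1}~(2), and it increases to $\reg(\phi)$ as $t\downarrow0$. Proposition~\ref{lem:level} and Theorem~\ref{thm:cut0} then conclude.

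Alternatively, your abandoned Whitney route closes without any ordering. The point is that $R_\phi(z)\leqslant k$ controls $D^+\phi(y)$ for \emph{every} $y$, not just for $y\in A:=F(\phi,k)\cap B_j$. Let $\Psi$ be a Whitney $C^{1,1}$ extension of $(\phi,D\phi)|_A$; both Whitney conditions follow from $R_\phi\leqslant k$ on $A$ and semiconcavity. Then $g=\phi-\Psi$ satisfies $|q|\leqslant(k+\operatorname{Lip}D\Psi)\operatorname{dist}(y,A)$ for all $q\in D^+g(y)$, and this bound is $o(|y-x|)$ at Lebesgue density points $x$ of $A$. Integrating $g$ along $[x,y]$ gives $g(y)=o(|y-x|^2)$. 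So $\phi$ is twice differentiable at every density point of $A$ where $\Psi$ is, i.e.\ almost everywhere on $A$, by Rademacher's theorem applied to $D\Psi$.
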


\begin{proof}
	By \cite[Lemma 3.1, Proposition 3.2]{Cannarsa_Cheng_Hong2025}, we have that
	\begin{enumerate}[i)]
		\item $T^+_t \circ T^-_t \phi \leqslant \phi$ for all $t>0$, and $T^+_t \circ T^-_t \phi(x) = \phi(x)$ if and only if $x \in E(\phi,H,t)$.
		\item there exists $\tau_0 > 0$ such that $T^+_t \circ T^-_t \phi$ is of class $C^{1,1}$ for all $t \in (0, \tau_0]$.
	\end{enumerate}
	Then, from Proposition \ref{lem:level}, we conclude that
	\[
	\mathscr{L}(E(\phi,H,t)) = \mathscr{L}(E(\phi,H,t) \cap \text{Alex}(\phi)).
	\]
	Letting $t \to 0^+$ on both sides of the equality above, combined with Theorem \ref{thm:cut0}, we obtain
	\[
	\mathscr{L}(\T^n) = \mathscr{L}(\reg(\phi)) = \mathscr{L}(\reg(\phi) \cap \text{Alex}(\phi)) \leqslant \mathscr{L}(\text{Alex}(\phi)) \leqslant \mathscr{L}(\T^n),
	\]
	which implies $\mathscr{L}(\text{\rm Alex}\,(\phi))=\mathscr{L}(\T^n)$.
\end{proof}

\subsection{Criterion for closedness of the cut locus}

For any weak KAM solution $\phi$ of \eqref{eq:HJs} on $\T^n$, we recall the following results on regularity of $\phi$ (keeping in mind that $\reg(\phi,H) = \reg(\phi)$ and $\cut(\phi,H) = \cut(\phi)$, by Theorem \ref{thm:Reg_Cut}).
\begin{enumerate}
	\item $\sing(\phi)\subset \cut(\phi)\subset\overline{\sing(\phi)}$.
	\item $\text{supp}_{C^{1,1}}(\phi)=\T^n\setminus\overline{\sing(\phi)}$, where $\text{supp}_{C^{1,1}}(\phi)$ is the $C^{1,1}$ support of $\phi$ and consists of the points $x\in\T^n$ for which there exists a neighborhood $V$ of $x$ such that $\phi$ is of class $C^{1,1}$ in $V$.
	\item The cut time function $\tau_{\phi,H}$ is upper-semicontinuous (see Proposition \ref{pro:tau-usc} or \cite{Cannarsa_Cheng_Fathi2021}).
	\item $R_\phi$ is a lower semicontinuous function on $\T^n$ (Proposition \ref{lem:R_phi}).
\end{enumerate}

Now we give the following characterization of the closedness of $\cut(\phi)$.

\begin{The}\label{thm:close_cut}
Suppose $H$ is a Tonelli Hamiltonian on $\T^n$ and $\phi$ is a weak KAM solution of \eqref{eq:HJs}. Then the following statements are equivalent.
\begin{enumerate}[\rm (a)]
	\item $\cut(\phi)$ is closed.
	\item $\cut(\phi)=\overline{\sing(\phi)}$.
	\item For any $(\phi,H)$-calibrated curve $\gamma:(-\infty,0]\to\T^n$ we have $\gamma(t)\in \text{\rm supp}_{C^{1,1}}(\phi)$ for all $t<0$.
	\item $\tau_{\phi,H}$ is continuous (in the extended interval $[0,+\infty]$).
	\item For any $x\in\reg(\phi)$, the function $R_\phi$ is locally bounded above at $x$.
\end{enumerate}
\end{The}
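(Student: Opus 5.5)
I would prove the cycle (a)$\Leftrightarrow$(b), (a)$\Leftrightarrow$(d), (d)$\Leftrightarrow$(e), and (b)$\Leftrightarrow$(c), using the four recalled facts (1)--(4) and Theorem~\ref{thm:1}.

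\emph{(a)$\Leftrightarrow$(b).} Since $\sing(\phi)\subset\cut(\phi)\subset\overline{\sing(\phi)}$, the cut locus is closed if and only if it equals $\overline{\sing(\phi)}$.

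\emph{(a)$\Leftrightarrow$(d).} If $\tau_{\phi,H}$ is continuous, then $\cut(\phi)=\tau_{\phi,H}^{-1}(0)$ is closed.

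For the converse, upper semicontinuity of $\tau_{\phi,H}$ is already known (Proposition~\ref{pro:tau-usc}), so only lower semicontinuity at a point $x_0$ is needed.
\begin{itemize}
\item If $x_0\in\cut(\phi)$, then $\tau_{\phi,H}(x_0)=0$ and lower semicontinuity is trivial.
\item If $x_0\in\reg(\phi)$, I would use the weak KAM structure. For $t<\tau_{\phi,H}(x_0)$, the calibrated curve $\gamma(s)=\pi\Phi^s_H(x_0,D\phi(x_0))$ lies in $\reg(\phi)$ for $s\in[0,t]$, because it extends past $s$. By (a), there is then a uniform neighborhood of the compact arc $\gamma([0,t])$ contained in the open set $\reg(\phi)=\operatorname{supp}_{C^{1,1}}(\phi)$. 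On that neighborhood $D\phi$ is Lipschitz, so the flow $\Phi^s_H(x,D\phi(x))$ depends continuously on $x$. Its projected orbits from nearby $x$ stay in the neighborhood and satisfy $H(\cdot,D\phi)=0$.
\end{itemize}
I would then show that these orbits are calibrated, i.e.\ $\phi(\gamma_x(s))-\phi(x)=\int L$. Since $\phi$ is $C^{1}$ along them and $p=D\phi$ is preserved (characteristics of a $C^{1,1}$ solution), this follows. It gives $\tau_{\phi,H}(x)\ge t$ for $x$ near $x_0$. This is the step I expect to be the main obstacle: justifying that $p(s)=D\phi(\gamma_x(s))$ along the flow when $\phi$ is only $C^{1,1}$. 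I would handle it by comparing with backward calibrated curves, or by using $T^-_s\phi=\phi$ together with uniqueness of the characteristic through a point of differentiability.

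\emph{(d)$\Leftrightarrow$(e).}
\begin{itemize}
\item (d)$\Rightarrow$(e): If $x\in\reg(\phi)$ and $\tau_{\phi,H}$ is continuous, then $\tau_{\phi,H}\ge\tau_{\phi,H}(x)/2>0$ near $x$. Theorem~\ref{thm:1}(1) then gives $R_\phi\le\max\{2C_{1,1}/\tau_{\phi,H}(x),C_{1,2}\}$ there.
\item (e)$\Rightarrow$(a): By (e), every regular point has a neighborhood on which $R_\phi<\infty$, and that neighborhood lies in $\reg(\phi)$ by Proposition~\ref{lem:R_phi}(1). So $\reg(\phi)$ is open. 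Combined with the previous step, this gives (d).
\end{itemize}

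\emph{(b)$\Leftrightarrow$(c).}
\begin{itemize}
\item (b)$\Rightarrow$(c): Let $\gamma:(-\infty,0]\to\T^n$ be calibrated and $t<0$. Then $\gamma(t)$ has the forward calibrated extension up to $0$, so $\tau_{\phi,H}(\gamma(t))\ge|t|>0$ and $\gamma(t)\in\reg(\phi)=\T^n\setminus\overline{\sing(\phi)}=\operatorname{supp}_{C^{1,1}}(\phi)$.
\item (c)$\Rightarrow$(b): Let $x\in\reg(\phi)$. Take a backward calibrated curve $\gamma:(-\infty,0]\to\T^n$ ending at $\gamma(\tau_{\phi,H}(x)/2)$ along the forward characteristic; for a weak KAM solution such curves exist from every point. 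By uniqueness (Lemma~\ref{lem:uniq cali}), this curve passes through $x$ at a negative time. Hence $x\in\operatorname{supp}_{C^{1,1}}(\phi)=\T^n\setminus\overline{\sing(\phi)}$. This shows $\reg(\phi)\cap\overline{\sing(\phi)}=\varnothing$, which is (b).
\end{itemize}
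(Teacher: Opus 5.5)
Your proposal is correct in outline, and most steps match the paper's. These are: (a)$\Leftrightarrow$(b) from $\sing(\phi)\subset\cut(\phi)\subset\overline{\sing(\phi)}$; (b)$\Rightarrow$(c) because $\gamma(t)$, $t<0$, carries a forward calibrated piece of length $|t|$; (d)$\Rightarrow$(e) from Theorem~\ref{thm:1}(1); and (e)$\Rightarrow$(a) from Proposition~\ref{lem:R_phi}(1). The paper instead runs the single cycle (a)$\Rightarrow$(b)$\Rightarrow$(c)$\Rightarrow$(d)$\Rightarrow$(e)$\Rightarrow$(a).

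The real difference is how lower semicontinuity of $\tau_{\phi,H}$ is obtained. The paper starts from (c) and argues by contradiction. Suppose $y_i\to x$ with $\tau_{\phi,H}(y_i)\to\bar t<\tau_{\phi,H}(x)$. By Lemma~\ref{lem:uniq cali}, the maximal calibrated curves from $y_i$ are $\pi\circ\Phi^s_H(y_i,D\phi(y_i))$, and their endpoints are cut points by maximality. Since $D\phi(y_i)\to D\phi(x)$ by upper semicontinuity of $D^+\phi$, these endpoints converge to $\gamma(\bar t)\in\operatorname{supp}_{C^{1,1}}(\phi)=\T^n\setminus\overline{\sing(\phi)}$, which is impossible. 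You start from (a) and build calibrated curves from nearby points using the $C^{1,1}$ structure on a tube around the arc. That forces you to prove that the graph of $D\phi$ is invariant under the Hamiltonian flow. The paper's argument is purely soft: it uses only continuity of the flow, upper semicontinuity of $D^+\phi$, and maximality. It also works verbatim from (a), since a limit of cut points is then a cut point while $\gamma(\bar t)$ is regular.

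The step you flag as the main obstacle is not really one, and your own setup already resolves it. By continuity of the flow and of $D\phi$ on $U$, $\pi\circ\Phi^s_H(x,D\phi(x))$ stays in $U\subset\reg(\phi)$ for $s\in[0,t]$. Lemma~\ref{lem:uniq cali} then identifies the maximal calibrated curve from $x$ with this orbit on $[0,\tau_{\phi,H}(x)]$. If $\tau_{\phi,H}(x)<t$, its endpoint lies in $U$, so it is regular and admits a calibrated continuation, contradicting maximality. Hence $\tau_{\phi,H}(x)\geqslant t$, with no need for $C^{1,1}$, a Lipschitz $D\phi$, or $\reg(\phi)=\operatorname{supp}_{C^{1,1}}(\phi)$.

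Your literal route also closes. On an open $U\subset\reg(\phi)$, $\phi$ is $C^1$ with $H(y,D\phi(y))=0$. An integral curve of $y\mapsto H_p(y,D\phi(y))$ then satisfies $\frac{d}{ds}\phi(y(s))=L(y,\dot y)$ by Fenchel equality. Together with $\phi(b)-\phi(a)\leqslant A_s(a,b)$, which follows from $T^-_s\phi=\phi$, this makes the curve calibrated.

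Two small repairs are needed:
\begin{itemize}
\item When $\tau_{\phi,H}(x)=+\infty$, replace $\tau_{\phi,H}(x)/2$ in (d)$\Rightarrow$(e) and (c)$\Rightarrow$(b) by any finite $a\in(0,\tau_{\phi,H}(x))$.
\item In (c)$\Rightarrow$(b), the uniqueness you need is backward, not the forward uniqueness of Lemma~\ref{lem:uniq cali}. Since $z=\eta(s_0)$ is a differentiability point, Proposition~\ref{pro:cali}(4) gives the backward calibrated curve the terminal momentum $D\phi(z)$, which equals that of $\eta$ at $s_0$. Uniqueness for the Hamiltonian system with terminal data then yields coincidence.
\end{itemize}
This backward-uniqueness fact is exactly what the paper uses tacitly in its (c)$\Rightarrow$(d) step, when it places $x=\gamma(0)$ and $\gamma(\bar t)$ in $\operatorname{supp}_{C^{1,1}}(\phi)$.
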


\begin{proof}
(a) $\Rightarrow$ (b): We conclude from $\sing(\phi)\subset \cut(\phi)\subset\overline{\sing(\phi)}$ that $\overline{\cut(\phi)}=\overline{\sing(\phi)}$. Since $\cut(\phi)$ is closed, we have $\cut(\phi)=\overline{\cut(\phi)}$. Thus, $\cut(\phi)=\overline{\sing(\phi)}$.

(b) $\Rightarrow$ (c): If $\gamma:(-\infty,0]\to\T^n$ is a $(\phi,H)$-calibrated curve, then $\gamma(t)\in\T^n\setminus\cut(\phi)$ for all $t<0$. It follows that $\gamma(t)\in\text{supp}_{C^{1,1}}(\phi)$, since $\cut(\phi)=\overline{\sing(\phi)}$ and $\T^n\setminus\overline{\sing(\phi)}=\text{supp}_{C^{1,1}}(\phi)$.

(c) $\Rightarrow$ (d): Since $\tau_{\phi,H}$ is upper semicontinuous by Proposition \ref{pro:tau-usc}, it is sufficient to show $\tau_{\phi,H}$ is lower semicontinuous. We prove it by contradiction. Suppose that for some $x\in\T^n$
\begin{align*}
	\liminf_{y\to x}\tau_{\phi,H}(y)=\bar{t}<\tau_{\phi,H}(x).
\end{align*}
Then there exists a sequence $y_i\to x$, as $i\to\infty$, such that $\lim_{i\to\infty}\tau_{\phi,H}(y_i)=\bar{t}$. 

Fix $\delta>0$ such that $\bar{t}+\delta<\tau_{\phi,H}(x)$, and let $\gamma:[0,\bar{t}+\delta]\to\T^n$ be the $(\phi,H)$-calibrated curve starting from $\gamma(0)=x$. If, by taking a subsequence, $\tau_{\phi,H}(y_i)=0$ for all $i\in\N$, we have $y_i\in\cut(\phi)$ for all $i\in\N$. This implies $x\in\overline{\cut(\phi)}=\overline{\sing(\phi)}$, which contradicts (c) since $x=\gamma(0)\in\text{supp}_{C^{1,1}}(\phi)=\T^n\setminus\overline{\sing(\phi)}$. Thus, by taking a subsequence, we have $0<\tau_{\phi,H}(y_i)<+\infty$ for all $i\in\N$. Now for any $i\in\N$, by Lemma \ref{lem:uniq cali}, there exists a unique $(\phi,H)$-calibrated curve $\gamma_i:[0,\tau_{\phi,H}(y_i)]\to\T^n$ starting from $\gamma_i(0)=y_i$, and we have
\begin{equation}\label{eq:cri pf}
	\gamma(t)=\pi\circ\Phi^t_H(x,D\phi(x)),\quad \forall t\in[0,\bar{t}+\delta],\qquad \gamma_i(t)=\pi\circ\Phi^t_H(y_i,D\phi(y_i)),\quad \forall t\in[0,\tau_{\phi,H}(y_i)],
\end{equation}
where $\pi:T^*\T^n\to\T^n$ is the canonical projection and $\Phi^t_H$ is the Hamiltonian flow for $H$. Due to the upper semicontinuity of the set-valued map $x\rightrightarrows D^+\phi(x)$, we have $\lim_{i\to\infty}D\phi(y_i)=D\phi(x)$. Together with \eqref{eq:cri pf} and the relation $\lim_{i\to\infty}\tau_{\phi,H}(y_i)=\bar{t}$,
\begin{equation}\label{eq:cri pf 2}
	\lim_{i\to\infty}\gamma_i(\tau_{\phi,H}(y_i))=\lim_{i\to\infty}\pi\circ\Phi^{\tau_{\phi,H}(y_i)}_H(y_i,D\phi(y_i))=\pi\circ\Phi^{\bar{t}}_H(x,D\phi(x))=\gamma(\bar{t}).
\end{equation}
If $\gamma_i(\tau_{\phi,H}(y_i))$ were regular, it would admit a $(\phi,H)$-calibrated continuation for a positive time interval. Concatenation would then extend $\gamma_i$ beyond $\tau_{\phi,H}(y_i)$, contradicting the maximality in Definition \ref{defn:tau_phi}. Hence $\gamma_i(\tau_{\phi,H}(y_i))\in\cut(\phi)$ for all $i\in\N$. Consequently, we obtain that $\gamma(\bar{t})\in\overline{\cut(\phi)}=\overline{\sing(\phi)}$ by \eqref{eq:cri pf 2}. This contradicts (c) which implies $\gamma(\bar{t})\in\text{supp}_{C^{1,1}}(\phi)=\T^n\setminus\overline{\sing(\phi)}$. Hence we proved
\begin{align*}
	\liminf_{y\to x}\tau_{\phi,H}(y)\geqslant\tau_{\phi,H}(x),\quad \forall x\in\T^n,
\end{align*}
i.e., $\tau_{\phi,H}$ is lower semicontinuous.

(d) $\Rightarrow$ (e): Let $x\in\reg(\phi)$ and choose a finite number $a\in(0,\tau_{\phi,H}(x))$. By continuity of $\tau_{\phi,H}$ in $[0,+\infty]$, there is a neighborhood $U$ of $x$ on which $\tau_{\phi,H}(y)\geqslant a$. Theorem~\ref{thm:1} (1) then gives
\[
R_\phi(y)\leqslant\max\{C_{1,1}/a,C_{1,2}\},\qquad y\in U.
\]
This covers finite and infinite cut times simultaneously.

(e) $\Rightarrow$ (a): By statement (e), for any $x\in\reg(\phi)$ there exists an open neighborhood $U$ of $x$ and $\lambda>0$, such that $R_\phi(y)\leqslant\lambda$ for all $y\in U$. It follows that $U\subset\reg(\phi)$, which implies $\reg(\phi)$ is open. Thus, $\cut(\phi)=\T^n\setminus\reg(\phi)$ is closed.
\end{proof}

\subsection{Propagation of cut points and Alexandrov points}

In this section, we apply the results obtained above to two problems for the evolutionary Hamilton--Jacobi equation \eqref{eq:HJe}, namely the global propagation of cut points along generalized characteristics and the propagation of Alexandrov points along calibrated curves, together with the evolution of the second derivative according to the matrix Riccati equation.

Let $H$ be a Tonelli Hamiltonian on $\T^n$, $\phi\in\SCL(\T^n)$, and $u$ be the viscosity solution to the Cauchy problem of evolutionary Hamilton--Jacobi equation \eqref{eq:HJephi pre}.
We call an absolutely continuous curve $\gamma:I\to\T^n$ a $(u,H)$-generalized characteristic if it satisfies the differential inclusion
\begin{equation}\label{eq:GC}\tag{GC}
		\dot{\gamma}(s)\in\text{\rm co}\,H_p(\gamma(s),D^+_xu(s,\gamma(s))),\qquad a.e.\ s\in I,
\end{equation}
where $\text{\rm co}$ denotes the convex hull. It is well known that if $\gamma:[0,T]\to\T^n$ is a $(u,H)$-calibrated curve, $D_x u(s,\gamma(s))$ exists for all $s\in[0,T)$, and it satisfies the classical characteristic equation
\begin{equation}\tag{CH}
	\dot{\gamma}(s)=H_p(\gamma(s),D_xu(s,\gamma(s))),\qquad \forall s\in[0,T).
\end{equation}
Obviously, a $(u,H)$-calibrated curve is a $(u,H)$-generalized characteristic. The following Theorem \ref{distance estimate} is a key tool for the propagation results of both cut points and Alexandrov points.

\begin{The}\label{distance estimate}
	Let $H$ be a Tonelli Hamiltonian on $\T^n$, $\phi\in\SCL(\T^n)$, and $u$ be the viscosity solution of \eqref{eq:HJephi pre}. Then there exist $C_1>0$ and $C_2>0$ (depending on the local $C^2$ modulus and the Tonelli modulus of $H$ and the Lipschitz and semiconcavity constants of $\phi$) such that for any $(u,H)$-calibrated curve $\gamma:[0,T]\to\T^n$ and $(u,H)$-generalized characteristic $\xi:[0,t]\to\T^n$ satisfying \eqref{eq:GC} with $0\leqslant t<T$,
	\begin{equation}\label{eq:curve no conju}
		e^{-C_1 T}(T-t)^{C_2}\cdot d(\xi(0),\gamma(0))\leqslant d(\xi(t),\gamma(t))\leqslant e^{C_1 T}(T-t)^{-C_2} \cdot d(\xi(0),\gamma(0)).
	\end{equation}
\end{The}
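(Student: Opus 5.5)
The plan is to prove both inequalities in \eqref{eq:curve no conju} with a single Gronwall argument. The growth rate of $d(\xi(s),\gamma(s))$ will be controlled by the degree of regularity of $u(s,\cdot)$ at $\gamma(s)$. Theorem~\ref{thm:1}~(1) bounds that degree by a constant over the remaining cut time, which is at least $T-s$; integrating the resulting $\frac{1}{T-s}$ singularity produces the factor $(T-t)^{\mp C_2}$.

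\textbf{Step 1: a uniform one-sided Lipschitz bound along $\gamma$.} Fix $s\in[0,t]\subset[0,T)$ and set $\phi_s=u(s,\cdot)$.

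By the semigroup property $T^-_{r}\phi_s=u(s+r,\cdot)$, the restriction of $\gamma$ to $[s,T]$, shifted in time, is calibrated for the Cauchy problem with initial datum $\phi_s$. Hence $\tau_{\phi_s,H}(\gamma(s))\geqslant T-s>0$.

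By Proposition~\ref{pro:uni lip semi}, the functions $\phi_s$ have Lipschitz and semiconcavity constants that do not depend on $s$. So the constants $C_{1,1},C_{1,2}$ of Theorem~\ref{thm:1} can be taken uniform in $s$. Theorem~\ref{thm:1}~(1) then gives
\[
|p-D_xu(s,\gamma(s))|\leqslant \lambda(s)\, d(y,\gamma(s)),\qquad \forall y\in\T^n,\ p\in D^+_xu(s,y),
\]
where $\lambda(s)=\max\{C_{1,1}/(T-s),\,C_{1,2}\}\leqslant C_{1,2}+C_{1,1}/(T-s)$.

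\textbf{Step 2: differential inequality.} Lift $\gamma$ and $\xi$ to $\R^n$, and set $f(s)=|\tilde\xi(s)-\tilde\gamma(s)|$. This function is Lipschitz, with $|\dot f|\leqslant|\dot{\tilde\xi}-\dot{\tilde\gamma}|$ almost everywhere.

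By \eqref{eq:GC}, almost everywhere $\dot\xi(s)=\sum_i\theta_iH_p(\xi(s),p_i)$, where the $\theta_i$ are convex weights and $p_i\in D^+_xu(s,\xi(s))$. By (CH), $\dot\gamma(s)=H_p(\gamma(s),q)$ with $q=D_xu(s,\gamma(s))$.

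All momenta involved lie in a fixed compact set, by the uniform Lipschitz bound. Hence $H_p$ is Lipschitz there with some constant $K$, and
\[
|\dot f(s)|\leqslant K\bigl(f(s)+\max_i|p_i-q|\bigr)\leqslant K(1+\lambda(s))f(s).
\]
In the last step I use $d\leqslant|\cdot|$ for the lifts, which makes Step 1 applicable.

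\textbf{Step 3: integration.} First,
\[
\int_0^t K(1+\lambda)\,ds\leqslant K(1+C_{1,2})T+KC_{1,1}\log\frac{T}{T-t}.
\]
Next, $T^{KC_{1,1}}\leqslant e^{KC_{1,1}T}$. Gronwall in both time directions then gives
\[
e^{-C_1T}(T-t)^{C_2}f(0)\leqslant f(t)\leqslant e^{C_1T}(T-t)^{-C_2}f(0),
\]
with $C_1=K(1+C_{1,2}+C_{1,1})$ and $C_2=KC_{1,1}$.

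\textbf{Passing to torus distances.} For the upper bound, choose the lift of $\xi$ with $f(0)=d(\xi(0),\gamma(0))$; then use $d(\xi(t),\gamma(t))\leqslant f(t)$. For the lower bound, choose instead the lift with $f(t)=d(\xi(t),\gamma(t))$, run the inequality backward, and use $d(\xi(0),\gamma(0))\leqslant f(0)$.

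\textbf{Main obstacle.} I expect the delicate point to be Step 1. It requires two things: that the cut time of the intermediate datum $u(s,\cdot)$ at $\gamma(s)$ really is at least $T-s$, which is the semigroup/calibration restriction argument; and that the constants of Theorem~\ref{thm:1} are uniform in $s$, which is where Proposition~\ref{pro:uni lip semi} is essential.
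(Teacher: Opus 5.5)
Your proposal is correct and follows the paper's proof essentially step by step. It uses the cut-time bound $\tau_{u(s,\cdot),H}(\gamma(s))\geqslant T-s$ together with Theorem~\ref{thm:1}~(1), made uniform in $s$ by Proposition~\ref{pro:uni lip semi}, to get the two-sided differential inequality with rate $C_0\bigl(1+C_{1,2}+C_{1,1}/(T-s)\bigr)$, and then applies Gronwall in both time directions, arriving at the same constants $C_1$, $C_2$. Your explicit choice of lifts for each direction is a small added precision; the paper handles this point implicitly by differentiating the torus distance directly.
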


\begin{proof}
	Notice that for any $s\in[0,t]$, we have $\tau_{u(s,\cdot),H}(\gamma(s))\geqslant T-s$. Invoking Proposition \ref{pro:uni lip semi} and Theorem \ref{thm:1} (1), there exist positive constants $C_{1,1}$, $C_{1,2}$ such that
	\begin{align*}
		R_{u(s,\cdot)}(\gamma(s))\leqslant\max\big\{\frac{C_{1,1}}{\tau_{u(s,\cdot),H}(\gamma(s))},C_{1,2}\big\}\leqslant\max\big\{\frac{C_{1,1}}{T-s},C_{1,2}\big\},\quad \forall s\in[0,t].
	\end{align*}
	This implies
	\begin{align*}
		|p-D_x u(s,\gamma(s))|\leqslant\max\big\{\frac{C_{1,1}}{T-s},C_{1,2}\big\}\cdot d(x,\gamma(s)),\quad \forall s\in[0,t],\ x\in\T^n,\ p\in D^+_x u(s,x).
	\end{align*}
	Thus, we have
	\begin{equation}\label{eq:sep pf 1}
		\begin{split}
			\left|\frac{d}{ds}d(\xi(s),\gamma(s))\right|\leqslant\ &|\dot{\xi}(s)-\dot{\gamma}(s)|
			=\ \bigg|\int_{D^+_xu(s,\xi(s))}H_p(\xi(s),p)\ d\nu_s-H_p(\gamma(s),D_xu(s,\gamma(s))\bigg|\\
			\leqslant\ &\,C_0\cdot d(\xi(s),\gamma(s))+C_0\int_{D^+_xu(s,\xi(s))}|p-D_xu(s,\gamma(s))|\ d\nu_s\\
			\leqslant\ &\,C_0\left(1+C_{1,2}+\frac{C_{1,1}}{T-s}\right)\cdot d(\xi(s),\gamma(s)),\quad a.e.\, s\in[0,t],
		\end{split}
	\end{equation}
	where $\{\nu_s\}$ is a family of probability measures supported on $D^+_xu(s,\xi(s))$ and
	\begin{align*}
		C_0=1+\sup_{x\in\T^n,\ |p|\leqslant P_*}\big\|D_{(x,p)}H_p(x,p)\big\|,
	\end{align*}
	with $P_*$ a uniform bound for the spatial supergradients of $u$, provided by Proposition \ref{pro:uni lip semi}. In the second inequality above we used the mean-value estimate
	\[
	|H_p(x,p)-H_p(y,q)|\leqslant C_0\big(d(x,y)+|p-q|\big),
	\]
	which holds for all $x,y\in\T^n$ and $|p|,|q|\leqslant P_*$. The resulting differential inequality is two-sided, so Gronwall's inequality applied to \eqref{eq:sep pf 1} and in reverse directly yields both sides of the separation estimate. Employing Gronwall's inequality to \eqref{eq:sep pf 1} in positive direction, we have
	\begin{align*}
		d(\xi(t),\gamma(t))&\leqslant e^{C_0(1+C_{1,2})t}(\frac{T}{T-t})^{C_0 C_{1,1}}\cdot d(\xi(0),\gamma(0))\\
		&\leqslant e^{C_0(1+C_{1,2})T}T^{C_0 C_{1,1}}(T-t)^{-C_0 C_{1,1}}\cdot d(\xi(0),\gamma(0))\\
		&\leqslant e^{C_0(1+C_{1,1}+C_{1,2})T}(T-t)^{-C_0 C_{1,1}}\cdot d(\xi(0),\gamma(0)).
	\end{align*}
	Similarly, employing Gronwall's inequality to \eqref{eq:sep pf 1} in negative direction, we have
	\begin{align*}
		d(\xi(0),\gamma(0))\leqslant e^{C_0(1+C_{1,1}+C_{1,2})T}(T-t)^{-C_0 C_{1,1}}\cdot d(\xi(t),\gamma(t)).
	\end{align*}
	Taking $C_1=C_0(1+C_{1,1}+C_{1,2})$ and $C_2=C_0 C_{1,1}$ we can obtain \eqref{eq:curve no conju}.
\end{proof}

\begin{Rem}
	For evolutionary equation \eqref{eq:HJephi pre} with $C^2$ initial data $\phi$, it is well known that \emph{no conjugate points occur before the cut locus}, that is, the differential of the endpoint map defined by Hamiltonian flow is non-degenerate along calibrated curves before the cut locus (\cite[Theorem 3.6]{Cannarsa_Sinestrari_book}). The separation estimate \eqref{eq:curve no conju} in Theorem \ref{distance estimate} expresses this fact for more general initial data $\phi$, which is only semiconcave with linear modulus, from the trajectory point of view.
\end{Rem}

\subsubsection{Global propagation of cut points}

\begin{sloppypar}
Now we consider the evolutionary Hamilton--Jacobi equation \eqref{eq:HJephi pre}. If $u:(0,\infty)\times\T^n\to\R$ is a viscosity solution of \eqref{eq:HJephi pre}, then $u\in\text{\rm SCL}_{\rm loc}\,((0,\infty)\times\T^n)$ and $u(t,\cdot)\in\text{\rm SCL}\,(\T^n)$ for all $t>0$.
\end{sloppypar}

\begin{Pro}\label{pro:Reg_Cut2}
Let $H$ be a Tonelli Hamiltonian on $\T^n$ and $u$ be a viscosity solution of \eqref{eq:HJe}. Then there holds
\begin{align*}
	\reg(u)=\bigcup_{t>0}\{t\}\times\reg(u(t,\cdot))=\reg(u,H)\cap\big((0,\infty)\times\T^n\big),\\
	\cut(u)=\bigcup_{t>0}\{t\}\times\cut(u(t,\cdot))=\cut(u,H)\cap\big((0,\infty)\times\T^n\big).
\end{align*}
In other words,
\begin{align*}
	\partial_P u(t,x)\neq\varnothing \Longleftrightarrow \partial_P u(t,\cdot)(x)\neq\varnothing,\qquad \forall (t,x)\in(0,+\infty)\times\T^n.
\end{align*}
\end{Pro}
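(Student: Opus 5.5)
The plan is to prove the slice-wise identity $(t,x)\in\reg(u)\Leftrightarrow x\in\reg(u(t,\cdot))$ for every $t>0$. Here $\reg(u)$ is understood via Definition~\ref{defn:various_sets}, applied locally on $(0,\infty)\times\T^n$ with convex space-time neighborhoods; this makes sense because $u\in\SCL_{\rm loc}((0,\infty)\times\T^n)$. Once the slice identity is available, the rest follows from earlier results:
\begin{itemize}
\item Theorem~\ref{thm:Reg_Cut}, applied to $\phi=u(t,\cdot)\in\SCL(\T^n)$, gives $\reg(u(t,\cdot))=\reg(u(t,\cdot),H)$.
\item Relation \eqref{eq:cut u rel} then identifies $\bigcup_{t>0}\{t\}\times\reg(u(t,\cdot))$ with $\reg(u,H)\cap((0,\infty)\times\T^n)$. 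The semigroup property $T^-_{s}u(t,\cdot)=u(t+s,\cdot)$ shows that $(u(t,\cdot),H)$-calibrated curves are time shifts of $(u,H)$-calibrated curves starting at time $t$.
\item The cut-locus statements follow by taking complements.
\item The proximal reformulation follows from Proposition~\ref{pro:prox}, applied once in $\R^{n+1}$ to $u$ and once in $\R^n$ to $u(t,\cdot)$.
\end{itemize}

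\emph{Forward direction.} Suppose a semiconvex function $\psi$ with linear modulus touches $u$ from below at $(t,x)$ in a convex neighborhood $V$. Then $\psi(t,\cdot)$ is semiconvex with the same modulus on the convex slice $V_t=\{y:(t,y)\in V\}$. It touches $u(t,\cdot)$ from below at $x$, so $x\in\reg(u(t,\cdot))$.

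\emph{Backward direction.} Let $x\in\reg(u(t,\cdot))$.
\begin{itemize}
\item By Theorem~\ref{thm:1}~(2), or directly by Theorem~\ref{thm:Reg_Cut}, we have $\tau_{u(t,\cdot),H}(x)>0$. Hence there exist $\delta>0$ and a $(u,H)$-calibrated curve $\gamma:[t,t+\delta]\to\T^n$ with $\gamma(t)=x$. Set $z=\gamma(t+\delta)$.
\item The Lax--Oleinik representation gives, for all $s<t+\delta$ and all $y$,
\[
u(t+\delta,z)\leqslant u(s,y)+A_{t+\delta-s}(y,z),
\]
with equality at $(s,y)=(t,x)$ by calibration.
\item Therefore the function $\psi(s,y)=u(t+\delta,z)-A_{t+\delta-s}(y,z)$ touches $u$ from below at $(t,x)$ on a small convex neighborhood, say $(t-\delta/2,t+\delta/2)\times B(x,r)$.
\end{itemize}
It remains to check that $\psi$ is semiconvex in $(s,y)$ jointly. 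Equivalently, $(\sigma,y)\mapsto A_\sigma(y,z)$ must be semiconcave with linear modulus on $\{\sigma\in[\delta/2,3\delta/2]\}\times B(x,r)$.

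I expect this joint time-space semiconcavity of the fundamental solution, with $\sigma$ bounded away from $0$, to be the main obstacle. It is classical (Cannarsa--Sinestrari, Chapter 6): $A_\sigma(\cdot,z)$ is the value function of a Tonelli problem, so it is locally semiconcave in $(\sigma,y)$ for $\sigma>0$. If a self-contained argument is preferred, I would prove it exactly as in Proposition~\ref{pro:uni lip semi}. Take the minimizer $\eta$ for $A_\sigma(y,z)$ and compare the perturbed data $(\sigma\pm h,y\pm w)$ using the affinely reparametrized and shifted curves $s\mapsto\eta\big(s\,\sigma/(\sigma\pm h)\big)\pm(1-s/(\sigma\pm h))w$. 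The uniform $C^2$ bounds on $L$ along bounded velocities (Proposition~\ref{pro:cali}~(3)) then give a second difference bounded by $C(|h|^2+|w|^2)$, with $C$ depending on $\delta$.
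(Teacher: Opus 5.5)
Your proof is correct, but your backward direction takes a different route from the paper's; the forward direction (restricting the space-time lower support to the slice $s=t$) is the same, except that the paper first smooths the support via Lemma~\ref{lem:smooth touching}.

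\textbf{How the two backward arguments differ.} The paper stays on the touching side. It takes a smooth $\psi$ touching $u(t,\cdot)$ from below at $x$, made global by a cutoff, and extends it in time by $v(s,\cdot)=T^-_{s-t}\psi$ for $s\geqslant t$ and $v(s,\cdot)=T^+_{t-s}\psi$ for $s<t$. For short times $v$ is a $C^{1,1}$ solution. The inequality $v\leqslant u$ then follows from monotonicity of $T^\pm$ together with the commutator inequality $T^+_{t-s}u(t,\cdot)=T^+_{t-s}T^-_{t-s}u(s,\cdot)\leqslant u(s,\cdot)$ from \cite{Cannarsa_Cheng_Hong2025}.

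You go through the variational side instead. Theorem~\ref{thm:Reg_Cut} supplies a forward calibrated curve from $(t,x)$, and $u(t+\delta,z)-A_{t+\delta-s}(y,z)$ serves as a space-time lower support. This is the space-time analogue of the support $\psi_t$ in the proof of Theorem~\ref{thm:1}~(1).

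\textbf{What each route buys.} Your route needs only the classical joint semiconcavity of $(\sigma,y)\mapsto A_\sigma(y,z)$ for $\sigma$ bounded away from $0$. Your reparametrization argument does give second differences of order $|h|^2+|w|^2$. You therefore avoid the external inputs from \cite{Cannarsa_Cheng_Hong2025} (short-time $C^{1,1}$ regularity of $T^\pm_s\psi$ and the commutator inequality) and the cutoff.

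The price is that your slice identity $\reg(u)=\bigcup_{t>0}\{t\}\times\reg(u(t,\cdot))$ depends on Theorem~\ref{thm:Reg_Cut}. The paper proves that identity by touching and semigroup arguments alone, and needs Theorem~\ref{thm:Reg_Cut} only to identify the result with $\reg(u,H)$. The paper's support is also a $C^{1,1}$ local solution of \eqref{eq:HJe}, not merely semiconvex.

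\textbf{Two small corrections.} First, the velocity bound for minimizers of $A_\sigma(y,z)$ with $\sigma\in[\delta/2,3\delta/2]$ does not follow from Proposition~\ref{pro:cali}~(3), because such minimizers need not be $(u,H)$-calibrated. It comes from the standard a priori compactness of Tonelli minimizers for times bounded below. Second, the inequality $u(t+\delta,z)\leqslant u(s,y)+A_{t+\delta-s}(y,z)$ needs $s\geqslant 0$. So take the time window $(t-\varepsilon,t+\varepsilon)$ with $\varepsilon<\min\{t,\delta/2\}$.
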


\begin{proof}
Let $(t,x)\in\reg(u)$. In view of Lemma \ref{lem:smooth touching}, $u$ is differentiable at $(t,x)$, and there exists an open neighborhood $\tilde{U}=(t-\delta,t+\delta)\times U$ of $(t,x)$ and $w\in C^{\infty}(\tilde{U})$ touching $u$ from below at $(t,x)$. It follows that $w(t,\cdot)\in C^{\infty}(U)$ touches $u(t,\cdot)$ from below at $x$. Thus $x\in\reg(u(t,\cdot))$ and
\begin{align*}
	\reg(u)\subset\bigcup_{t>0}\{t\}\times\reg(u(t,\cdot)).
\end{align*}

Now we turn to the opposite direction. Let $t>0$ and $x\in\reg(u(t,\cdot))$. By Lemma \ref{lem:smooth touching} and a standard cutoff argument, there exists $\psi\in C^{\infty}(\T^n)$ touching $u(t,\cdot)$ from below at $x$. Define
\begin{align*}
	v(s,y)=
	\begin{cases}
		T^-_{s-t}\psi(y),&s\geqslant t, y\in\T^n;\\
		T^+_{t-s}\psi(y),&s<t, y\in\T^n.
	\end{cases}
\end{align*}
It is clear that there exists $\delta>0$ such that $v$ is of class $C^{1,1}$ and it is a viscosity solution of \eqref{eq:HJe} on $(t-\delta,t+\delta)\times\T^n$ (see, for instance, \cite[Proposition 3.2]{Cannarsa_Cheng_Hong2025}). Therefore, for $t\leqslant s<t+\delta$ and $y\in\T^n$, by the order-preserving property of semigroups $\{T^{\pm}_t\}_{t\geqslant0}$, we have
\begin{align*}
	v(s,y)=T^-_{s-t}\psi(y)\leqslant (T^-_{s-t}u(t,\cdot))(y)=u(s,y).
\end{align*}
Similarly, by \cite[Corollary 3.15]{Cannarsa_Cheng_Hong2025}, for $t-\delta< s<t$ and $y\in\T^n$ we have
\begin{align*}
	v(s,y)=T^+_{t-s}\psi(y)\leqslant (T^+_{t-s}u(t,\cdot))(y)\leqslant u(s,y).
\end{align*}
It follows that $v$ touches $u$ from below at $(t,x)$ and this implies $(t,x)\in\reg(u)$. Thus
\begin{align*}
	\bigcup_{t>0}\{t\}\times\reg(u(t,\cdot))\subset\reg(u).
\end{align*}

Now we have
\begin{align*}
	\reg(u)=\bigcup_{t>0}\{t\}\times\reg(u(t,\cdot)),
\end{align*}
and consequently,
\begin{align*}
	\cut(u)=&\,\{(0,\infty)\times\T^n\}\setminus\reg(u)
	=\,\bigg(\bigcup_{t>0}\{t\}\times\T^n\bigg)\setminus\bigg(\bigcup_{t>0}\{t\}\times\reg(u(t,\cdot))\bigg)\\
	=&\,\bigcup_{t>0}\{t\}\times\big(\T^n\setminus\reg(u(t,\cdot))\big)
	=\,\bigcup_{t>0}\{t\}\times\cut(u(t,\cdot)).
\end{align*} 
Finally, relation \eqref{eq:cut u rel} and Proposition \ref{pro:prox} lead to the remaining relations and equivalence.
\end{proof}

The following Corollary \ref{cor:evcut0} is a direct consequence of Theorem \ref{thm:cut0}, Proposition \ref{pro:Reg_Cut2} and Fubini's theorem.

\begin{Cor}\label{cor:evcut0}
	Let $H$ be a Tonelli Hamiltonian on $\T^n$ and $u$ be a viscosity solution of \eqref{eq:HJe}. Then we have $\mathscr{L}(\cut(u))=0$.
\end{Cor}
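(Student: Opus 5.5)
The plan is to combine Theorem~\ref{thm:cut0} with Proposition~\ref{pro:Reg_Cut2} and then apply Fubini's theorem on $(0,\infty)\times\T^n$.

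\emph{Step 1: slice description.} By Proposition~\ref{pro:Reg_Cut2} we have
\begin{align*}
\cut(u)=\bigcup_{t>0}\{t\}\times\cut(u(t,\cdot)),
\end{align*}
so for each $t>0$ the $t$-slice of $\cut(u)$ is exactly $\cut(u(t,\cdot))$. Since $u(t,\cdot)\in\SCL(\T^n)$ for every $t>0$, Theorem~\ref{thm:cut0} gives $\mathscr{L}^n(\cut(u(t,\cdot)))=0$ for every $t>0$.

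\emph{Step 2: measurability.} Before Fubini can be used, $\cut(u)$ must be Lebesgue measurable in $(0,\infty)\times\T^n$. This is the only genuine obstacle.

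I would first try to show that $\cut(u)$ is Borel. By Proposition~\ref{pro:Reg_Cut2} and Proposition~\ref{pro:prox}, $\reg(u)$ is the set of points where $\partial_P u\neq\varnothing$. By Proposition~\ref{pro:touching_diff_reg}(ii), applied to the semiconcave function $u$ on $(0,\infty)\times\T^n$, this set can be written as
\begin{align*}
\reg(u)=\bigcup_{k\in\N}A_k,
\end{align*}
where $A_k$ consists of the points $(t,x)$ such that $u$ is differentiable at $(t,x)$ and
\begin{align*}
|u(s,y)-u(t,x)-Du(t,x)\cdot(s-t,y-x)|\leqslant k\,|(s-t,y-x)|^2
\end{align*}
for all $|(s-t,y-x)|<1/k$.

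Each $A_k$ is relatively closed in $(0,\infty)\times\T^n$. The argument mirrors Proposition~\ref{lem:R_phi}(2): take a limit of points of $A_k$ and a limit of their gradients, which lies in the superdifferential $D^+u$. The uniform quadratic estimate, evaluated at the limit point itself, forces that superdifferential to be a singleton, so $u$ is differentiable at the limit. The estimate then persists in the limit, shrinking the radius slightly if needed, and since the union is over all $k$ this does no harm. Hence $\reg(u)$ is $F_\sigma$ and $\cut(u)$ is Borel.

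As a fallback, I could avoid measurability entirely. First show $\mathscr{L}(\reg(u)\cap K)=\mathscr{L}(K)$ for compact $K$, using measurable inner sets such as $\{t\}\times E(u(t,\cdot),H,\delta)$ built from the compact sets of Proposition~\ref{pro:tau-usc}. This would show that $\cut(u)$ is contained in a null set, and completeness of Lebesgue measure then finishes.

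\emph{Step 3: conclusion.} Fubini (Tonelli) gives
\begin{align*}
\mathscr{L}(\cut(u))=\int_0^\infty\mathscr{L}^n(\cut(u(t,\cdot)))\,dt=0.
\end{align*}
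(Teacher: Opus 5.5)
Your proposal is correct and follows the paper's own argument: the slice identity $\cut(u)=\bigcup_{t>0}\{t\}\times\cut(u(t,\cdot))$ from Proposition~\ref{pro:Reg_Cut2}, then Theorem~\ref{thm:cut0} applied to each $u(t,\cdot)\in\SCL(\T^n)$, then Fubini. The paper leaves measurability implicit, and your $F_\sigma$ description of $\reg(u)$ supplies it correctly: the limiting two-sided quadratic bound, which holds on the same radius, already gives differentiability at the limit point, so the fallback argument is not needed.
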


The following Theorem \ref{thm:pro_cut} is the global propagation of cut points along generalized characteristics for the evolutionary Hamilton--Jacobi equation.

\begin{The}\label{thm:pro_cut}
Let $H$ be a Tonelli Hamiltonian on $\T^n$ and $u$ be a viscosity solution of \eqref{eq:HJe}.
\begin{enumerate}[\rm (1)]
	\item If $\bar{\gamma}:(0,T]\to\T^n$ is a $(u,H)$-calibrated curve, then for any $t\in(0,T)$, $\bar{\gamma}$ is the unique solution of \eqref{eq:GC} with $\gamma(t)=\bar{\gamma}(t)$ on $(0,T]$.
	\item If $\gamma:[t_0,\infty)\to\T^n$ is a solution of \eqref{eq:GC} with $(t_0,\gamma(t_0))\in\cut(u)$, then $(s,\gamma(s))\in\cut(u)$ for all $s\geqslant t_0$.
\end{enumerate}
\end{The}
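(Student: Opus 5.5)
Part (1) will follow from the separation estimate, Theorem~\ref{distance estimate}. Fix $t\in(0,T)$ and let $\xi$ be any solution of \eqref{eq:GC} with $\xi(t)=\bar\gamma(t)$.

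For forward uniqueness on $[t,T)$, take any $t'\in(t,T)$ and consider the restriction of $\bar\gamma$ to $[t,T]$. This restriction is a calibrated curve for the viscosity solution $u(t+\cdot,\cdot)=T^-_{\cdot}u(t,\cdot)$, whose initial datum $u(t,\cdot)$ is semiconcave with linear modulus. The restriction of $\xi$ to $[t,t']$ is likewise a generalized characteristic for this shifted solution. Applying the right-hand inequality of \eqref{eq:curve no conju} with initial time $t$ gives
\[
d(\xi(t'),\bar\gamma(t'))\leqslant e^{C_1T}(T-t')^{-C_2}\,d(\xi(t),\bar\gamma(t))=0 .
\]
This yields $\xi=\bar\gamma$ on $[t,T)$, and continuity extends the equality to $T$.

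Backward uniqueness is handled in the same way. For $s\in(0,t)$, apply Theorem~\ref{distance estimate} to the solution shifted to start at time $s$, with the calibrated curve $\bar\gamma|_{[s,T]}$ and the generalized characteristic $\xi|_{[s,t]}$. The left inequality gives $d(\xi(s),\bar\gamma(s))\leqslant e^{C_1T}(T-t)^{-C_2}d(\xi(t),\bar\gamma(t))=0$.

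The constants $C_1,C_2$ are uniform in the shift. This is because Proposition~\ref{pro:uni lip semi} gives Lipschitz and semiconcavity constants of $u(s,\cdot)$ that are uniform in $s$; when $u$ is only defined on $(0,\infty)$, one first starts from some $s_0>0$.

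For part (2), I argue by contradiction. Suppose $(s_1,\gamma(s_1))\in\reg(u)$ for some $s_1>t_0$. By Proposition~\ref{pro:Reg_Cut2} and Theorem~\ref{thm:Reg_Cut}, $\gamma(s_1)$ is a regular point of $u(s_1,\cdot)$ for $H$, so there is a calibrated curve $\bar\gamma:[s_1,s_1+\delta]\to\T^n$ with $\bar\gamma(s_1)=\gamma(s_1)$. Proposition~\ref{pro:cali}(1) supplies a calibrated curve on $[0,s_1]$ ending at $\gamma(s_1)$. Since $\gamma(s_1)$ is regular, hence a differentiability point, the two pieces have the same momentum $D_xu(s_1,\gamma(s_1))$ at the junction. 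Concatenation therefore produces a $(u,H)$-calibrated curve $\bar\gamma$ on $[0,s_1+\delta]$.

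This concatenation step is the one needing care: I must check that the combined action equals the increment of $u$ over the whole interval. This follows from additivity, since $u(s_1+\delta,\bar\gamma(s_1+\delta))-u(0,\bar\gamma(0))$ splits into the two calibrated increments.

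Now $\gamma|_{[t_0,s_1]}$ is a solution of \eqref{eq:GC} that agrees with $\bar\gamma$ at time $s_1\in(t_0,s_1+\delta)$. Applying part (1) on the interval $[t_0,s_1+\delta]$ (with the time-shifted version of the solution so that the domain begins at $t_0>0$), the backward estimate forces $\gamma=\bar\gamma$ on $[t_0,s_1]$. In particular $\gamma$ coincides near $t_0$ with a calibrated curve defined beyond $t_0$. Hence $(t_0,\gamma(t_0))$ admits a calibrated curve on $[t_0,t_0+\delta']$, so it lies in $\reg(u,H)$, which by Proposition~\ref{pro:Reg_Cut2} equals $\reg(u)$. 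This contradicts $(t_0,\gamma(t_0))\in\cut(u)$.

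The main obstacle is the backward direction of the separation estimate: one must make sure Theorem~\ref{distance estimate} applies with $t_0$ as the initial time of a shifted Cauchy problem. The endpoint issue is that $t=s_1<T=s_1+\delta$ strictly, so $(T-t)^{-C_2}$ stays finite, and this is exactly what the regularity at $s_1$ provides.
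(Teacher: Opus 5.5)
Your proposal is correct and follows essentially the same route as the paper. Part (1) comes from the two-sided separation estimate of Theorem~\ref{distance estimate} after a time shift, using the right inequality forward and the left inequality backward. Part (2) argues by contradiction: you produce a calibrated curve through the regular point $(s_1,\gamma(s_1))$ and use backward uniqueness to force $\gamma(t_0)$ onto it. You make explicit the time shift and the concatenation of calibrated curves, which the paper leaves implicit when it writes $\bar\gamma:(0,t+\delta]\to\T^n$.
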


\begin{proof}
Let $\gamma:(0,T]\to\T^n$ be a solution of \eqref{eq:GC} with $\gamma(t)=\bar{\gamma}(t)$. By Theorem \ref{distance estimate}, for any $s\in(0,T)$, there exists $C_s>0$ such that
\begin{align*}
	d(\gamma(s),\bar{\gamma}(s))\leqslant C_s\cdot d(\gamma(t),\bar{\gamma}(t))=0,
\end{align*}
that is, $\gamma(s)=\bar{\gamma}(s)$. Letting $s\to T$, we have $\gamma(T)=\bar{\gamma}(T)$. This implies the required uniqueness.

We prove (2) by contradiction. Suppose there exists $t>t_0$ such that $(t,\gamma(t))\in\reg(u)$. By Proposition \ref{pro:Reg_Cut2} and Theorem \ref{thm:Reg_Cut}, we observe that $\gamma(t)\in\reg(u(t,\cdot))=\reg(u(t,\cdot),H)$. Thus, there exists $\delta>0$ and a $(u,H)$-calibrated curve $\bar{\gamma}:(0,t+\delta]\to\T^n$ such that $\bar{\gamma}(t)=\gamma(t)$. From (1) we have $\gamma(s)=\bar{\gamma}(s)$ for all $s\in[t_0,t+\delta]$. This implies $\gamma(t_0)=\bar{\gamma}(t_0)\in\reg(u(t_0,\cdot),H)=\reg(u(t_0,\cdot))$, i.e., $(t_0,\gamma(t_0))\in\reg(u)$. This leads to a contradiction with $(t_0,\gamma(t_0))\in\cut(u)$. Thus we have $(s,\gamma(s))\in\cut(u)$ for all $s\geqslant t_0$.
\end{proof}

\begin{Rem}
For weak KAM solutions of the stationary equation \eqref{eq:HJs}, the statement of Theorem~\ref{thm:pro_cut} was obtained in \cite[Theorem 5.7]{CCHW2024}.
\end{Rem}

\subsubsection{Propagation of Alexandrov points}

In this subsection, we investigate the propagation of Alexandrov points of the viscosity solution to \eqref{eq:HJe}, together with the evolution of the second derivative governed by the matrix Riccati equation.

\begin{The}\label{the:propagate alex}
	Suppose $H$ is a Tonelli Hamiltonian on $\T^n$, $\phi\in \SCL(\T^n)$, and $u$ is the viscosity solution of \eqref{eq:HJephi pre}. Let $\gamma: [0,T] \to \T^n$ be a $(u,H)$-calibrated curve with dual arc $p(t) = L_v(\gamma(t),\dot{\gamma}(t))$ for $t \in [0,T]$. The following statements are true:
	
	\begin{enumerate}
		\item 	If $\gamma(t_0)\in\operatorname{Alex}(u(t_0,\cdot))$ for some $t_0\in[0,T)$, then $\gamma(t)\in\operatorname{Alex}(u(t,\cdot))$ for all $t\in[t_0,T)$. Moreover, the map $t\mapsto Q(t)=D^2_xu(t,\gamma(t))$ is of class $C^1$ on $[t_0,T)$ and $Q(t)$ satisfies the following matrix Riccati equation
		\begin{equation}\label{eq:Ricc}
			\dot{Q}(t)+Q(t)\cdot H_{pp}(\gamma(t),p(t))\cdot Q(t)+Q(t)\cdot H_{px}(\gamma(t),p(t))+H_{xp}(\gamma(t),p(t))\cdot Q(t)+H_{xx}(\gamma(t),p(t))=0. \tag{Ricc}
		\end{equation}		
		\item  If $\gamma(t_0)\in\operatorname{Alex}(u(t_0,\cdot))\cap \operatorname{supp}_{C^{1,1}}(u(t_0,\cdot))$ for some $t_0\in[0,T)$, where $\text{supp}_{C^{1,1}}(u(t_0,\cdot))$ is the $C^{1,1}$ support of $u(t_0,\cdot)$ (see the definition before Theorem \ref{thm:close_cut}), then $\gamma(t)\in\operatorname{Alex}(u(t,\cdot))$ for all $t\in[0,T)$. Moreover, the map $t\mapsto Q(t)=D^2_xu(t,\gamma(t))$ is of class $C^1$ on $[0,T)$ and $Q(t)$ satisfies \eqref{eq:Ricc}.
	\end{enumerate}
\end{The}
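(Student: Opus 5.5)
We plan to transport the second-order expansion of $u(t_0,\cdot)$ at $\gamma(t_0)$ along the Hamiltonian flow. We work through the superdifferential characterization of Proposition \ref{pro:Alex2}, which reduces twice differentiability to a first-order statement about supergradients, and we control nearby supergradients of $u(t,\cdot)$ through calibrated curves. Fix $t\in(t_0,T)$ and $x$ near $\gamma(t)$. By Proposition \ref{pro:cali} (1), every $p\in D^*_xu(t,x)$ is the momentum at time $t$ of a calibrated curve $\xi:[t_0,t]\to\T^n$ with $\xi(t)=x$, and $\xi(t_0)$ is a regular point with $D_xu(t_0,\xi(t_0))$ existing. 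Therefore $(x,p)=\Phi^{t-t_0}_H(\xi(t_0),D_xu(t_0,\xi(t_0)))$.

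Since $\xi$ is a generalized characteristic and $t<T$, Theorem \ref{distance estimate} applied on $[t_0,T]$ gives $d(\xi(t_0),\gamma(t_0))\leqslant C\,d(x,\gamma(t))$. Write $y=\xi(t_0)$ and $x_0=\gamma(t_0)$. The Alexandrov property at $x_0$, via Proposition \ref{pro:Alex2}, yields
\[
D_xu(t_0,y)=p(t_0)+Q_0(y-x_0)+o(|y-x_0|),\qquad Q_0:=D^2_xu(t_0,x_0).
\]
Next we apply the $C^1$ flow $\Phi^{s}_H$ with $s=t-t_0$. Let $\begin{pmatrix}A&B\\ C'&D\end{pmatrix}=D\Phi^{s}_H(x_0,p(t_0))$. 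Then
\[
x-\gamma(t)=(A+BQ_0)(y-x_0)+o(|y-x_0|),\qquad p-p(t)=(C'+DQ_0)(y-x_0)+o(|y-x_0|).
\]

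The main obstacle is inverting the first relation, that is, showing $A+BQ_0$ is invertible. We would derive this from the separation estimate. Take $y$ on lines $x_0+\varepsilon v$; these are regular points for small $\varepsilon$ only almost everywhere. To avoid that restriction, use instead the linearization: if $(A+BQ_0)v=0$ with $|v|=1$, consider points $y_k\to x_0$ with $(y_k-x_0)/|y_k-x_0|\to v$ and $y_k\in\reg(u(t_0,\cdot))$ with $\tau\geqslant t-t_0$. Such points exist because, by Theorem \ref{thm:1}(2) together with the Alexandrov expansion, $R_{u(t_0,\cdot)}$ is bounded near... Actually a cleaner route: the calibrated curve $\gamma$ extends past $t$ (to $T$), so the lower bound in \eqref{eq:curve no conju} applied to the curves issuing from such $y_k$ gives $|x_k-\gamma(t)|\geqslant c|y_k-x_0|$, forcing $|(A+BQ_0)v|\geqslant c$. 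If regular points along direction $v$ are an issue, we would alternatively argue with the linearized (Jacobi) system and the Riccati comparison: $Q_0\geqslant -\lambda I$, and the upper bound $R\lesssim1/\tau$ shows the linearized solution cannot degenerate before $T$.

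Granting invertibility, we get $p-p(t)=Q(t)(x-\gamma(t))+o(|x-\gamma(t)|)$ uniformly for $p\in D^*_xu(t,x)$, with $Q(t)=(C'+DQ_0)(A+BQ_0)^{-1}$. By convexity the same expansion holds for $D^+_xu=\operatorname{co}D^*_xu$, and Proposition \ref{pro:Alex2} gives $\gamma(t)\in\operatorname{Alex}(u(t,\cdot))$. The formula for $Q(t)$ is the standard Lagrangian-graph transport of the linearized Hamiltonian flow, so differentiating in $t$ yields \eqref{eq:Ricc} and $C^1$ regularity.

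For (2), membership in $\operatorname{supp}_{C^{1,1}}$ lets us go backward. Near $\gamma(t_0)$, $u(t_0,\cdot)$ is $C^{1,1}$, so every nearby point is regular for the backward problem. Backward calibrated curves from points near $\gamma(t_0)$ are obtained via $\Phi^{-s}_H(y,D_xu(t_0,y))$ and depend Lipschitz on $y$. The same linearization argument then runs in reverse time, where invertibility of $A+BQ_0$ follows from the bi-Lipschitz backward map. Once twice differentiability holds on $[0,t_0]$, the Riccati equation follows as above.
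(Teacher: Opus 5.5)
Your reduction in part (1) is sound: $D\Phi^{t-t_0}_H(x_0,p(t_0))$ is exactly the paper's state-transition matrix $\Phi(t,t_0)$. Given $|y-x_0|\leqslant C|x-\gamma(t)|$, the rest goes through once $A+BQ_0$ is invertible.

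\textbf{Gap in part (1).} That invertibility, which you rightly call the main obstacle, is not proved.

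\emph{Your kernel-direction argument.} It needs points $y_k\to x_0$, tangent to $v$, with $\tau_{u(t_0,\cdot),H}(y_k)\geqslant t-t_0$. Nothing guarantees such points exist. An Alexandrov point can be a limit of cut points, so $R_{u(t_0,\cdot)}$ need not be bounded near $x_0$. Even if it were bounded, Theorem~\ref{thm:1}(2) would only give a cut time $\min\{C_{2,1}/R,C_{2,2}\}$, not $t-t_0$.

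\emph{Your Riccati fallback.} It does not work as stated. With the non-sharp constants of Theorem~\ref{thm:1}(1), the bound $Q_0\geqslant-\max\{C_{1,1}/(T-t_0),C_{1,2}\}I$ at best rules out degeneration on an initial subinterval of length $c(T-t_0)$. Reaching every $t<T$ would need a matrix Riccati comparison principle and a continuation argument, and you supply neither.

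\emph{The fix.} Perturb at the terminal time, where test curves always exist. Suppose $A+BQ_0$ is singular. Pick a unit vector $\theta\perp\operatorname{Im}(A+BQ_0)$. For $\varepsilon>0$, Proposition~\ref{pro:cali}(1) gives a calibrated curve ending at $x_\varepsilon=\gamma(t)+\varepsilon\theta$. Its point $y_\varepsilon$ at time $t_0$ satisfies $|y_\varepsilon-x_0|\leqslant C\varepsilon$, by the separation bound you already derived. Hence
\[
\varepsilon=\langle x_\varepsilon-\gamma(t),\theta\rangle=\langle (A+BQ_0)(y_\varepsilon-x_0),\theta\rangle+o(|y_\varepsilon-x_0|)=o(\varepsilon),
\]
a contradiction. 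This is the paper's argument.

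\textbf{Gap in part (2).} Some of your steps are fine. The backward map $G(y)=\pi\circ\Phi^{-t_0}_H(y,D_xu(t_0,y))$ is well defined on a $C^{1,1}$ neighborhood $U$ of $\gamma(t_0)$, because at differentiability points the backward calibrated curve is unique. Invertibility of the backward linearization $N$ also works here: every $y=\gamma(t_0)+\varepsilon\theta$ is admissible, and Theorem~\ref{distance estimate} gives $|y-\gamma(t_0)|\leqslant C|G(y)-\gamma(0)|$.

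What is missing is coverage of a full neighborhood. Proposition~\ref{pro:Alex2} at $\gamma(0)$ needs the first-order expansion of $D^+\phi(z)$ for \emph{all} $z$ near $\gamma(0)$, but you only control $z\in G(U)$. You must show $G(U)$ contains a neighborhood of $\gamma(0)$, i.e.\ that points near $\gamma(0)$ have calibrated curves on $[0,t_0]$. This is exactly where the paper spends the $C^{1,1}$ hypothesis: it proves $\tau_{\phi,H}\geqslant t_0$ on a neighborhood of $\gamma(0)$, using the space-time $C^{1,1}$-support results and a semicontinuity argument.

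In your setting this can be closed more cheaply. $G$ is continuous. It is injective, because each $G(y)$ is a differentiability point of $\phi$ with initial momentum $D\phi(G(y))$, and the Hamiltonian flow is injective. Invariance of domain then makes $G(U)$ open. With this addition, your backward-map route for (2) is a valid alternative to the paper's.
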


\begin{proof}
	We first observe that the pair $(\gamma,p)$ satisfies the Hamiltonian equation
	\begin{equation}\label{eq:H ode 1}
		\begin{cases}
			\dot{\gamma}(s)=H_p(\gamma(s),p(s))\\
			\dot{p}(s)=-H_x(\gamma(s),p(s))
		\end{cases}
		\qquad s\in[0,T].
	\end{equation}

	Fix $t\in(t_0,T)$ and let $\tilde{\gamma}:[t_0,t]\to\T^n$ be an arbitrary $(u,H)$-calibrated curve, with $\tilde{p}(\cdot)$ the dual arc of $\tilde{\gamma}(\cdot)$. Then, $\tilde{p}(s)=D_xu(s,\tilde{\gamma}(s))$ for all $s\in[t_0,t)$ and $\tilde{p}(t)\in D^*_xu(t,\tilde{\gamma}(t))$. The pair $(\tilde{\gamma},\tilde{p})$ also satisfies Hamiltonian equation
	\begin{equation}\label{eq:H ode 2}
		\begin{cases}
			\dot{\tilde{\gamma}}(s)=H_p(\tilde{\gamma}(s),\tilde{p}(s))\\
			\dot{\tilde{p}}(s)=-H_x(\tilde{\gamma}(s),\tilde{p}(s))
		\end{cases}
		\qquad s\in[t_0,t].
	\end{equation}

	Without loss of generality we work on the universal covering space $\R^n$. Set
	\[
	\bar{\gamma}(s)=\tilde{\gamma}(s)-\gamma(s),\qquad \bar{p}(s)=\tilde{p}(s)-p(s),\qquad s\in[t_0,t].
	\]
	
	Since $\gamma(t_0)\in\operatorname{Alex}(u(t_0,\cdot))$, due to Proposition \ref{pro:Alex2} we have that
	\begin{equation}\label{eq:alex pf 1}
		\begin{split}
			\bar{p}(t_0)&=\tilde{p}(t_0)-p(t_0)=D_xu(t_0,\tilde{\gamma}(t_0))-D_xu(t_0,\gamma(t_0))\\
			&=Q(t_0)\cdot(\tilde{\gamma}(t_0)-\gamma(t_0))+o(|\tilde{\gamma}(t_0)-\gamma(t_0)|)=Q(t_0)\cdot\bar{\gamma}(t_0)+o(|\bar{\gamma}(t_0)|).
		\end{split}
	\end{equation}
	
	In view of the Lipschitz dependence of the data at time $t_0$ for the two Hamiltonian equations \eqref{eq:H ode 1} and \eqref{eq:H ode 2} and the relation \eqref{eq:alex pf 1} above, we conclude that there exists $C>0$ such that 
	\begin{equation}\label{eq:alex pf 01}
		\max	\{|\bar{\gamma}(s)|,|\bar{p}(s)|\}\leqslant C|\bar{\gamma}(t_0)|,\qquad \forall s\in[t_0,t],
	\end{equation}
	for sufficiently small $|\bar{\gamma}(t_0)|$. By Proposition \ref{pro:cali} (3), both $(\gamma,p)$ and $(\tilde{\gamma},\tilde{p})$ are uniformly bounded. Thus, combining \eqref{eq:H ode 1}, \eqref{eq:H ode 2} and \eqref{eq:alex pf 01} we obtain that
	\begin{equation}\label{eq:alex pf 2}
		\begin{pmatrix}
			\dot{\bar{\gamma}}(s)\\
			\dot{\bar{p}}(s)
		\end{pmatrix}
		=
		\begin{pmatrix}
			H_{px}(\gamma(s),p(s)) & H_{pp}(\gamma(s),p(s))\\
			-H_{xx}(\gamma(s),p(s)) & -H_{xp}(\gamma(s),p(s))
		\end{pmatrix}
		\cdot
		\begin{pmatrix}
			\bar{\gamma}(s)\\
			\bar{p}(s)
		\end{pmatrix}+o(|\bar\gamma(t_0)|),\quad s\in[t_0,t].
	\end{equation}
	
	Invoking Theorem \ref{distance estimate}, there exist constants $C_1,C_2>0$ depending on $(u,H)$ and $T$ such that
	\begin{equation}\label{distance estimate 2}
		\frac{(T-t)^{C_2}}{e^{C_1(T-t_0)}}|\bar{\gamma}(t_0)|\leqslant|\bar{\gamma}(t)|\leqslant \frac{e^{C_1(T-t_0)}}{(T-t)^{C_2}}|\bar{\gamma}(t_0)|.
	\end{equation}
	Hence the $o(|\bar\gamma(t_0)|)$ remainders in \eqref{eq:alex pf 1} and \eqref{eq:alex pf 2} are uniform over all $\tilde{\gamma}$ ending near $\gamma(t)$ and all $s\in[t_0,t]$. Indeed, by Proposition \ref{pro:cali} (3) and \eqref{eq:alex pf 01}, all the curves under consideration lie in a compact region of the phase space $T^*\T^n$, where the Taylor remainder of the Hamiltonian vector field is uniform. As a system of homogeneous linear ordinary differential equations with variable coefficients, equation
	\[
	\begin{pmatrix}
		\dot{\bar{\gamma}}(s)\\
		\dot{\bar{p}}(s)
	\end{pmatrix}
	=
	\begin{pmatrix}
		H_{px}(\gamma(s),p(s)) & H_{pp}(\gamma(s),p(s))\\
		-H_{xx}(\gamma(s),p(s)) & -H_{xp}(\gamma(s),p(s))
	\end{pmatrix}
	\cdot
	\begin{pmatrix}
		\bar{\gamma}(s)\\
		\bar{p}(s)
	\end{pmatrix},\quad s\in[0,T],
	\]
	admits $2n\times 2n$ state-transition matrix
	\[
	\Phi(t_2,t_1)=
	\begin{pmatrix}
		\Phi_1(t_2,t_1) & \Phi_2(t_2,t_1)\\
		\Phi_3(t_2,t_1) & \Phi_4(t_2,t_1)
	\end{pmatrix}
	\qquad 0\leqslant t_1\leqslant t_2\leqslant T,
	\]
	where each $\Phi_k(t_2,t_1)$ is an $n\times n$ block matrix, $k=1,2,3,4$. By \eqref{eq:alex pf 1} and \eqref{eq:alex pf 2} we have
	\begin{align*}
		\begin{pmatrix}
			\bar{\gamma}(t)\\
			\bar{p}(t)
		\end{pmatrix}
		&=\Phi(t,t_0)\cdot
		\begin{pmatrix}
			\bar{\gamma}(t_0)\\
			\bar{p}(t_0)
		\end{pmatrix}+o(|\bar\gamma(t_0)|)
		=\Phi(t,t_0)\cdot
		\begin{pmatrix}
			\bar{\gamma}(t_0)\\
			Q(t_0)\bar{\gamma}(t_0)+o(|\bar{\gamma}(t_0)|)
		\end{pmatrix}+o(|\bar\gamma(t_0)|)\\
		&=
		\begin{pmatrix}
			\Phi_1(t,t_0) & \Phi_2(t,t_0)\\
			\Phi_3(t,t_0) & \Phi_4(t,t_0)
		\end{pmatrix}
		\cdot
		\begin{pmatrix}
			\bar{\gamma}(t_0)\\
			Q(t_0)\bar{\gamma}(t_0)
		\end{pmatrix}
		+o(|\bar{\gamma}(t_0)|).
	\end{align*}
	
	That is
	\begin{equation}\label{eq: bar ode}
		\begin{cases}
			\bar{\gamma}(t)=(\Phi_1(t,t_0)+\Phi_2(t,t_0)Q(t_0))\cdot\bar{\gamma}(t_0)+o(|\bar{\gamma}(t_0)|)\\
			\bar{p}(t)=(\Phi_3(t,t_0)+\Phi_4(t,t_0)Q(t_0))\cdot\bar{\gamma}(t_0)+o(|\bar{\gamma}(t_0)|).
		\end{cases}
	\end{equation}
	
	We claim that the matrix $\Phi_1(t,t_0)+\Phi_2(t,t_0)Q(t_0)$ is invertible. Otherwise
	\[
	V=(\Phi_1(t,t_0)+\Phi_2(t,t_0)Q(t_0))(\R^n)
	\]
	is a linear subspace of $\R^n$ with $\dim V\leqslant n-1$. Choose a unit vector $\theta\in\R^n$ orthogonal to $V$. Observe that for any $\tau>0$ there exists a $(u,H)$-calibrated curve $\tilde{\gamma}_\tau:[0,t]\to\T^n$ such that $\tilde{\gamma}_\tau(t)=\gamma(t)+\tau\theta$. Then $\bar{\gamma}_\tau(t)=\tilde{\gamma}_\tau(t)-\gamma(t)=\tau\theta$ is orthogonal to $V$, and $|\bar{\gamma}_\tau(t)|=\tau$. By \eqref{eq: bar ode} and \eqref{distance estimate 2}, we have
	\[
	\limsup_{\tau\to 0^+}\frac{|\bar{\gamma}_\tau(t)|}{|\bar{\gamma}_\tau(t_0)|}\leqslant\limsup_{\tau\to 0^+}\frac{|\bar{\gamma}_\tau(t)-(\Phi_1(t,t_0)+\Phi_2(t,t_0)Q(t_0))\cdot\bar{\gamma}_\tau(t_0)|}{|\bar{\gamma}_\tau(t_0)|}=0.
	\]
	This contradicts the relation in \eqref{distance estimate 2}.
	
	Now, since $\Phi_1(t,t_0)+\Phi_2(t,t_0)Q(t_0)$ is invertible, by \eqref{eq: bar ode} and \eqref{distance estimate 2} we obtain
	\[
	\bar{p}(t)=M(t,t_0)\cdot\bar{\gamma}(t)+o(|\bar{\gamma}(t)|),
	\]
	where we set
	\[
	M(t,t_0)=(\Phi_3(t,t_0)+\Phi_4(t,t_0)Q(t_0))\cdot(\Phi_1(t,t_0)+\Phi_2(t,t_0)Q(t_0))^{-1}.
	\]
	In other words,
	\begin{equation}\label{eq:alex pf 3}
		\lim_{\tilde{\gamma}(t)\to\gamma(t)}\frac{|\tilde{p}(t)-D_xu(t,\gamma(t))-M(t,t_0)\cdot(\tilde{\gamma}(t)-\gamma(t))|}{|\tilde{\gamma}(t)-\gamma(t)|}
		=\lim_{|\bar{\gamma}(t)|\to 0}\frac{|\bar{p}(t)-M(t,t_0)\cdot\bar{\gamma}(t)|}{|\bar{\gamma}(t)|}=0.
	\end{equation}
	
	Recall that $u$ is a viscosity solution of \eqref{eq:HJe}. For any $y\in\T^n$ and $p\in D^*_xu(t,y)$ there exists a $(u,H)$-calibrated curve $\tilde{\gamma}:[0,t]\to\T^n$ with $\tilde{\gamma}(t)=y$ and $\tilde{p}(t)=p$. Therefore, using the relation $D^+_xu(t,y)=\operatorname{co}D^*_xu(t,y)$ and \eqref{eq:alex pf 3} we have that
	\begin{align*}
		&\lim_{y\to\gamma(t)}\sup_{p\in D^+_xu(t,y)}\frac{|p-D_xu(t,\gamma(t))-M(t,t_0)\cdot(y-\gamma(t))|}{|y-\gamma(t)|}\\
		=&\lim_{y\to\gamma(t)}\sup_{p\in D^*_xu(t,y)}\frac{|p-D_xu(t,\gamma(t))-M(t,t_0)\cdot(y-\gamma(t))|}{|y-\gamma(t)|}\\
		=&\,0.
	\end{align*}
	
	This implies that $\gamma(t)\in\operatorname{Alex}(u(t,\cdot))$ for all $t\in[t_0,T)$ by Proposition \ref{pro:Alex2} and
	\begin{equation}\label{eq:alex pf 4}
		Q(t)=D^2_xu(t,\gamma(t))=M(t,t_0).
	\end{equation}
	
	Notice that the map $t\mapsto Q(t)$, $t\in[t_0,T)$ is of class $C^1$ since $t\mapsto\Phi(t,t_0)$ is of class $C^1$ and $Q(t)$ satisfies \eqref{eq:alex pf 4}. Moreover, for any $t_0\leqslant t\leqslant s<T$, \eqref{eq:alex pf 4} leads to
	\[
	Q(s)=(\Phi_3(s,t)+\Phi_4(s,t)Q(t))\cdot(\Phi_1(s,t)+\Phi_2(s,t)Q(t))^{-1}.
	\]
	
	Set $U(s)=\Phi_3(s,t)+\Phi_4(s,t)Q(t)$ and $V(s)=\Phi_1(s,t)+\Phi_2(s,t)Q(t)$. 	
	Using the matrix quotient rule, we obtain
	\begin{align*}
		\dot{Q}(s) &= \dot{U}(s)V(s)^{-1} - U(s)V(s)^{-1}\dot{V}(s)V(s)^{-1} \\
		&= \bigl(\dot{\Phi}_3 (s,t) + \dot{\Phi}_4 (s,t) Q(t)\bigr)V^{-1}(s) - Q(s)\bigl(\dot{\Phi}_1 (s,t) + \dot{\Phi}_2 (s,t) Q(t)\bigr)V^{-1}(s).
	\end{align*}
	Evaluating at $s = t$ and noting that $V(t) = I$, we find
	\begin{align*}
		\dot{Q}(t) &= \bigl(\dot{\Phi}_3(t,t) + \dot{\Phi}_4(t,t)Q(t)\bigr)V^{-1}(t) - Q(t)\bigl(\dot{\Phi}_1(t,t) + \dot{\Phi}_2(t,t)Q(t)\bigr)V^{-1}(t) \\
		&= -H_{xx}(\gamma(t),p(t)) - H_{xp}(\gamma(t),p(t))Q(t) - Q(t)H_{px}(\gamma(t),p(t)) - Q(t)H_{pp}(\gamma(t),p(t))Q(t).
	\end{align*}
	Thus, $Q(t)$ satisfies the matrix Riccati equation \eqref{eq:Ricc}. This completes the proof of (1).
	
We now turn to prove (2). If $t_0=0$, the assertion follows from part (1). We therefore assume $0<t_0<T$. If $\gamma(t_0)\in\operatorname{supp}_{C^{1,1}}(u(t_0,\cdot))$, according to \cite[Theorem 2.3]{Albano2014_1} and \cite[Proposition 3.2]{Cannarsa_Cheng_Hong2025}, there exists $\delta>0$ such that $t_0+\delta<T$ and
\begin{equation}\label{eq:alex pf 11}
	(t,\gamma(t))\in\operatorname{supp}_{C^{1,1}}(u)=\big([0,+\infty)\times\T^n\big)\setminus\overline{\cut(u,H)},\qquad \forall t\in[0,t_0+\delta].
\end{equation}
We first claim that there exists a neighborhood $W$ of $\gamma(0)$ such that
\begin{equation}\label{eq:backward tau}
	\tau_{\phi,H}(y)\geqslant t_0,\qquad \forall y\in W.
\end{equation}
Suppose, to the contrary, that there exists a sequence $y_i\to\gamma(0)$ with $\tau_{\phi,H}(y_i)<t_0$ and $\tau_{\phi,H}(y_i)\to\bar t\leqslant t_0$. If $\tau_{\phi,H}(y_i)=0$ along a subsequence, then $(0,y_i)\in\cut(u,H)$, and letting $i\to\infty$ we get $(0,\gamma(0))\in\overline{\cut(u,H)}$, which contradicts \eqref{eq:alex pf 11}. Thus, up to a subsequence, $0<\tau_{\phi,H}(y_i)<t_0$. By Lemma \ref{lem:uniq cali}, there exists a $(u,H)$-calibrated curve $\gamma_i:[0,\tau_{\phi,H}(y_i)]\to\T^n$ with $\gamma_i(0)=y_i$. If $(\tau_{\phi,H}(y_i),\gamma_i(\tau_{\phi,H}(y_i)))$ were regular, a $(u,H)$-calibrated continuation for a positive time interval would extend $\gamma_i$ beyond $\tau_{\phi,H}(y_i)$, contradicting the maximality in Definition \ref{defn:tau_phi}. Hence
\begin{equation}\label{eq:alex pf 12}
	(\tau_{\phi,H}(y_i),\gamma_i(\tau_{\phi,H}(y_i)))\in\cut(u,H),\qquad \forall i\in\N.
\end{equation}
For all $i\in\N$, since $y_i$ is a regular point of $\phi$, Lemma \ref{lem:uniq cali} yields the representation
\begin{equation}\label{eq:backward flow}
	\gamma_i(s)=\pi\circ\Phi^s_H(y_i,D\phi(y_i)),\ \forall s\in[0,\tau_{\phi,H}(y_i)],\quad \gamma(s)=\pi\circ\Phi^s_H(\gamma(0),D\phi(\gamma(0))),\ \forall s\in[0,t_0],
\end{equation}
where $\pi:T^*\T^n\to\T^n$ is the canonical projection and $\Phi^s_H$ is the Hamiltonian flow for $H$. The upper semicontinuity of $D^+\phi$ gives $D\phi(y_i)\to D\phi(\gamma(0))$. Together with \eqref{eq:backward flow} and $\tau_{\phi,H}(y_i)\to\bar t$, we obtain $(\tau_{\phi,H}(y_i),\gamma_i(\tau_{\phi,H}(y_i)))\to(\bar t,\gamma(\bar t))$. Combing this with \eqref{eq:alex pf 12}, it follows that $(\bar t,\gamma(\bar t))\in\overline{\cut(u,H)}$, which contradicts \eqref{eq:alex pf 11} above because $\bar t\leqslant t_0<t_0+\delta$. Therefore \eqref{eq:backward tau} holds.

For each $y\in W$ we select the unique $(u,H)$-calibrated curve $\gamma_y:[0,t_0]\to\T^n$ with $\gamma_y(0)=y$, which exists and is unique by \eqref{eq:backward tau} and Lemma \ref{lem:uniq cali}. Applying the separation estimate in Theorem \ref{distance estimate} to the calibrated curve $\gamma_y$ on $[0,t_0]$ and $\gamma$ on $[0,T]$ with $t=t_0$, we get
\begin{equation}\label{eq:backward sep}
	\frac{(T-t_0)^{C_2}}{e^{C_1 T}}\cdot d(y,\gamma(0))\leqslant d(\gamma_y(t_0),\gamma(t_0))\leqslant\frac{e^{C_1 T}}{(T-t_0)^{C_2}}\cdot d(y,\gamma(0)).
\end{equation}
In particular, the endpoints $\gamma_y(t_0)$ remain close to $\gamma(t_0)$ uniformly as $y\to\gamma(0)$.

Let $p_y$ be the dual arc of $\gamma_y$. By Proposition \ref{pro:cali} (2) and (4), $p_y(0)=D\phi(y)$, the pair $(\gamma_y,p_y)$ satisfies the Hamiltonian system on $[0,t_0]$, and $p_y(t_0)\in D^+_xu(t_0,\gamma_y(t_0))$. Set $\bar\gamma(s)=\gamma_y(s)-\gamma(s)$ and $\bar p(s)=p_y(s)-p(s)$ for $s\in[0,t_0]$. Since $\gamma(t_0)\in\operatorname{Alex}(u(t_0,\cdot))$, Proposition \ref{pro:Alex2} yields
\begin{equation}\label{eq:backward alex}
	\bar p(t_0)=Q(t_0)\bar\gamma(t_0)+o(|\bar\gamma(t_0)|).
\end{equation}
By the Lipschitz dependence of the Hamiltonian flow and \eqref{eq:backward alex}, $\max\{|\bar\gamma(s)|,|\bar p(s)|\}\leqslant C|\bar\gamma(t_0)|$ for all $s\in[0,t_0]$ when $|\bar\gamma(t_0)|$ is sufficiently small. Thus, as in the proof of (1), the pair $(\bar\gamma,\bar p)$ satisfies the linearized system
\begin{equation}\label{eq:alex pf 13}
	\begin{pmatrix}
		\dot{\bar{\gamma}}(s)\\
		\dot{\bar{p}}(s)
	\end{pmatrix}
	=
	\begin{pmatrix}
		H_{px}(\gamma(s),p(s)) & H_{pp}(\gamma(s),p(s))\\
		-H_{xx}(\gamma(s),p(s)) & -H_{xp}(\gamma(s),p(s))
	\end{pmatrix}
	\cdot
	\begin{pmatrix}
		\bar{\gamma}(s)\\
		\bar{p}(s)
	\end{pmatrix}+o(|\bar\gamma(t_0)|),\quad s\in[0,t_0].
\end{equation}
Since all the curves under consideration lie in a compact region of the phase space $T^*\T^n$, the Taylor remainder of the Hamiltonian vector field is uniform, and \eqref{eq:backward sep} makes the remainders in \eqref{eq:backward alex} and \eqref{eq:alex pf 13} uniform as $y\to\gamma(0)$. Let $\Phi(0,t_0)$ be the inverse of the state-transition matrix $\Phi(t_0,0)$ of the linearized system \eqref{eq:alex pf 13} on $[0,t_0]$, and let $\Phi_k(0,t_0)$, $k=1,2,3,4$, be its $n\times n$ blocks. Solving backward and using \eqref{eq:backward alex}, we obtain
\begin{equation}\label{eq:backward transport}
	\begin{cases}
		\bar\gamma(0)=\bigl(\Phi_1(0,t_0)+\Phi_2(0,t_0)Q(t_0)\bigr)\bar\gamma(t_0)+o(|\bar\gamma(t_0)|),\\
		\bar p(0)=\bigl(\Phi_3(0,t_0)+\Phi_4(0,t_0)Q(t_0)\bigr)\bar\gamma(t_0)+o(|\bar\gamma(t_0)|).
	\end{cases}
\end{equation}

We claim that the matrix $N=\Phi_1(0,t_0)+\Phi_2(0,t_0)Q(t_0)$ is invertible. 
Suppose, to the contrary, that $N$ is degenerate, and choose a unit vector $\theta\in\R^n$ with $N\theta=0$. By Proposition \ref{pro:cali} (1), for any $\tau>0$ there exists a $(u,H)$-calibrated curve $\tilde\gamma_\tau:[0,t_0]\to\T^n$ with $\tilde\gamma_\tau(t_0)=\gamma(t_0)+\tau\theta$ in local coordinate. Applying Theorem \ref{distance estimate} directly to the calibrated curve $\tilde\gamma_\tau$ and to $\gamma$ with $t=t_0$ gives $d(\tilde\gamma_\tau(0),\gamma(0))\leqslant\frac{e^{C_1T}}{(T-t_0)^{C_2}}\tau$, so $\tilde\gamma_\tau(0)\in W$ for all $\tau$ small. By Lemma \ref{lem:uniq cali}, $\tilde\gamma_\tau$ coincides with the curve $\gamma_y$ for $y=\tilde\gamma_\tau(0)$, and \eqref{eq:backward transport} applies, giving $\tilde\gamma_\tau(0)-\gamma(0)=N(\tau\theta)+o(\tau)=o(\tau)$. On the other hand, \eqref{eq:backward sep} yields
\[
d(\tilde\gamma_\tau(0),\gamma(0))\geqslant\frac{(T-t_0)^{C_2}}{e^{C_1T}}\cdot d(\tilde\gamma_\tau(t_0),\gamma(t_0))=\frac{(T-t_0)^{C_2}}{e^{C_1T}}\tau,
\]
a contradiction. Hence $N$ is invertible.

Therefore, by \eqref{eq:backward transport}, \eqref{eq:backward sep} and the invertibility of $N$, setting
\[
Q(0)=\bigl(\Phi_3(0,t_0)+\Phi_4(0,t_0)Q(t_0)\bigr)N^{-1},
\]
we obtain
\[
\bar p(0)=Q(0)\bar\gamma(0)+o(|\bar\gamma(0)|),
\]
that is, since $\bar\gamma(0)=y-\gamma(0)$ and $\bar p(0)=D\phi(y)-D\phi(\gamma(0))$,
\[
D\phi(y)-D\phi(\gamma(0))=Q(0)(y-\gamma(0))+o(|y-\gamma(0)|),\qquad \forall y\in W.
\]
Since $\phi$ is differentiable on $W$, Proposition \ref{pro:Alex2} implies $\gamma(0)\in\operatorname{Alex}(u(0,\cdot))$. Thus, assertion (2) follows from (1).
\end{proof}

\begin{Rem}
	The main idea for the proof of Theorem \ref{the:propagate alex} is, for any $y$ near $\gamma(t)$, finding a $(u,H)$-calibrated curve $\tilde{\gamma}$ connecting $y$ at time $t$ to a point near $\gamma(t_0)$ at time $t_0$. In this way, we can obtain the information at time $t$ from the information at time $t_0$, by a method of differences. The existence of $\tilde{\gamma}$ is trivial for $t>t_0$ (Proposition \ref{pro:cali} (1)). However, for $t<t_0$, such $\tilde{\gamma}$ may not exist in general. This is the only difference between the two cases, and is why we make the $C^{1,1}$ support assumption in (2).
\end{Rem}

Theorem \ref{the:propagate alex} and Theorem \ref{thm:close_cut} immediately imply the following corollary.
\begin{Cor}
	Let $H$ be a Tonelli Hamiltonian on $\T^n$ and let $\phi$ be a viscosity solution of \eqref{eq:HJs}.
	Then the following statements are true:
	\begin{enumerate}
		\item If $\gamma: \mathbb{S}^1 \to \T^n$ is a periodic $(\phi,H)$-calibrated curve and $\gamma(t_0) \in \operatorname{Alex}(\phi)$ for some $t_0 \in \mathbb{S}^1$, then $\gamma(t) \in \operatorname{Alex}(\phi)$ for all $t \in \mathbb{S}^1$, and $Q(t)=D^2\phi(\gamma(t))$, $t \in \mathbb{S}^1$ is of class $C^1$ and satisfies the matrix Riccati equation \eqref{eq:Ricc}.
		\item  Suppose  $\cut(\phi)$ is closed. If $\gamma: (-\infty,0] \to \T^n$ is a $(\phi,H)$-calibrated curve and $\gamma(t_0) \in \operatorname{Alex}(\phi)$ for some $t_0\in(-\infty,0)$, then $\gamma(t) \in \operatorname{Alex}(\phi)$ for all $t \in (-\infty,0)$, and $Q(t)=D^2\phi(\gamma(t))$, $t \in (-\infty,0)$ is of class $C^1$ and satisfies the matrix Riccati equation \eqref{eq:Ricc}.
	\end{enumerate}
\end{Cor}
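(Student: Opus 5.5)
The corollary follows by reducing each item to Theorem \ref{the:propagate alex}, applied to the evolutionary solution generated by a weak KAM solution. Since $\phi$ solves \eqref{eq:HJs}, we have $T^-_t\phi=\phi$ for all $t\geqslant0$. Hence $u(t,x)=\phi(x)$ is the viscosity solution of \eqref{eq:HJephi pre} with initial datum $\phi$, and any $(\phi,H)$-calibrated curve $\gamma:[a,b]\to\T^n$ gives, after the time shift $s\mapsto\gamma(a+s)$, a $(u,H)$-calibrated curve on $[0,b-a]$. This holds because the calibration identity only involves differences $\phi(\gamma(t_2))-\phi(\gamma(t_1))$ and $H$ is autonomous. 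Under this identification, $\operatorname{Alex}(u(t,\cdot))=\operatorname{Alex}(\phi)$ for every $t$, and $D^2_xu(t,\gamma(t))=D^2\phi(\gamma(t))$.

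For (1), let $\gamma$ be periodic with period $\ell$, viewed as an $\ell$-periodic curve on $\R$ that is calibrated on every compact interval. Given $t_0$ with $\gamma(t_0)\in\operatorname{Alex}(\phi)$, apply Theorem \ref{the:propagate alex} (1) to the shifted curve $s\mapsto\gamma(t_0+s)$ on $[0,T]$ with $T=2\ell$. This shows that $\gamma(t)\in\operatorname{Alex}(\phi)$ for all $t\in[t_0,t_0+2\ell)$, which covers a full period and hence all of $\mathbb{S}^1$ by periodicity. On that interval, $Q$ is $C^1$ and satisfies \eqref{eq:Ricc}. To get $C^1$ regularity and the Riccati equation at every point of $\mathbb{S}^1$, including the wrap-around point, repeat the argument starting from any point $t_1$, which is now an Alexandrov point, on $[t_1,t_1+2\ell)$. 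Every $t$ then lies in the interior of such an interval.

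For (2), assume $\cut(\phi)$ is closed. Theorem \ref{thm:close_cut}, (a)$\Rightarrow$(c), gives $\gamma(t)\in\operatorname{supp}_{C^{1,1}}(\phi)$ for all $t<0$, so $\gamma(t_0)\in\operatorname{Alex}(\phi)\cap\operatorname{supp}_{C^{1,1}}(\phi)$. Now fix any $a<t_0$ and apply Theorem \ref{the:propagate alex} (2) to the shifted curve $s\mapsto\gamma(a+s)$ on $[0,T]$ with $T=-a$, placing the Alexandrov point at time $t_0-a\in(0,T)$. This yields Alexandrov points, $C^1$ regularity of $Q$, and \eqref{eq:Ricc} on $[a,0)$. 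Since $a$ is arbitrary, the conclusion holds on $(-\infty,0)$.

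The only real point to check is that the time shift and the constant-in-time solution $u=\phi$ fit the hypotheses of Theorem \ref{the:propagate alex}. In particular, we must verify that $\operatorname{supp}_{C^{1,1}}(u(t_0-a,\cdot))=\operatorname{supp}_{C^{1,1}}(\phi)$ and that the calibrated curve is defined on a closed interval extending strictly beyond the times considered. Both follow directly from $T^-_t\phi=\phi$.
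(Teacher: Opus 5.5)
Your proposal is correct and follows the paper's route. The paper only says the corollary follows immediately from Theorem~\ref{the:propagate alex} and Theorem~\ref{thm:close_cut}, and you fill in the details: the identification $u(t,\cdot)=T^-_t\phi=\phi$ with time shifts, forward propagation over a window of length $2\ell$ for (1), and (a)$\Rightarrow$(c) of Theorem~\ref{thm:close_cut} supplying the $C^{1,1}$-support hypothesis of Theorem~\ref{the:propagate alex}~(2) for (2). The restart at an arbitrary $t_1$ in (1) is harmless but unnecessary, since $(t_0,t_0+2\ell)$ already contains an interior representative of every point of $\mathbb{S}^1$.
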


The following theorem is a direct consequence of Proposition \ref{pro:Alex2} and Theorem \ref{thm:1} (1).
\begin{The}\label{thm:up of alex}
	Let $H$ be a Tonelli Hamiltonian on $\T^n$ and $\phi\in\SCL(\T^n)$. Then there exist $C_1>0$ and $C_2>0$ (depending on the local $C^2$ modulus and the Tonelli modulus of $H$ and the Lipschitz and semiconcavity constants of $\phi$) such that for any $x\in\operatorname{Alex}(\phi)$
	\begin{align*}
		\|D^2\phi(x)\|\leqslant\max\Big\{\ \frac{C_1}{\tau_{\phi,H}(x)},C_2\ \Big\}.
	\end{align*}
\end{The}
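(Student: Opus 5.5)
The plan is to compare the Hessian at an Alexandrov point with the degree of regularity $R_\phi(x)$, and then invoke Theorem~\ref{thm:1}~(1). Take the constants to be $C_1=C_{1,1}$ and $C_2=C_{1,2}$ from Theorem~\ref{thm:1}. Hence it suffices to prove
\[
\|D^2\phi(x)\|\leqslant R_\phi(x),\qquad x\in\operatorname{Alex}(\phi),
\]
where $\|\cdot\|$ is the operator norm. By Proposition~\ref{pro:singcutalex} we have $x\in\reg(\phi)$. Lemma~\ref{pro:reform_reg} and Definition~\ref{def:R_phi} then give $R_\phi(x)<+\infty$ together with
\[
|p-D\phi(x)|\leqslant\lambda\, d(y,x)\quad\text{for all }y\in\T^n,\ p\in D^+\phi(y)
\]
for every $\lambda>R_\phi(x)$. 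Since the admissible set of $\lambda$ is closed, the infimum itself is admissible, and we may take $\lambda=R_\phi(x)$ directly.

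Next I apply Proposition~\ref{pro:Alex2} with $A=D^2\phi(x)$. Working in local flat coordinates near $x$, so that $d(y,x)=|y-x|$ for $y$ close to $x$, it gives
\[
\sup_{p\in D^+\phi(y)}\frac{|p-D\phi(x)-D^2\phi(x)(y-x)|}{|y-x|}\longrightarrow0\qquad (y\to x).
\]
Fix a unit vector $v$ and put $y=x+sv$ with $s\downarrow0$. For any $p\in D^+\phi(y)$, the triangle inequality yields
\[
s\,|D^2\phi(x)v|\leqslant |p-D\phi(x)|+o(s)\leqslant R_\phi(x)\,s+o(s).
\]
Dividing by $s$ and letting $s\to0$ gives $|D^2\phi(x)v|\leqslant R_\phi(x)$. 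Taking the supremum over unit vectors $v$ then gives $\|D^2\phi(x)\|\leqslant R_\phi(x)$.

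Combining this with Theorem~\ref{thm:1}~(1) gives
\[
\|D^2\phi(x)\|\leqslant R_\phi(x)\leqslant\max\{C_{1,1}/\tau_{\phi,H}(x),\,C_{1,2}\}.
\]
This is the claim. The constants inherit their dependence on $H$ and on $\phi$ from Theorem~\ref{thm:1}. The case $\tau_{\phi,H}(x)=+\infty$ is covered by the convention $1/(+\infty)=0$. The case $\tau_{\phi,H}(x)=0$ cannot occur here, because $x\in\reg(\phi)=\reg(\phi,H)$ by Theorem~\ref{thm:Reg_Cut}. Moreover, that case would be vacuous anyway.

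No step is a real obstacle. The only point requiring care is the passage from the global torus distance $d$ to the Euclidean increment in Proposition~\ref{pro:Alex2}. This is harmless, since only $y$ near $x$ is used and $d(y,x)=|y-x|$ there.
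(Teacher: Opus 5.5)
Your proposal is correct and follows the paper's own route: the paper states the result as a direct consequence of Proposition~\ref{pro:Alex2} and Theorem~\ref{thm:1}~(1). You use Proposition~\ref{pro:Alex2} to obtain $\|D^2\phi(x)\|\leqslant R_\phi(x)$ at Alexandrov points and then apply the bound on $R_\phi$ from Theorem~\ref{thm:1}~(1), with the same constants.
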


\begin{Rem}
	Theorem \ref{thm:up of alex} gives an upper bound of the second-order differential of a semiconcave function. For Theorem \ref{the:propagate alex}, Proposition \ref{pro:uni lip semi} implies $u(t,\cdot)$ is uniformly Lipschitz and uniformly semiconcave with linear modulus for all $t\geqslant0$. Therefore, if $D_x^2 u(t,\gamma(t))$ blows up as $t\to t_0$, the rate is no more than $\frac{1}{t_0-t}$, which is exactly the blow up rate of the solution of Riccati equation.
\end{Rem}

\bibliographystyle{plain}
\bibliography{mybib}

@article{Azagra2023,
	author = {Azagra, Daniel and Cappello, Anthony and Hajłasz, Piotr},
	doi = {10.1090/bproc/190},
	fjournal = {Proceedings of the American Mathematical Society, Series B},
	issn = {2330-1511},
	journal = {Proc. Amer. Math. Soc. Ser. B},
	pages = {382--397},
	title = {A geometric approach to second-order differentiability of convex functions},
	volume = {10},
	year = {2023},
	}

@article{Cannarsa_Cheng_Hong2025,
	author = {Cannarsa, Piermarco and Cheng, Wei and Hong, Jiahui},
	doi = {10.1016/j.nonrwa.2024.104282},
	fjournal = {Nonlinear Analysis. Real World Applications. An International Multidisciplinary Journal},
	issn = {1468-1218},
	journal = {Nonlinear Anal. Real World Appl.},
	mrclass = {35F21 (49L25)},
	mrnumber = {4836625},
	pages = {Paper No. 104282},
	title = {Topological and control theoretic properties of {H}amilton--{J}acobi equations via {L}ax-{O}leinik commutators},
	doi = {10.1016/j.nonrwa.2024.104282},
	url = {https://doi.org/10.1016/j.nonrwa.2024.104282},
	volume = {84},
	year = {2025}}

@article{Figalli_Rifford_Villani2011,
	author = {Figalli, A. and Rifford, L. and Villani, C.},
	doi = {10.1016/j.difgeo.2011.02.002},
	fjournal = {Differential Geometry and its Applications},
	issn = {0926-2245},
	journal = {Differential Geom. Appl.},
	mrclass = {53C20 (53C22)},
	mrnumber = {2784296},
	mrreviewer = {Luca Granieri},
	number = {2},
	pages = {154--159},
	title = {Tangent cut loci on surfaces},
	url = {https://mathscinet.ams.org/mathscinet-getitem?mr=2784296},
	volume = {29},
	year = {2011}}

@unpublished{CCHW2024,
	author = {Cannarsa, Piermarco and Cheng, Wei and Hong, Jiahui and Wang, Kaizhi},
	note = {preprint, arXiv:2409.00961},
	title = {Variational construction of singular characteristics and propagation of singularities},
	year = {2024}}

@article{Arnaud2011,
	author = {Arnaud, M.-C.},
	doi = {10.1088/0951-7715/24/1/003},
	fjournal = {Nonlinearity},
	issn = {0951-7715},
	journal = {Nonlinearity},
	mrclass = {37J50 (35F21 37J40 70H20)},
	mrnumber = {2739831},
	mrreviewer = {Karl Friedrich Siburg},
	number = {1},
	pages = {71--78},
	title = {Pseudographs and the {L}ax-{O}leinik semi-group: a geometric and dynamical interpretation},
	url = {https://mathscinet.ams.org/mathscinet-getitem?mr=2739831},
	volume = {24},
	year = {2011}}

@article{Cannarsa_Cheng_Fathi2021,
	author = {Cannarsa, Piermarco and Cheng, Wei and Fathi, Albert},
	doi = {10.1007/s10240-021-00125-5},
	fjournal = {Publications Math\'{e}matiques. Institut de Hautes \'{E}tudes Scientifiques},
	issn = {0073-8301},
	journal = {Publ. Math. Inst. Hautes \'{E}tudes Sci.},
	mrclass = {Prelim},
	mrnumber = {4292741},
	number = {1},
	pages = {327--366},
	title = {Singularities of solutions of time dependent {H}amilton-{J}acobi equations. {A}pplications to {R}iemannian geometry},
	url = {https://mathscinet.ams.org/mathscinet-getitem?mr=4292741},
	volume = {133},
	year = {2021}}

@article{Cannarsa_Cheng2021a,
	author = {Cannarsa, Piermarco and Cheng, Wei},
	doi = {10.1007/s00032-021-00330-1},
	fjournal = {Milan Journal of Mathematics},
	issn = {1424-9286},
	journal = {Milan J. Math.},
	mrclass = {35F21 (37J51 49L25)},
	mrnumber = {4277365},
	number = {1},
	pages = {187--215},
	title = {Singularities of {S}olutions of {H}amilton--{J}acobi {E}quations},
	url = {https://mathscinet.ams.org/mathscinet-getitem?mr=4277365},
	volume = {89},
	year = {2021}}

@article{Bogaevsky2002,
	author = {Bogaevsky, Ilya Aleksandrovich},
	doi = {10.1016/S0167-2789(02)00652-8},
	fjournal = "{Physica D. Nonlinear Phenomena}",
	issn = {0167-2789},
	journal = {Phys. D},
	mrclass = {58K50 (35Q35 35Q53 76B99)},
	mrnumber = {1945478},
	mrreviewer = {Vladimir V. Tchernov},
	number = {1-2},
	pages = {1--28},
	title = {Perestroikas of shocks and singularities of minimum functions},
	url = {https://mathscinet.ams.org/mathscinet-getitem?mr=1945478},
	volume = {173},
	year = {2002}}

@article{Colombo_Marigonda2006,
	author = {Colombo, Giovanni and Marigonda, Antonio},
	doi = {10.1007/s00526-005-0352-7},
	fjournal = "{Calculus of Variations and Partial Differential Equations}",
	issn = {0944-2669},
	journal = {Calc. Var. Partial Differential Equations},
	mrclass = {49J50 (26B25 35A15 49J52 58C20)},
	mrnumber = {2183853},
	mrreviewer = {Alberto Zaffaroni},
	number = {1},
	pages = {1--31},
	title = {Differentiability properties for a class of non-convex functions},
	url = {https://mathscinet.ams.org/mathscinet-getitem?mr=2183853},
	volume = {25},
	year = {2006}}

@article{Ancona_Cannarsa_Nguyen2016_1,
	author = {Ancona, Fabio and Cannarsa, Piermarco and Nguyen, Khai T.},
	doi = {10.1007/s00205-015-0907-5},
	fjournal = "{Archive for Rational Mechanics and Analysis}",
	issn = {0003-9527},
	journal = {Arch. Ration. Mech. Anal.},
	mrclass = {35F21 (35F25 49L25)},
	mrnumber = {3437863},
	mrreviewer = {Piotr Rybka},
	number = {2},
	pages = {793--828},
	title = {Quantitative compactness estimates for {H}amilton-{J}acobi equations},
	url = {https://mathscinet.ams.org/mathscinet-getitem?mr=3437863},
	volume = {219},
	year = {2016}}

@article{Ancona_Cannarsa_Nguyen2016_2,
	author = {Ancona, Fabio and Cannarsa, Piermarco and Nguyen, Khai T.},
	fjournal = "{Bulletin of the Institute of Mathematics. Academia Sinica. New Series}",
	issn = {2304-7909},
	journal = {Bull. Inst. Math. Acad. Sin. (N.S.)},
	mrclass = {35F21 (35D40 35F25 47B06 49L20)},
	mrnumber = {3497746},
	mrreviewer = {Qihuai Liu},
	number = {1},
	pages = {63--113},
	title = {Compactness estimates for {H}amilton-{J}acobi equations depending on space},
	url = {https://mathscinet.ams.org/mathscinet-getitem?mr=3497746},
	volume = {11},
	year = {2016}}

@book{Krylov_book1987,
	author = {Krylov, N. V.},
	doi = {10.1007/978-94-010-9557-0},
	isbn = {90-277-2289-7},
	mrclass = {35-02 (35J60 35K55)},
	mrnumber = {901759},
	note = {Translated from the Russian by P. L. Buzytsky [P. L. Buzytski\u{\i}]},
	pages = {xiv+462},
	publisher = {D. Reidel Publishing Co., Dordrecht},
	series = {Mathematics and its Applications (Soviet Series)},
	title = {Nonlinear elliptic and parabolic equations of the second order},
	url = {https://mathscinet.ams.org/mathscinet-getitem?mr=901759},
	volume = {7},
	year = {1987}}

@article{Douglis1961,
	author = {Douglis, Avron},
	doi = {10.1002/cpa.3160140307},
	fjournal = "{Communications on Pure and Applied Mathematics}",
	issn = {0010-3640},
	journal = {Comm. Pure Appl. Math.},
	mrclass = {35.20},
	mrnumber = {139848},
	mrreviewer = {O. A. Ole\u{\i}nik},
	pages = {267--284},
	title = {The continuous dependence of generalized solutions of non-linear partial differential equations upon initial data},
	url = {https://mathscinet.ams.org/mathscinet-getitem?mr=139848},
	volume = {14},
	year = {1961}}

@incollection{Petrunin2007,
	author = {Petrunin, Anton},
	booktitle = {Surveys in differential geometry. {V}ol. {XI}},
	doi = {10.4310/SDG.2006.v11.n1.a6},
	mrclass = {53C21 (53C20 53C45 58E05)},
	mrnumber = {2408266},
	mrreviewer = {Jeremy Wong},
	pages = {137--201},
	publisher = {Int. Press, Somerville, MA},
	series = {Surv. Differ. Geom.},
	title = {Semiconcave functions in {A}lexandrov's geometry},
	url = {https://mathscinet.ams.org/mathscinet-getitem?mr=2408266},
	volume = {11},
	year = {2007}}

@incollection{Rockafellar1982,
	author = {Rockafellar, R. Tyrrell},
	booktitle = {Progress in nondifferentiable optimization},
	mrclass = {90C48 (49A52 58C20 90C30)},
	mrnumber = {704977},
	pages = {125--143},
	publisher = {Internat. Inst. Appl. Systems Anal., Laxenburg},
	series = {IIASA Collaborative Proc. Ser. CP-82},
	title = {Favorable classes of {L}ipschitz-continuous functions in subgradient optimization},
	url = {https://mathscinet.ams.org/mathscinet-getitem?mr=704977},
	volume = {8},
	year = {1982}}

@article{Rifford2000,
	author = {Rifford, Ludovic},
	doi = {10.1137/S0363012999356039},
	fjournal = "{SIAM Journal on Control and Optimization}",
	issn = {0363-0129},
	journal = {SIAM J. Control Optim.},
	mrclass = {93D05 (34D20 49J52 49L25 93D20)},
	mrnumber = {1814266},
	mrreviewer = {Zvi Artstein},
	number = {4},
	pages = {1043--1064},
	title = {Existence of {L}ipschitz and semiconcave control-{L}yapunov functions},
	url = {https://mathscinet.ams.org/mathscinet-getitem?mr=1814266},
	volume = {39},
	year = {2000}}

@article{Rifford2002,
	author = {Rifford, Ludovic},
	doi = {10.1137/S0363012900375342},
	fjournal = "{SIAM Journal on Control and Optimization}",
	issn = {0363-0129},
	journal = {SIAM J. Control Optim.},
	mrclass = {93D30 (34D20 34H05 49L25 93C15 93D15)},
	mrnumber = {1939865},
	mrreviewer = {Zvi Artstein},
	number = {3},
	pages = {659--681},
	title = {Semiconcave control-{L}yapunov functions and stabilizing feedbacks},
	url = {https://mathscinet.ams.org/mathscinet-getitem?mr=1939865},
	volume = {41},
	year = {2002}}

@article{Fleming_McEneaney2000,
	author = {Fleming, Wendell H. and McEneaney, William M.},
	doi = {10.1137/S0363012998332433},
	fjournal = "{SIAM Journal on Control and Optimization}",
	issn = {0363-0129},
	journal = {SIAM J. Control Optim.},
	mrclass = {93E11 (49L25)},
	mrnumber = {1741434},
	mrreviewer = {Jose Luis Menaldi},
	number = {3},
	pages = {683--710},
	title = {A max-plus-based algorithm for a {H}amilton-{J}acobi-{B}ellman equation of nonlinear filtering},
	url = {https://mathscinet.ams.org/mathscinet-getitem?mr=1741434},
	volume = {38},
	year = {2000}}

@article{Cannarsa_Frankowska1991,
	author = {Cannarsa, Piermarco and Frankowska, Halina},
	doi = {10.1137/0329068},
	fjournal = "{SIAM Journal on Control and Optimization}",
	issn = {0363-0129},
	journal = {SIAM J. Control Optim.},
	mrclass = {49K40 (49L20 49N35)},
	mrnumber = {1132185},
	mrreviewer = {Vera Zeidan},
	number = {6},
	pages = {1322--1347},
	title = {Some characterizations of optimal trajectories in control theory},
	url = {https://mathscinet.ams.org/mathscinet-getitem?mr=1132185},
	volume = {29},
	year = {1991}}

@article{Hrustalev1978,
	author = {Hrustal\"{e}v, M. M.},
	fjournal = "{Doklady Akademii Nauk SSSR}",
	issn = {0002-3264},
	journal = {Dokl. Akad. Nauk SSSR},
	mrclass = {49B10},
	mrnumber = {510254},
	mrreviewer = {I. V. Romanovski\u{\i}},
	number = {5},
	pages = {1023--1026},
	title = {Necessary and sufficient conditions for optimality in the form of the {B}ellman equation},
	url = {https://mathscinet.ams.org/mathscinet-getitem?mr=510254},
	volume = {242},
	year = {1978}}

@book{Berger_book2003,
	author = {Berger, Marcel},
	doi = {10.1007/978-3-642-18245-7},
	isbn = {3-540-65317-1},
	mrclass = {53-02 (53C20 53C21 53C55 58J20)},
	mrnumber = {2002701},
	mrreviewer = {J\"{u}rgen Eichhorn},
	pages = {xxiv+824},
	publisher = {Springer-Verlag, Berlin},
	title = {A panoramic view of {R}iemannian geometry},
	url = {https://mathscinet.ams.org/mathscinet-getitem?mr=2002701},
	year = {2003}}

@book{Ambrosio_GigliNicola_Savare_book2008,
	author = {Ambrosio, Luigi and Gigli, Nicola and Savar\'{e}, Giuseppe},
	edition = {Second},
	isbn = {978-3-7643-8721-1},
	mrclass = {49-02 (28A33 35K55 35K90 49Q20 60B05)},
	mrnumber = {2401600},
	mrreviewer = {Pietro Celada},
	pages = {x+334},
	publisher = {Birkh\"{a}user Verlag, Basel},
	series = {Lectures in Mathematics ETH Z\"{u}rich},
	title = {Gradient flows in metric spaces and in the space of probability measures},
	url = {https://mathscinet.ams.org/mathscinet-getitem?mr=2401600},
	year = {2008}}

@article{Rifford2003,
	author = {Rifford, Ludovic},
	doi = {10.1512/iumj.2003.52.2287},
	fjournal = {Indiana University Mathematics Journal},
	issn = {0022-2518},
	journal = {Indiana Univ. Math. J.},
	mrclass = {49L25 (93D15)},
	mrnumber = {2010731},
	mrreviewer = {Guy Jumarie},
	number = {5},
	pages = {1373--1395},
	title = {Singularities of some viscosity supersolutions and the stabilization problem in the plane},
	url = {https://mathscinet.ams.org/mathscinet-getitem?mr=2010731},
	volume = {52},
	year = {2003}}

@article{Itoh_Tanaka2001,
	author = {Itoh, Jin-ichi and Tanaka, Minoru},
	doi = {10.1090/S0002-9947-00-02564-2},
	fjournal = {Transactions of the American Mathematical Society},
	issn = {0002-9947},
	journal = {Trans. Amer. Math. Soc.},
	mrclass = {53C20 (26A16 28A78 53C22)},
	mrnumber = {1695025},
	mrreviewer = {James J. Hebda},
	number = {1},
	pages = {21--40},
	title = {The {L}ipschitz continuity of the distance function to the cut locus},
	url = {https://mathscinet.ams.org/mathscinet-getitem?mr=1695025},
	volume = {353},
	year = {2001}}

@book{Villani_book2009,
	author = {Villani, C\'{e}dric},
	doi = {10.1007/978-3-540-71050-9},
	isbn = {978-3-540-71049-3},
	mrclass = {49-02 (28A75 37J50 49Q20 53C23 58E30)},
	mrnumber = {2459454},
	mrreviewer = {Dario Cordero-Erausquin},
	pages = {xxii+973},
	publisher = {Springer-Verlag, Berlin},
	series = {Grundlehren der Mathematischen Wissenschaften},
	title = {Optimal transport: old and new},
	url = {https://mathscinet.ams.org/mathscinet-getitem?mr=2459454},
	volume = {338},
	year = {2009}}

@book{Fleming_Soner_book2006,
	author = {Fleming, Wendell H. and Soner, Halil Mete},
	edition = {Second},
	isbn = {978-0387-260457; 0-387-26045-5},
	mrclass = {93-02 (49L20 49L25 60J60 90C39 91A23 91B28 93E20)},
	mrnumber = {2179357},
	pages = {xviii+429},
	publisher = {Springer, New York},
	series = {Stochastic Modelling and Applied Probability},
	title = {Controlled {M}arkov processes and viscosity solutions},
	url = {https://mathscinet.ams.org/mathscinet-getitem?mr=2179357},
	volume = {25},
	year = {2006}}

@article{Li_Nirenberg2005,
	author = {Li, Y. and Nirenberg, L.},
	fjournal = {Communications on Pure and Applied Mathematics},
	issn = {0010-3640},
	journal = {Comm. Pure Appl. Math.},
	mrclass = {35F20 (53C60)},
	mrnumber = {2094267},
	mrreviewer = {Jie Yang},
	number = {1},
	pages = {85--146},
	title = {The distance function to the boundary, {F}insler geometry, and the singular set of viscosity solutions of some {H}amilton-{J}acobi equations},
	url = {https://doi.org/10.1002/cpa.20051},
	volume = {58},
	year = {2005}}

@article{ACNS2013,
	author = {Albano, Paolo and Cannarsa, Piermarco and Nguyen, Khai Tien and Sinestrari, Carlo},
	fjournal = {Mathematische Annalen},
	issn = {0025-5831},
	journal = {Math. Ann.},
	mrclass = {35D40 (26B25 35A20 35A21 49J52)},
	mrnumber = {3038120},
	mrreviewer = {Luigi Rodino},
	number = {1},
	pages = {23--43},
	title = {Singular gradient flow of the distance function and homotopy equivalence},
	url = {https://doi.org/10.1007/s00208-012-0835-8},
	volume = {356},
	year = {2013}}

@article{Albano2014_1,
	author = {Albano, Paolo},
	fjournal = "{Journal of Mathematical Analysis and Applications}",
	issn = {0022-247X},
	journal = {J. Math. Anal. Appl.},
	mrclass = {35F21 (35D40 35F25)},
	mrnumber = {3128423},
	mrreviewer = {Gawtum Namah},
	number = {2},
	pages = {684--687},
	title = {Propagation of singularities for solutions of {H}amilton-{J}acobi equations},
	doi = {10.1016/j.jmaa.2013.10.015},
	url = {https://doi.org/10.1016/j.jmaa.2013.10.015},
	volume = {411},
	year = {2014}}

@article{Albano_Cannarsa2002,
	author = {Albano, Paolo and Cannarsa, Piermarco},
	fjournal = "{Archive for Rational Mechanics and Analysis}",
	issn = {0003-9527},
	journal = {Arch. Ration. Mech. Anal.},
	mrclass = {35A20 (35F20)},
	mrnumber = {1892229},
	mrreviewer = {Luigi Rodino},
	number = {1},
	pages = {1--23},
	title = {Propagation of singularities for solutions of nonlinear first order partial differential equations},
	doi = {10.1007/s002050100176},
	url = {https://doi.org/10.1007/s002050100176},
	volume = {162},
	year = {2002}}

@book{Bardi_Capuzzo-Dolcetta1997,
	author = {Bardi, Martino and Capuzzo-Dolcetta, Italo},
	isbn = {0-8176-3640-4},
	mrclass = {49-02 (49J15 49K15 49L25)},
	mrnumber = {1484411},
	mrreviewer = {Vladimir Veliov},
	note = {With appendices by Maurizio Falcone and Pierpaolo Soravia},
	pages = {xviii+570},
	publisher = {Birkh{\"a}user Boston, Inc., Boston, MA},
	series = {Systems \& Control: Foundations \& Applications},
	title = {Optimal control and viscosity solutions of {H}amilton-{J}acobi-{B}ellman equations},
	doi = {10.1007/978-0-8176-4755-1},
	url = {https://doi.org/10.1007/978-0-8176-4755-1},
	year = {1997}}

@article{Bernard2007,
	author = {Bernard, Patrick},
	fjournal = {Annales Scientifiques de l'{\'E}cole Normale Sup{\'e}rieure. Quatri{\`e}me S{\'e}rie},
	issn = {0012-9593},
	journal = {Ann. Sci. {\'E}cole Norm. Sup. (4)},
	mrclass = {37J50 (35F20 49L25 70H20)},
	mrnumber = {2493387},
	mrreviewer = {Daniel Massart},
	number = {3},
	pages = {445--452},
	title = {Existence of {$C^{1,1}$} critical sub-solutions of the {H}amilton-{J}acobi equation on compact manifolds},
	url = {https://doi.org/10.1016/j.ansens.2007.01.004},
	volume = {40},
	year = {2007}}

@article{Bernard2012,
	author = {Bernard, Patrick},
	fjournal = {Proceedings of the Royal Society of Edinburgh. Section A. Mathematics},
	issn = {0308-2105},
	journal = {Proc. Roy. Soc. Edinburgh Sect. A},
	mrclass = {70H08 (37J40 70H20)},
	mrnumber = {3002592},
	mrreviewer = {Jos{\'e} Claudio Vidal Diaz},
	number = {6},
	pages = {1131--1177},
	title = {The {L}ax-{O}leinik semi-group: a {H}amiltonian point of view},
	doi = {10.1017/S0308210511000059},
	url = {https://doi.org/10.1017/S0308210511000059},
	volume = {142},
	year = {2012}}

@article{Cannarsa_Cheng3,
	author = {Cannarsa, Piermarco and Cheng, Wei},
	fjournal = "{Calculus of Variations and Partial Differential Equations}",
	issn = {0944-2669},
	journal = {Calc. Var. Partial Differential Equations},
	mrclass = {35F21 (37J50 49L25)},
	mrnumber = {3687884},
	number = {5},
	pages = {Art. 125, 31},
	title = {Generalized characteristics and {L}ax-{O}leinik operators: global theory},
	doi = {10.1007/s00526-017-1219-4},
	url = {https://doi.org/10.1007/s00526-017-1219-4},
	volume = {56},
	year = {2017}}

@article{Cannarsa_Mazzola_Sinestrari2015,
	author = {Cannarsa, Piermarco and Mazzola, Marco and Sinestrari, Carlo},
	fjournal = {Discrete and Continuous Dynamical Systems. Series A},
	issn = {1078-0947},
	journal = {Discrete Contin. Dyn. Syst.},
	mrclass = {35F21 (35A21 35F25 49J52)},
	mrnumber = {3392624},
	number = {9},
	pages = {4225--4239},
	title = {Global propagation of singularities for time dependent {H}amilton-{J}acobi equations},
	url = {https://doi.org/10.3934/dcds.2015.35.4225},
	volume = {35},
	year = {2015}}

@book{Cannarsa_Sinestrari_book,
	author = {Cannarsa, Piermarco and Sinestrari, Carlo},
	isbn = {0-8176-4084-3},
	mrclass = {49-02 (35F20 49K20 49L20)},
	mrnumber = {2041617},
	mrreviewer = {Pierre Cardaliaguet},
	pages = {xiv+304},
	publisher = {Birkh{\"a}user Boston, Inc., Boston, MA},
	series = {Progress in Nonlinear Differential Equations and their Applications},
	title = {Semiconcave functions, {H}amilton-{J}acobi equations, and optimal control},
	volume = {58},
	year = {2004}}

@article{Cannarsa_Yu2009,
	author = {Cannarsa, Piermarco and Yu, Yifeng},
	fjournal = {Journal of the European Mathematical Society (JEMS)},
	issn = {1435-9855},
	journal = {J. Eur. Math. Soc. (JEMS)},
	mrclass = {49K20 (26B25 35F21 49L20 49L25)},
	mrnumber = {2538498},
	mrreviewer = {Pietro Celada},
	number = {5},
	pages = {999--1024},
	title = {Singular dynamics for semiconcave functions},
	url = {https://doi.org/10.4171/JEMS/173},
	volume = {11},
	year = {2009}}

@article{Crandall_Evans_Lions1984,
	author = {Crandall, Michael G. and Evans, Lawrence C. and Lions, Pierre-Louis},
	fjournal = "{Transactions of the American Mathematical Society}",
	issn = {0002-9947},
	journal = {Trans. Amer. Math. Soc.},
	mrclass = {35F20 (35L60)},
	mrnumber = {732102},
	number = {2},
	pages = {487--502},
	title = {Some properties of viscosity solutions of {H}amilton-{J}acobi equations},
	doi = {10.2307/1999247},
	url = {https://doi.org/10.2307/1999247},
	volume = {282},
	year = {1984}}

@article{Crandall_Lions1983,
	author = {Crandall, Michael G. and Lions, Pierre-Louis},
	fjournal = "{Transactions of the American Mathematical Society}",
	issn = {0002-9947},
	journal = {Trans. Amer. Math. Soc.},
	mrclass = {35F20},
	mrnumber = {690039},
	mrreviewer = {Moshe Marcus},
	number = {1},
	pages = {1--42},
	title = {Viscosity solutions of {H}amilton-{J}acobi equations},
	doi = {10.2307/1999343},
	url = {https://doi.org/10.2307/1999343},
	volume = {277},
	year = {1983}}

@article{Dafermos1977,
	author = {Dafermos, Constantine M.},
	fjournal = "{Indiana University Mathematics Journal}",
	issn = {0022-2518},
	journal = {Indiana Univ. Math. J.},
	mrclass = {35L65},
	mrnumber = {0457947},
	mrreviewer = {J. Smoller},
	number = {6},
	pages = {1097--1119},
	title = {Generalized characteristics and the structure of solutions of hyperbolic conservation laws},
	doi = {10.1512/iumj.1977.26.26088},
	url = {https://doi.org/10.1512/iumj.1977.26.26088},
	volume = {26},
	year = {1977}}

@article{Fathi_Siconolfi2005,
	author = {Fathi, Albert and Siconolfi, Antonio},
	fjournal = "{Calculus of Variations and Partial Differential Equations}",
	issn = {0944-2669},
	journal = {Calc. Var. Partial Differential Equations},
	mrclass = {35F20 (37J50 49K10)},
	mrnumber = {2106767},
	mrreviewer = {Andrzej Swiech},
	number = {2},
	pages = {185--228},
	title = {P{DE} aspects of {A}ubry-{M}ather theory for quasiconvex {H}amiltonians},
	doi = {10.1007/s00526-004-0271-z},
	url = {https://doi.org/10.1007/s00526-004-0271-z},
	volume = {22},
	year = {2005}}

@article{Khanin_Sobolevski2016,
	author = {Khanin, Konstantin and Sobolevski, Andrei},
	fjournal = "{Archive for Rational Mechanics and Analysis}",
	issn = {0003-9527},
	journal = {Arch. Ration. Mech. Anal.},
	mrclass = {35F21 (49L25 70H20 76B03)},
	mrnumber = {3437865},
	mrreviewer = {Qihuai Liu},
	number = {2},
	pages = {861--885},
	title = {On dynamics of {L}agrangian trajectories for {H}amilton-{J}acobi equations},
	doi = {10.1007/s00205-015-0910-x},
	url = {https://doi.org/10.1007/s00205-015-0910-x},
	volume = {219},
	year = {2016}}

@article{Kruzkov1975,
	author = {Kru\v{z}kov, S. N.},
	journal = {Mat. Sb. (N.S.)},
	mrclass = {35L60},
	mrnumber = {0404870},
	mrreviewer = {V. Ju. Ljapidevskii},
	number = {3(11)},
	pages = {450--493, 496},
	title = {Generalized solutions of {H}amilton-{J}acobi equations of eikonal type. {I}. {S}tatement of the problems; existence, uniqueness and stability theorems; certain properties of the solutions},
	volume = {98(140)},
	year = {1975}}

@article{Bogaevski2006,
	author = {Bogaevsky, Ilya Aleksandrovich},
	doi = {10.4213/sm1502},
	fjournal = {Rossi\u\i skaya Akademiya Nauk. Matematicheski\u\i \ Sbornik},
	issn = {0368-8666},
	journal = {Mat. Sb.},
	mrclass = {34A12 (28B20 34A60 47H05 49J52 49K15)},
	mrnumber = {2437079},
	mrreviewer = {Dariusz Zagrodny},
	number = {12},
	pages = {11--42},
	title = {Discontinuous gradient differential equations, and trajectories in the calculus of variations},
	doi = {10.4213/sm1502},
	url = {https://doi.org/10.1070/SM2006v197n12ABEH003820},
	volume = {197},
	year = {2006}}

@article{Rifford2008,
	author = {Rifford, Ludovic},
	fjournal = {Communications in Partial Differential Equations},
	issn = {0360-5302},
	journal = {Comm. Partial Differential Equations},
	mrclass = {49L25 (35B65 35F20)},
	mrnumber = {2398240},
	number = {1-3},
	pages = {517--559},
	title = {On viscosity solutions of certain {H}amilton-{J}acobi equations: regularity results and generalized {S}ard's theorems},
	url = {https://doi.org/10.1080/03605300701382522},
	volume = {33},
	year = {2008}}

@article{Stromberg2013,
	author = {Str{\"o}mberg, Thomas},
	fjournal = {Nonlinear Analysis. Theory, Methods \& Applications. An International Multidisciplinary Journal},
	issn = {0362-546X},
	journal = {Nonlinear Anal.},
	mrclass = {35F21 (35A20 35D40)},
	mrnumber = {3040351},
	mrreviewer = {Fabiana Leoni},
	pages = {93--109},
	title = {Propagation of singularities along broken characteristics},
	doi = {10.1016/j.na.2013.02.024},
	url = {https://doi.org/10.1016/j.na.2013.02.024},
	volume = {85},
	year = {2013}}

@article{Yu2006,
	author = {Yu, Yifeng},
	fjournal = {Annali della Scuola Normale Superiore di Pisa. Classe di Scienze. Serie V},
	issn = {0391-173X},
	journal = {Ann. Sc. Norm. Super. Pisa Cl. Sci. (5)},
	mrclass = {35F20 (35A21)},
	mrnumber = {2297718},
	number = {4},
	pages = {439--444},
	title = {A simple proof of the propagation of singularities for solutions of {H}amilton-{J}acobi equations},
	volume = {5},
	year = {2006}}

@article{Miura_Tanaka2024,
	author = {Miura, Tatsuya and Tanaka, Minoru},
	title = {Delta-convex structure of the singular set of distance functions},
	journal = {Comm. Pure Appl. Math.},
	volume = {77},
	number = {9},
	pages = {3631--3669},
	year = {2024},
	doi = {10.1002/cpa.22195},
}

@article{Bianchini_Tonon2012,
	author = {Bianchini, Stefano and Tonon, Daniela},
	title = {{SBV} regularity for {H}amilton--{J}acobi equations with {H}amiltonian depending on $(t,x)$},
	journal = {SIAM J. Math. Anal.},
	volume = {44},
	number = {3},
	pages = {2179--2203},
	year = {2012},
	doi = {10.1137/110827272},
}

@book{Klingenberg_book1982,
	author = {Klingenberg, Wilhelm},
	title = {Riemannian Geometry},
	series = {De Gruyter Studies in Mathematics},
	volume = {1},
	publisher = {Walter de Gruyter \& Co., Berlin},
	year = {1982},
	note = {Second edition, 1995}
}

@book{Sakai_book1996,
	author = {Sakai, Takashi},
	title = {Riemannian Geometry},
	series = {Translations of Mathematical Monographs},
	volume = {149},
	publisher = {American Mathematical Society, Providence, RI},
	year = {1996},
	note = {Translated from the 1992 Japanese original}
}

@article{Cheng_Wei2026,
	author = {Cheng, Wei and Wei, Wenxue},
	title = {A geometric approach to the {M}ather quotient problem},
	journal = {Sci. China Math.},
	fjournal = {Science China Mathematics},
	issn = {1674-7283},
	volume = {69},
	number = {4},
	pages = {943--960},
	year = {2026}
}

@unpublished{Cheng_Hong2026,
	author = {Cheng, Wei and Hong, Jiahui},
	title = {Recent progress in generalized Hamiltonian gradient flow: singularities},
	note = {preprint, arXiv:2605.04658},
	year = {2026}
}

@article{Albano_Cannarsa_Cheng_Mendico2026,
	author = {Albano, Paolo and Cannarsa, Piermarco and Cheng, Wei and Mendico, Cristian},
	title = {Long-time behavior of generalized gradient flows of solutions to {H}amilton--{J}acobi equations},
	journal = {Commun. Contemp. Math.},
	fjournal = {Communications in Contemporary Mathematics},
	year = {2026},
	note = {online}
}

@article{Albano_Cannarsa_Sinestrari2020,
	author = {Albano, Paolo and Cannarsa, Piermarco and Sinestrari, Carlo},
	title = {Generation of singularities from the initial datum for {Hamilton--Jacobi} equations},
	journal = {J. Differential Equations},
	volume = {268},
	number = {4},
	pages = {1412--1426},
	doi = {10.1016/j.jde.2019.08.051},
	year = {2020}}
\end{document}